\definecolor{blue-grey}{HTML}{4A90E2}
\crefname{page}{page}{pages}
\newtheorem{mythm}{Theorem}[section]
\newtheorem{mycor}[mythm]{Corollary}
\newtheorem{mylemma}[mythm]{Lemma}
\newtheorem{mythmintro}{Theorem}
\newtheorem{myquest}{Question}
\newtheorem{myptn}[mythm]{Proposition}
\theoremstyle{definition}
\newtheorem{mydef}[mythm]{Definition}
\newtheorem{mydefintro}{Definition}
\newtheorem{myexample}[mythm]{Example}
\newtheorem{myremark}[mythm]{Remark}
\DeclareMathOperator{\Sym}{\mathbf{Sym}}
\DeclareMathOperator{\CEff}{\mathrm{Eff}} %Effective cone (over R)
\DeclareMathOperator{\Hom}{\mathrm{Hom}} % Set of homomorphisms  / or scheme repres. functor
\DeclareMathOperator{\HHom}{\mathbf{Hom}} % Functor of homomorphisms 
\DeclareMathOperator{\Spec}{\mathrm{Spec}} % Spectrum of a ring 
\DeclareMathOperator{\Pic}{\mathrm{Pic}} % Picard group
\DeclareMathOperator{\Div}{\mathrm{Div}} %Relative EffDivFunctor
\DeclareMathOperator{\PPic}{\mathbf{Pic}} % Relative Picard functor
\DeclareMathOperator{\DDiv}{\mathbf{Div}} %Relative EffDivFunctor
\DeclareMathOperator{\NS}{\mathrm{NS}} % Néron-Severi group 
\DeclareMathOperator{\Cox}{\mathrm{Cox}} % Cox ring  
\DeclareMathOperator{\LinSys}{\mathrm{LinSys}} % Hilbert scheme
\DeclareMathOperator{\pr}{\mathrm{pr}} % Projection 
\DeclareMathOperator{\Supp}{\mathrm{Supp}} % Support 
\DeclareMathOperator{\res}{\mathrm{res}} % Residue 
\DeclareMathOperator{\rk}{\mathrm{rk}}  % Rank 
\DeclareMathOperator{\rg}{\mathrm{rk}}  % Rank (french command)
\DeclareMathOperator{\ord}{\mathrm{ord}} % Order
\newcommand{\KVar}[1]{K_0  \mathbf{Var}_{#1} } %Grothendieck ring of varieties
\newcommand{\KVarmon}[1]{K_0  \mathbf{Var}^+_{#1} } %Grothendieck ring of varieties
\newcommand{\KVarpar}[1]{K_0  ( \mathbf{Var}_{#1} ) } %Grothendieck ring of varieties with parentheses  
\DeclareMathOperator{\PL}{\mathrm{PL}} % Piecewise linear functions 
\DeclareMathOperator{\degg}{\mathbf{deg}}  
\newcommand{\dd}{{\bm{d}}}  
\newcommand{\ee}{{\bm{e}}}  
\newcommand{\mm}{{\bm{m}}}  
\newcommand{\nn}{{\bm{n}}}  
\newcommand{\uu}{{\bm{u}}}   
\newcommand{\CC}{\mathbf{C}}
\newcommand{\PP}{\mathbf{P}}
\newcommand{\RR}{\mathbf{R}}
\newcommand{\ZZ}{\mathbf{Z}}
\newcommand{\NN}{\mathbf{N}}
\newcommand{\TT}{{\mathbf{T}}}
\newcommand{\LL}{\mathbf{L}}
\newcommand{\GG}{\mathbf{G}}
\newcommand{\QQ}{\mathbf{Q}}
\newcommand{\AAA}{\mathscr{A}}
\newcommand{\CCC}{\mathscr{C}}
\newcommand{\DDD}{\mathscr{D}}
\newcommand{\FFF}{\mathscr{F}}
\newcommand{\LLL}{\mathscr{L}}
\newcommand{\MMM}{\mathscr{M}}
\newcommand{\OOO}{\mathscr{O}}
\newcommand{\PPP}{\mathscr{P}}
\newcommand{\VVV}{\mathscr{V}}
\newcommand{\XXX}{\mathscr{X}}
\newcommand{\Kapr}{\mathrm{Kapr}}
\newcommand{\mdeg}{\delta}
\newcommand{\scdot}{\,\cdot \,}
\newcommand{\kb}{k} %Base field 
\newcommand{\kp}{{\kappa ( p )}}
\newcommand{\AJ}{\mathrm{AJ}}
\begin{document}

\title[Motivic countings of curves via universal torsors]{Motivic counting of rational curves \\ with tangency conditions \\ via universal torsors \\}

%%%%%%%%%%%%%%%%%%%%%%%%%%%%%%%%%%%%%%%%%%%%%%%%%%%%%%
%%%%%%%%%%%%%%%%%%%%% ABSTRACT %%%%%%%%%%%%%%%%%%%%%%%
%%%%%%%%%%%%%%%%%%%%%%%%%%%%%%%%%%%%%%%%%%%%%%%%%%%%%%

\begin{abstract}
 Using the formalism of Cox rings and universal torsors, 
 we prove a decomposition of the 
 Grothendieck motive
 of the 
 moduli space of morphisms 
 from an arbitrary smooth projective curve
 to a Mori Dream Space (MDS).

For the simplest cases of MDS, that of toric varieties, 
we use this decomposition
to prove 
an instance of the motivic Batyrev--Manin--Peyre principle for curves satisfying
tangency conditions with respect to the boundary divisors,
often called
Campana curves. 
\end{abstract}

%%%%%%%%%%%%%%%%%%%%%%%%%%%%%%%%%%%%%%%%%%%%%%%%%%%%%%
%%%%%%%%%%%%%%%%%%%%%% AUTHOR %%%%%%%%%%%%%%%%%%%%%%%%
%%%%%%%%%%%%%%%%%%%%%%%%%%%%%%%%%%%%%%%%%%%%%%%%%%%%%%

\author[L. Faisant]{Loïs Faisant}
\address{IST Austria, Am Campus 1, 3400 Klosterneuburg, Austria}
\email{lois.faisant@ista.ac.at or @m4x.org}
\date{\today}
%\date{\textbf{Preliminary version of \today. Please do not distribute without permission :)}}
\thanks{The author acknowledges funding from the European Union’s Horizon 2020 research and innovation programme under the Marie Skłodowska-Curie grant agreement No 101034413.}

\makeatletter
\@namedef{subjclassname@2020}{%
  \textup{2020} Mathematics Subject Classification}
\makeatother

\subjclass[2020]{14H10, 14E18, 11G50, 14G40, 14J45}
% 14E18   	Arcs and motivic integration
% 14H10 	Families, moduli of curves (algebraic)
% 11G50   	Heights 		[See also 14G40, 37P30]
% 14G40 	Arithmetic varieties and schemes; Arakelov theory; heights 				[See also 11G50, 37P30]
% 14J45   	Fano varieties 

% 14E30   	Minimal model program (related) 

\keywords{}

%%%%%%%%%%%%%%%%%%%%%%%%%%%%%%%%%%%%%%%%%%%%%%%%%%%%%%
%%%%%%%%%%%%%%%%%%%%%% TITLE %%%%%%%%%%%%%%%%%%%%%%%%%
%%%%%%%%%%%%%%%%%%%%%%%%%%%%%%%%%%%%%%%%%%%%%%%%%%%%%%

\maketitle

%%%%%%%%%%%%%%%%%%%%%%%%%%%%%%%%%%%%%%%%%%%%%%%%
%%%%%%%%%%%%%%%%%%%% TOC %%%%%%%%%%%%%%%%%%%%%%%
%%%%%%%%%%%%%%%%%%%%%%%%%%%%%%%%%%%%%%%%%%%%%%%%

\setcounter{tocdepth}{1}
\tableofcontents

%%%%%%%%%%%%%%%%%%%%%%%%%%%%%%%%%%%%%%%%%%%%%%%%
%%%%%%%%%%%%%%%%%%%% INTRO %%%%%%%%%%%%%%%%%%%%%
%%%%%%%%%%%%%%%%%%%%%%%%%%%%%%%%%%%%%%%%%%%%%%%%

\section*{Introduction}

% TODO [X] Rectifier : ce n'est pas le premier article du genre 
In this article, we aim
to illustrate how one can use the formalism of Cox rings and universal torsors 
to study the moduli space 
\[
\HHom_\kb ( \CCC , X )
\]
parametrising morphisms from an arbitrary smooth projective curve $\CCC$ 
to a Mori Dream Space $X$,
both varieties $X$ and $\CCC$ 
being defined above an
 arbitrary base field
$\kb$. 
More precisely, we would like to study its Grothendieck motive
and the approach we adopt is
guided by a so-called
motivic Batyrev-Manin-Peyre principle 
\cite[Question 2]{faisant2023motivic-distribution}.

\medskip

The present paper contains two main results,
\cref{thm-intro-parametrisation} \cpageref{thm-intro-parametrisation} 
and 
\cref{thm-intro-campana} \cpageref{thm-intro-campana}.
The first one
is an explicit algebraic parametrisation of morphisms $\CCC \to X$
with image 
intersecting 
the complement $U\subset X$
of the zero locus of a given set of generators of the Cox ring of $X$.
The second main result is an application
of this parametrisation
to the case where $X$ 
is a smooth projective split toric variety over $\kb$ 
and 
one restricts
to morphisms $\mathscr C \to X$
having their image
satisfying certain tangency conditions 
with respect to the boundary divisors of the toric variety.
Such morphisms are often called \emph{Campana curves}
and we prove 
a variant 
of the motivic Batyrev-Manin-Peyre principle 
for such morphisms. 

\medskip

In this introduction we quickly present our two results,
providing some context and illustrating 
how they are related to the domain's state-of-the art. 
The organisation of the paper is given on \cpageref{introduction-orga-paper}.

\subsection*{Mori Dream Spaces}
Let $X$ be a projective smooth and geometrically integral variety above a field $\kb$.
In this article we assume moreover that 
$\Pic ( X ) $ is a split free $\ZZ$-module,
that is to say that the absolute Galois action on it is trivial.
The so-called \emph{Mori Dream Spaces} \cite{HK2000mori}
are exactly the $X$'s 
whose Cox Ring 
\[
\bigoplus_{[L] \in \Pic ( X )} H^0 ( X , L ) 
\]
is finitely generated. Note that Fano varieties are known to be Mori Dream Spaces by \cite{BCHM10}. 
With the above definition of the Cox ring, the ring structure depends on the choice of a basis of $\Pic ( X ) $. 
We assume this choice to be made once and for all. 

\medskip

So, in this introduction, 
we assume that $X$ is 
a Mori Dream Space.
Its effective cone is rational polyhedral 
and we 
fix once and for all 
a finite family $( \mathcal D_i )_{i\in \mathfrak I}$
of effective divisors on $X$ whose classes generate the effective cone of $X$. 
%\begin{center}
%\begin{minipage}{\linewidth}% to keep image and caption on one page
%\makebox[\linewidth]{%        to center the image
%  \includegraphics{Drawing-Divisors.pdf}
%  }
%\end{minipage}
%\end{center}
Moreover, let $U$ be the open subset of $X$ 
obtained by deleting these effective divisors. 
There is a scheme 
\[
\HHom_\kb ( \CCC , X )_U
\]
parametrising morphisms $\CCC \to X$ 
whose image intersects the open subset $U$
and that is the one we are going to work with.

\subsection*{Motivic Batyrev-Manin-Peyre principle}
Our work 
is motivated by the study of the precise motivic distribution of curves on Fano varieties,
or varieties that are not far from being Fano.
As in our previous works \cite{faisant2023geometric-BMP-Ga,faisant2023motivic-distribution},
our approach is very much inspired by Manin's program \cite{franke1989rational,peyre1995hauteurs}
and the classical dictionary between number fields and function fields, which allows us to see $\ZZ$-points of arithmetic schemes as morphisms from a ``curve'' and morphisms from a $\kb$-curve to a $\kb$-variety as a ``point''.

\medskip 

We work within the Grothendieck ring
of varieties over $\kb$ (and later over an arbitrary scheme).
Such a ring $\KVar{\kb}$
is defined via generators and relations. Generators are isomorphism classes $[Y]$ of $\kb$-varieties 
and relations are 
\[
[ Y ] - [ Z ] - [ Y - Z ]
\] 
whenever $Z$ is a Zariski-closed subset of a $\kb$-variety $Y$. 
The multiplication law is given by taking Cartesian products of $\kb$-varieties. 
In this ring,
the class $\LL_\kb$ of the affine line $\mathbf A^1_\kb$
plays a particular role
and 
the ring of motivic integration $\MMM_\kb$
is obtained from $\KVar{\kb}$ by localising at $\LL_\kb$. 
An element of $\KVar{\kb}$ is called a \emph{class} or sometimes a \emph{Grothendieck motive}. 

\medskip 

Now, if we go back to our specific setting, 
it turns out that
for every $\dd \in \NN^\mathfrak I$,
morphisms $f : \CCC \to X$
having image 
intersecting $U$ and 
such that 
\[
 ( \deg ( f^* \mathcal D_i ) )_{i\in \mathfrak I} = \dd 
 \]
are parametrised by a $\kb$-scheme of finite type 
\[
\HHom^\dd_\kb  ( \CCC , X ) _U
\]
of dimension bounded below by 
\[
| \dd | + ( 1 - g ( \CCC ) ) \dim ( X )
\]
where $| \dd | = \sum_{i\in \mathfrak I} d_i$.
In particular,
it makes sense to consider its class in $\KVar{\kb}$ and $\MMM_\kb$.
Then, the motivic Batyrev-Manin-Peyre principle 
from \cite[Question 2]{faisant2023motivic-distribution}
predicts the behaviour of the normalised class
\[
\frac{
\left [
\HHom^\dd_\kb  ( \CCC , X ) _U 
\right ]}
{
\LL_\kb^{| \dd | + ( 1 - g ( \CCC ) ) \dim ( X ) }
} 
\in \MMM_\kb
\]
as ``$\dd \to \infty$'', meaning that 
all the coordinates of $\dd \in \NN^\mathfrak I$ tend to infinity.
It is expected to converge to a product of the form
\[
\left ( \frac{ \left [\PPic^0_{\CCC / \kb}\right ] \LL_\kb^{1-g ( \CCC ) } }{ \LL_\kb - 1 } \right )^{\rk ( \Pic ( V ))} 
	\prod_{p\in \CCC} 
	\left ( 1 - \LL_p^{-1} \right )^{\rk ( \Pic ( V ))} 
	\frac{[X]}{\LL_{p}^{\dim ( X )}} 
\]
where the second factor is a motivic Euler product in the sense of \cite{bilu2023MAMS}. 
Of course, we recall later in this paper the meaning of this notation, 
which is, 
as its name suggests, a motivic analogue of the classical notion of Euler products number theorists are familiar with. 

\medskip 

Therefore, if one wishes to prove that this expectation 
holds for a certain family of varieties, 
it is quite natural to start with trying to find an explicit description of the scheme $\HHom^\dd_\kb  ( \CCC , X ) _U$.
For example,
when $X$ is the projective plane, 
the moduli space of morphism $\PP^1_\kb \to \PP^2_\kb$
having image 
intersecting the torus $\{ x_0 x_1 x_2 \neq 0\}$  
is given, up to a multiplicative constant, by the space of tuples of three homogeneous polynomials, all of them being non-zero, of the same degree and mutually coprime. 
The parametrisation used in this paper broadly generalises this description. 

\subsection*{Lifting to the universal torsor}

Whenever a morphism $\CCC \to X$ is given, trying to lift it
to a space above $X$ whose geometry is expected to be simpler to understand seems to be a natural thing to do. 
A good candidate for such a nicer space is the universal torsor $\mathcal T_X$,
which is a torsor under the torus associated to $\Pic ( X )$.
\[
\begin{tikzcd}
& \mathcal T_X \dar["{/T_{\Pic ( X )}}"] \\
\CCC \rar \urar[dashed,"?"] & X 
\end{tikzcd}
\]
For each morphism $\CCC \to X$, it is possible to pull-back $\mathcal T_X$, and in case this morphism admits a lift, this lift induces a unique section of the pull-back.
\[
\begin{tikzcd}
f^* \mathcal T_X \rar \dar \arrow[dr, phantom, "\ulcorner", very near start] & \mathcal T_X \dar["{/T_{\Pic ( X )}}"] \\
\CCC \rar["f"]\arrow[u,shift right=1.5ex,dashrightarrow,"\exists !"']  \urar & X 
\end{tikzcd}
\]
In other words, our goal could be rephrased as follows: we would like to be able to parametrise every such lift
of every such torsor above $\CCC$. To do so, there exist some natural objects.

\subsection*{Ingredients of our parametrisation}

Our explicit parametrisation of morphisms 
\[{\CCC \to X}\]
from an arbitrary smooth projective $\kb$-curve $\CCC$
to a Mori Dream Space $X$
involves the following objects:
\begin{itemize}
	\item a presentation of the Cox ring of $X$ in terms of generators and relations, that is to say an isomorphism 
	\[
	\Cox ( X ) \simeq \kb [ ( s_i )_{i\in \mathfrak I } ] / \mathcal I_X
	\]
	where the $s_i$, $i\in \mathfrak I$, are sections defining the $( \mathcal D_i )_{i\in \mathfrak I}$ that are fixed once and for all;
	\item the Picard scheme $\PPic_{\CCC / \kb }$ of the smooth projective and geometrically irreducible $\kb$-curve $\CCC$, which exists under our assumptions;
	\item a universal sheaf $\mathscr P$ on  $\CCC \times \PPic_{\CCC / \kb }$, sometimes also called \emph{Poincaré sheaf}, which is an invertible sheaf satisfying a certain universal property;
	\item the scheme $\DDiv_{\CCC /\kb}$ parametrising effective divisors on $\CCC$ together with the Abel-Jacobi morphism
	\[
	\DDiv_{\CCC /\kb} \longrightarrow \PPic_{\CCC  / \kb} 
	\]
	sending an effective divisors $D$ to the class of $\mathscr O_\CCC ( D )$,
	which can be realised, as a $\PPic_{\CCC  / \kb} $-scheme\footnote{Up to working with a flattening stratification of a finite number of components of $\PPic_{\CCC  / \kb} $.}, as $\mathbf P ( \mathscr Q )$ for a certain coherent sheaf $\mathscr Q$ associated to $\mathscr P$,
	which means in particular that $\DDiv_{\CCC /\kb}$ admits a bundle of lines $\mathcal O_{\DDiv_{\CCC /\kb}} ( -1 )$,
	still relatively to $\PPic_{\CCC  / \kb} $;
	\item the space $	\HHom_\kb ( \CCC , X )_U 
$
can be seen as a scheme over $ ( \DDiv_{\CCC /\kb})^\mathfrak I$
via the morphism sending a curve $f : \CCC \to X$
to the $\mathfrak I$-tuple of effective divisors on $\CCC$ obtained by pulling back the $\mathcal D_i$'s via $f$.
\end{itemize}

\smallskip 

\begin{mythmintro}[\cref{thm-parametrisation-finale} \cpageref{thm-parametrisation-finale}]
\label{thm-intro-parametrisation}
	As a 
	scheme over the scheme $(\DDiv_{\CCC /\kb})^\mathfrak I$ parametrising  $\mathfrak I$-tuples of effective divisors on $\CCC$,
	the parameter space 
	\[
	\HHom_\kb ( \CCC , X )_U 
	\]
	is isomorphic to  
	an explicit locally closed subset 
	in  
	\[
	\underbrace{\mathcal O_{\DDiv_{\CCC /\kb}} ( -1 )^\times 
	\times_{\DDiv_{\CCC /\kb}} ... \times_{\DDiv_{\CCC /\kb}} \mathcal O_{\DDiv_{\CCC /\kb}} ( -1 )^\times}_{\mathfrak I \text{ times}}
	\]
	given by the ideal $\mathcal I_X$ 
	together with the coprimality conditions
	coming from the mutual non-intersections of the $\mathcal D_i$ for $i\in \mathfrak I$. 
\end{mythmintro}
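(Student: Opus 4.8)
The plan is to run the Cox ring / universal torsor dictionary functorially in an auxiliary base scheme $S$. Fix the presentation attached to the chosen generators: the Néron--Severi torus $G=\Hom_{\ZZ}(\Pic(X),\GG_m)$ acts on $\mathbf A^{\mathfrak I}_\kb$ with weights $([\mathcal D_i])_{i\in\mathfrak I}$, one has $\Cox(X)=\kb[(s_i)]/\mathcal I_X$ with $\mathcal I_X$ a $\Pic(X)$-homogeneous ideal, the universal torsor is the quasi-affine scheme $\mathcal T_X=V(\mathcal I_X)\setminus V(\mathcal I_{\mathrm{irr}})\subseteq\mathbf A^{\mathfrak I}_\kb$ with $\mathcal I_{\mathrm{irr}}$ generated by the monomials $\prod_{i\in J}s_i$ over the subsets $J\subseteq\mathfrak I$ such that $\bigcap_{i\notin J}\mathcal D_i=\varnothing$, and $\mathcal T_X\to X$ is a $G$-torsor realising $X$ as a geometric quotient. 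A morphism $F\colon\CCC_S\to X$ whose image meets $U$ fibrewise amounts, after pulling back this torsor and composing with the inclusion $\mathcal T_X\hookrightarrow\mathbf A^{\mathfrak I}_\kb$, to a $G$-torsor $\mathcal P$ on $\CCC_S$ --- equivalently a homomorphism $\Phi\colon\Pic(X)\to\Pic(\CCC_S)$ --- together with, for each $i$, a section $t_i=F^*s_i$ of the invertible sheaf $\mathcal L_i:=\Phi([\mathcal D_i])$, subject to: (a) the relations defining $\mathcal I_X$, which make sense because a $\Pic(X)$-homogeneous generator of degree $\ell$ evaluates at $(t_i)$ to a section of the single sheaf $\Phi(\ell)$; (b) the irrelevance condition that $(t_i)$ avoid $V(\mathcal I_{\mathrm{irr}})$ on every geometric fibre, which by the shape of $\mathcal I_{\mathrm{irr}}$ reads $\bigcap_{i\in J}\Supp\ddiv(t_i)=\varnothing$ whenever $\bigcap_{i\in J}\mathcal D_i=\varnothing$, that is exactly the stated coprimality conditions; and (c), since $\CCC$ is irreducible so that $F$ meets $U$ precisely when it factors through no $\mathcal D_i$, the condition that each $t_i$ restrict to a nonzero section on every geometric fibre of $\CCC_S\to S$.

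Next I would match these data with the objects appearing in the statement. By (c) each $t_i$ has an associated relative effective Cartier divisor $\ddiv(t_i)$, that is, an $S$-point of $\DDiv_{\CCC/\kb}$; moreover, under the realisation $\DDiv_{\CCC/\kb}\simeq\PP(\mathscr Q)$ over (a flattening stratification of finitely many components of) $\PPic_{\CCC/\kb}$ recalled above, the line bundle $\mathcal O_{\DDiv_{\CCC/\kb}}(-1)$ has fibre over $[D]$ the one-dimensional space of global sections of $\mathscr O_\CCC(D)$ cutting out $D$, so its complement of the zero section $\mathcal O_{\DDiv_{\CCC/\kb}}(-1)^\times$ represents the functor of pairs (invertible sheaf on $\CCC_S$, section vanishing on no fibre). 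Hence $(\mathcal L_i,t_i)$ is precisely an $S$-point of $\mathcal O_{\DDiv_{\CCC/\kb}}(-1)^\times$, and $F\mapsto(F^*s_i)_{i\in\mathfrak I}$ --- which involves no choices, as $F^*\mathcal O_X(\mathcal D_i)$ and $F^*s_i$ are canonical --- defines a morphism from $\HHom_\kb(\CCC,X)_U$ to the product of $\mathfrak I$ copies of $\mathcal O_{\DDiv_{\CCC/\kb}}(-1)^\times$, compatible over $(\DDiv_{\CCC/\kb})^{\mathfrak I}$ with the morphism recording the divisors $F^*\mathcal D_i$, and landing in the subset cut out by the closed conditions (a) and the open conditions (b) and (c). Conversely, from a tuple $(t_i)$ in that subset one reassembles the $\mathcal L_i=\mathscr O_{\CCC_S}(\ddiv t_i)$ into a $G$-torsor $\mathcal P$ on $\CCC_S$ carrying a $G$-equivariant map to $\mathcal T_X$, hence a morphism $\CCC_S\to X$; I would then check directly that the two constructions are mutually inverse and functorial in $S$, and that each leaves $\ddiv(t_i)$ unchanged, which yields the asserted isomorphism over $(\DDiv_{\CCC/\kb})^{\mathfrak I}$, local closedness being immediate since (a) is closed and (b), (c) are open.

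The hard part is this reconstruction: one must check that a tuple $(t_i)$ satisfying (a) and (b) genuinely descends to a morphism to $X$, and not merely to the quotient stack $[\mathbf A^{\mathfrak I}/G]$, which requires the sheaves $\mathcal L_i=\mathscr O_{\CCC_S}(\ddiv t_i)$ to glue into a $G$-torsor --- equivalently, the assignment $[\mathcal D_i]\mapsto[\mathcal L_i]$ to extend to a homomorphism $\Pic(X)\to\Pic(\CCC_S)$, that is, to kill the relation lattice $K=\ker(\ZZ^{\mathfrak I}\twoheadrightarrow\Pic(X))$. This compatibility is not formal --- it is not implied by (a) and (b) alone --- so it must be incorporated into the parametrisation: concretely, it amounts to restricting the base $(\DDiv_{\CCC/\kb})^{\mathfrak I}$ to the locus where the Abel--Jacobi classes of the $\ddiv(t_i)$ satisfy the relations of $K$, a closed condition pulled back along $\DDiv_{\CCC/\kb}\to\PPic_{\CCC/\kb}$, and it is over that locus --- with the $t_i$ then ranging over the residual datum of scalars further constrained by $\mathcal I_X$ --- that the identification of the statement holds; one has to verify that spelling this out reproduces precisely the asserted locally closed subset. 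The remaining ingredients I expect to be routine but lengthy: making the universal torsor correspondence precise over an arbitrary base scheme (fppf descent, base change for $\PPic_{\CCC/\kb}$ and its Poincaré sheaf $\mathscr P$, the flattening stratification of the footnote, relative effective Cartier divisors), confirming that $\mathcal O_{\DDiv_{\CCC/\kb}}(-1)^\times$ represents the section functor used above, and deriving the combinatorial form of the coprimality conditions in (b) from the explicit generators of $\mathcal I_{\mathrm{irr}}$.
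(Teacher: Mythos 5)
Your overall route is the same as the paper's: you run the Cox-ring/universal-torsor description of the functor of points of $X$ over $\CCC\times_\kb S$, sort the constraints into the relations from $\mathcal I_X$, the coprimality conditions from the irrelevant ideal, and the linear-equivalence condition coming from the relation lattice $N_X=\ker(\bigoplus_i\ZZ\mathcal D_i\to\Pic(X))$, and you correctly single out the last of these as the non-formal compatibility that must be imposed on the base via the Abel--Jacobi map; this is exactly the paper's locus $(\DDiv_{\CCC/\kb})^{\mathfrak I}_{X,*}$, and the paper's proof proceeds through the same functor-of-points argument (phrased in terms of $X$-collections).

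There is, however, one genuine gap: the missing quotient by the Néron--Severi torus $T_{\Pic(X)}$. You assert that $\mathcal O_{\DDiv_{\CCC/\kb}}(-1)^\times$ represents pairs (invertible sheaf on $\CCC_S$, fibrewise nonzero section) and that $F\mapsto(F^*s_i)_{i\in\mathfrak I}$ therefore ``involves no choices''. In fact an $S$-point of $\mathcal O_{\DDiv_{\CCC/\kb}}(-1)^\times$ is a \emph{Poincaré-rigidified} pair: a map $h\colon S\to\PPic_{\CCC/\kb}$ together with a fibrewise nonzero section of $(\mathrm{id}\times h)^*\mathscr P$ (the functor of bare pairs is not representable by this scheme --- up to isomorphism a bare pair is just its divisor of zeros, i.e.\ a point of $\DDiv_{\CCC/\kb}$). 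To turn the canonical pair $(F^*\mathcal O_X(\mathcal D_i),F^*s_i)$ into such a point one must choose an isomorphism of $F^*\mathcal O_X(\mathcal D_i)$ with the Poincaré representative, unique only up to a unit; after accounting for isomorphisms of the whole collection, the lift of $F$ to your locally closed subset $\widehat X\subset\prod_i\mathcal O_{\DDiv_{\CCC/\kb}}(-1)^\times$ is well defined only up to the free action of $T_{\Pic(X)}$ acting through the weights $[\mathcal D_i]$ --- which is just the statement that $\mathcal T_X\to X$ is a torsor, so lifts of $F$ to $\mathcal T_X$ form a torsor and are never unique. Consequently ``the two constructions are mutually inverse'' fails: starting from a tuple $(t_i)$, passing to the morphism and back only recovers the $T_{\Pic(X)}$-orbit of $(t_i)$. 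The statement actually proved as \cref{thm-parametrisation-finale} identifies $\HHom_\kb(\CCC,X)_U$ piecewise with the quotient $\widetilde X=\widehat X/T_{\Pic(X)}$, not with $\widehat X$ itself. The discrepancy is a factor $[T_{\Pic(X)}]=(\LL_\kb-1)^{\rk\Pic(X)}$ on Grothendieck classes: already for $X=\CCC=\PP^1_\kb$ your subset is the space of pairs of coprime nonzero binary forms of equal degree, which is a $\GG_m$-torsor over, and not isomorphic to, $\HHom^d_\kb(\PP^1,\PP^1)_U$; this is precisely the factor $\LL_\kb-1$ the paper divides out when computing $\mathcal Z_{\PP^1_\kb}$, so omitting the quotient would corrupt all the subsequent asymptotics.
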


When 
$X$ is a smooth projective split toric variety over $\kb$, 
we can directly use 
\cref{thm-intro-parametrisation} to 
prove the motivic Batyrev-Manin-Peyre principle for $\CCC \to X$,
generalising \cite[Theorem 5.4]{faisant2023motivic-distribution} (where $\mathscr C = \PP^1_\kb$)
to any $\CCC$,
see  
\cref{thm-BMP-C-toric} \cpageref{thm-BMP-C-toric}.
The question of equidistribution of curves
\cite[Theorem 5.6]{faisant2023motivic-distribution}
could also be addressed for a general $\CCC$ using \cref{thm-intro-parametrisation}, since most of the arguments from the proof in \cite[\S 5]{faisant2023motivic-distribution}
does not depend on the assumption $\CCC = \PP^1_\kb$. 

\subsection*{Campana curves on toric varieties}
Simply via pull-back, we are able
to apply our approach to a submoduli space of $\HHom_\kb ( \CCC , X )$ parametrising 
morphisms fulfilling prescribed tangency conditions with respect to the boundary divisors:
we give ourselves 
a tuple $\mm = ( m_i )_{i\in \mathfrak I} \in \NN^\mathfrak I$
and restrict to morphisms $\CCC \to X$ 
intersecting the boundary divisor $\mathcal D_i$
with local multiplicity at least $m_i$,
for every $i\in \mathfrak I$.
\begin{figure}[h]
\makebox[\linewidth]{%        to center the image
  \includegraphics{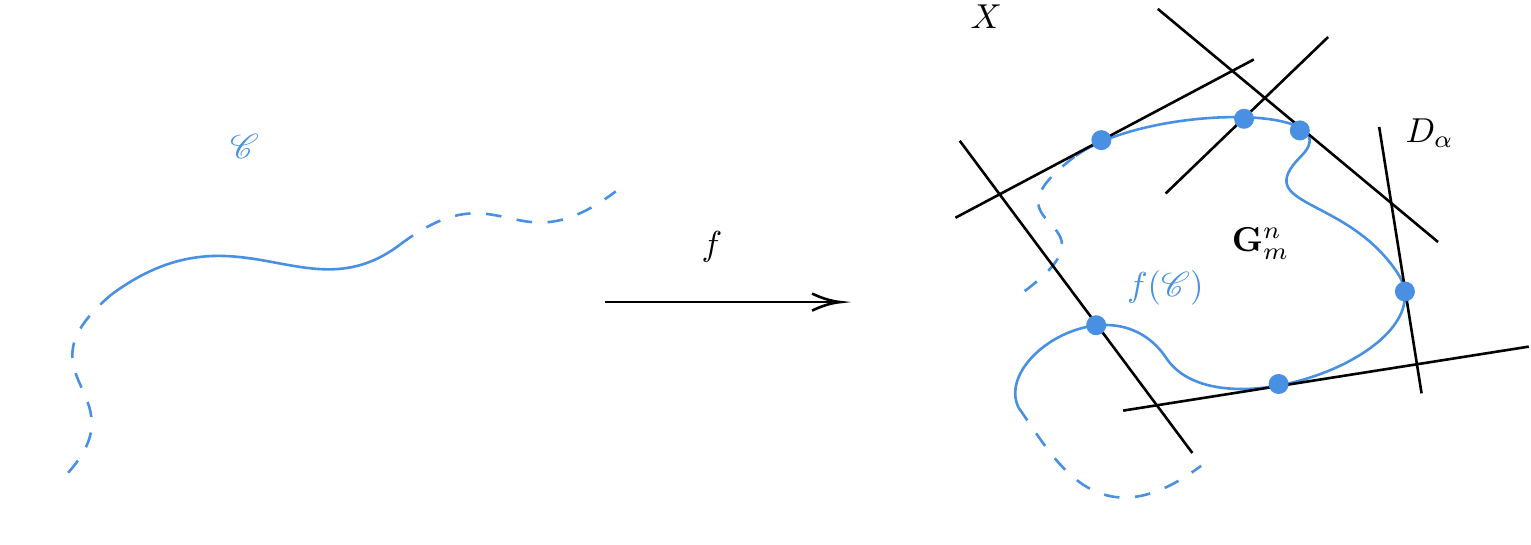}
  }
 \caption{An example of a Campana curve on a split projective toric variety. Here, the image of $\CCC$ by $f$ is tangent to four of the prime boundary divisors.}
 \label{figure-campana-toriques}
\end{figure}
Such morphisms are parametrised by a submoduli space 
\[
\HHom_\kb ( \CCC , ( X , \mathcal D_\epsilon ) ) 
\]
where $\mathcal D_{\bm \epsilon} $ is the $\QQ$-divisor
\[
\sum_{i\in \mathfrak I} \left ( 1 - \frac{1}{m_i} \right ) \mathcal D_i
\]
and $\bm \epsilon = ( 1 - 1 / m_i )_{i\in \mathfrak I}$.

Toric varieties are exactly the Mori Dream Spaces 
whose Cox ring has no relation at all \cite{cox1995homogeneous}.
Hence, they are the simplest example for which one can apply \cref{thm-intro-parametrisation}.
So let us 
assume that 
$X=X_\Sigma$ is a smooth split projective toric $\kb$-variety 
defined
by a complete regular fan $\Sigma$, 
the index set 
$\mathfrak I$ being in that case the set $\Sigma ( 1 ) $ of rays of $\Sigma$,
and $\mathcal D_\epsilon$ is the $\mathbf Q$-divisor
\[
\mathcal D_\epsilon = \sum_{\alpha \in \Sigma ( 1 )} \left ( 1 -  \frac{1}{m_\alpha} \right ) \mathcal D_\alpha 
\]
for a given tuple $(m_\alpha)_{\alpha \in \Sigma ( 1 )}$ of positive integers.

\smallskip

The second main result of this paper is the following 
instance of the motivic Batyrev-Manin-Peyre principle for Campana curves. 
It is stated in the dimensional completion of a finite extension
$\MMM_{\kb,r} = \MMM_\kb [T] / ( \LL_\kb - T^r ) $ of $\MMM_\kb$, where $r$ is a sufficiently large positive integer.

\begin{mythmintro}[\cref{thm-Campana-final} \cpageref{thm-Campana-final}]
	\label{thm-intro-campana}
	Let $X_\Sigma$ be a smooth split projective toric variety 
	over $\kb$, with split open torus $U \simeq \GG_m^n$
    and boundary divisors $\mathcal D_\alpha$, $\alpha \in \Sigma ( 1 )$.
    Let $m_\alpha$ be a finite family of positive integers indexed by the rays $\alpha \in \Sigma ( 1 )$
    and 
    \[
\mathcal D_\epsilon = \sum_{\alpha \in \Sigma ( 1 )} 
\left ( 1 -  \frac{1}{m_\alpha} \right )
\mathcal D_\alpha 
\]
with $\bm \epsilon = ( 1 - 1/m_\alpha )_{\alpha \in \Sigma ( 1 )}$.
	Then 
	\begin{align*}
	& \left [
	\HHom_\kb^\dd ( \CCC , ( X_\Sigma , \mathcal D_\epsilon ) )_U
	\right ]
	\LL_{\kb}^{- \dd \scdot ( \mathbf 1 - \bm{\epsilon} ) - ( 1 - g ( \CCC ) ) \dim ( X_\Sigma )}\\
	& \qquad \longrightarrow 
	\left ( \frac{ \left [\PPic^0_{\CCC / \kb}\right ] \LL_\kb^{1-g ( \CCC ) } }{ \LL_\kb - 1 } \right )^{\rk ( \Pic ( V ))} 
	\prod_{p\in \CCC} 
	( 1 - \LL_p^{-1} )^{\rk ( \Pic ( V ))} 
	\int_{
	\substack{
	\LLL_\infty ( X_\Sigma ) \\ \ord_{\mathcal D_\alpha} \in \ZZ_{m_\alpha} \cup \{ 0 \}
	}
	} 
	\mathrm d \mu_{( X_\Sigma , D_{\bm \epsilon} )}
	\end{align*}
	as  
	$\dd \to \infty $.
\end{mythmintro}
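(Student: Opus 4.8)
The plan is to specialise \cref{thm-intro-parametrisation} to the toric situation and then carry out the motivic count explicitly, organising it as a motivic Euler product over the closed points of $\CCC$ in the sense of \cite{bilu2023MAMS}. \textbf{Step 1 (specialising the parametrisation).} A toric variety is exactly a Mori Dream Space whose Cox ring has no relations \cite{cox1995homogeneous}, so $\mathcal I_{X_\Sigma}$ is trivial and \cref{thm-intro-parametrisation} identifies $\HHom_\kb(\CCC,X_\Sigma)_U$, as a scheme over $(\DDiv_{\CCC/\kb})^{\Sigma(1)}$, with the open subscheme of the $\Sigma(1)$-fold fibre product of copies of $\mathcal O_{\DDiv_{\CCC/\kb}}(-1)^\times$ cut out purely by the coprimality conditions attached to the primitive collections of $\Sigma$. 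The Campana moduli space $\HHom_\kb(\CCC,(X_\Sigma,\mathcal D_\epsilon))_U$ is then obtained by base change along the inclusion into $(\DDiv_{\CCC/\kb})^{\Sigma(1)}$ of the sublocus whose $\alpha$-th coordinate is forced to lie in the constructible subset of effective divisors all of whose local multiplicities belong to $\{0\}\cup\ZZ_{\geq m_\alpha}$; fixing the multidegree $\dd$ replaces this base by $\prod_{\alpha\in\Sigma(1)}\DDiv^{d_\alpha,\,\geq m_\alpha}_{\CCC/\kb}$, the product of the Campana divisor spaces of the prescribed degrees.

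\textbf{Step 2 (the period factor and the normalisation).} Each $\mathcal O_{\DDiv_{\CCC/\kb}}(-1)^\times$ is a Zariski-locally trivial $\GG_m$-bundle, hence contributes a factor $\LL_\kb-1$ over its base; combining this with the Picard fibration $\DDiv_{\CCC/\kb}\to\PPic_{\CCC/\kb}$, whose fibres in large degree are projective spaces of the dimension predicted by Riemann--Roch, and with the relations $\sum_{\alpha}\langle m,\alpha\rangle d_\alpha=0$, $m\in M$, encoded in the fibre product, one extracts the ``period'' factor $\bigl([\PPic^0_{\CCC/\kb}]\LL_\kb^{1-g(\CCC)}/(\LL_\kb-1)\bigr)^{\rho}$, with $\rho=\rk\Pic(X_\Sigma)$, together with the power of $\LL_\kb$ matching the normalisation $\LL_\kb^{-\dd\scdot(\mathbf 1-\bm\epsilon)-(1-g(\CCC))\dim X_\Sigma}$. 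Observe that $\dd\scdot(\mathbf 1-\bm\epsilon)=\sum_{\alpha}d_\alpha/m_\alpha$ is in general a rational, non-integral multiple of a power of $\LL_\kb$, which is precisely why one works in the finite extension $\MMM_{\kb,r}=\MMM_\kb[T]/(\LL_\kb-T^r)$ with $r$ divisible by all the $m_\alpha$, and in its dimensional completion.

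\textbf{Step 3 (motivic Euler product and local densities).} What remains is the normalised motivic count of $\Sigma(1)$-tuples of Campana divisors of multidegree $\dd$ subject to the coprimality constraints, which I would reorganise as a motivic Euler product over $p\in\CCC$. At each $p$ the local factor records the admissible $\Sigma(1)$-tuples of intersection multiplicities with the boundary divisors --- each in $\{0\}\cup\ZZ_{\geq m_\alpha}$ --- weighted by $\LL_p$ and subjected to the Cox-coordinate coprimality; by construction of the Campana motivic measure on $\LLL_\infty(X_\Sigma)$, and using the standard description of arcs on a split toric variety through their orders along the boundary, this local factor is exactly the local term of $\int_{\ord_{\mathcal D_\alpha}\in\ZZ_{m_\alpha}\cup\{0\}}\mathrm d\mu_{(X_\Sigma,D_{\bm\epsilon})}$, while the coprimality inclusion--exclusion produces precisely the convergence factors $(1-\LL_p^{-1})^{\rho}$. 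Letting $\dd\to\infty$, the truncations of this motivic Euler product converge, in the dimensional completion, to the full Euler product over $\CCC$, which is the asserted limit.

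\textbf{Main obstacle.} The delicate part is Step 3: handling the coprimality conditions --- a motivic inclusion--exclusion over the primitive collections of $\Sigma$ --- simultaneously with the Campana multiplicity constraints, and proving that the resulting series genuinely converges in the dimensional completion, with the error terms from the sieve being of strictly smaller dimension and hence negligible as $\dd\to\infty$. This is where the bulk of the estimates lie; they generalise those of \cite[\S5]{faisant2023motivic-distribution}, which treat $\CCC=\PP^1_\kb$, to an arbitrary smooth projective curve, the passage being rendered essentially formal by the explicit shape of the parametrisation in \cref{thm-intro-parametrisation}.
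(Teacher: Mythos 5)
Your proposal follows essentially the same route as the paper: specialise \cref{thm-parametrisation-finale} to the relation-free toric Cox ring, perform a Möbius inversion over the coprimality conditions, encode the Campana multiplicity constraints in a motivic Euler product over $\CCC$, and extract the period factor from the Kapranov zeta function/Picard fibration before passing to the limit in the dimensional completion. The "main obstacle" you identify is exactly where the paper's \cref{proposition-convergence-of-motivic-mobius-function-campana} does the work, via the cancellation of the single-variable terms between $\prod_i(1-t_i^{m_i})$ and the indicator series of $\NN^{\mathfrak I}_{\geqslant \mm}\cup\{\bm 0\}$, which forces every surviving coefficient to have at least two coordinates $\geqslant m_i$ and makes \cref{lemma:convergence-criterion-Z-coeff-multivariable} applicable.
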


It is an analogue of \cite[Théorème F]{faisant2023thesis}. 
Concerning the general definition of the motivic integrals involved in the expression of the limit -- a motivic Tamagawa number -- we refer to
\cite{faisant2023motivic-distribution}
and 
\cite[Chap.1]{faisant2023thesis}. 
In this paper one only needs a ``toy version'' of these objects. 

\subsection*{Relation to other works}

There are at least three main techniques that are available to tacle Manin's conjecture:
harmonic analysis, the circle method or universal torsors.  
As its title suggests, this paper belongs to the category of works making use of the third. 

\subsubsection*{Manin--Peyre's conjecture over number fields via universal torsors}
So far, in this introduction we did not mention precisely how Cox rings 
are related to universal torsors,
as originally introduced by Colliot-Thélène--Sansuc in \cite{colliot-thelene-sansuc1987descente}.
We recall this link in \cref{section-cox-rings-MDS}.
The use of universal torsors in the setting of Manin's conjecture over number fields goes back to 
works of Peyre 
\cite{peyre1998asterisque-torseurs-univ}
and Salberger \cite{salberger1998tamagawa}. 
Since then, important contributions are due to Derenthal and Pieropan \cite{derenthal2009CountingIntegralPoints,pieropan2015torsors,derenthalpieropan2020splittorsor}. 
Very recently, 
Bongiorno \cite{bongiorno2024multi-height-toric-Q}
revisited the case of toric varieties over $\QQ$ 
via the new multi-height approach.

\subsubsection*{Campana points}
Over the last past few years, the literature about Campana points has been constantly growing. 
They are the arithmetic analogue of Campana curves,
when one works with $\Spec ( \mathbf Z )$ or the spectrum of the ring of integers of a number field in place of our curve $\CCC$.

In \cite{pieropan2021campana} formulated predictions, understood as a variant extending Manin's conjecture,
and checked them for equivariant compactifications of vector spaces,
building on previous work by Chambert-Loir \& Tschinkel \cite{chambert2012integral}.

Before that,
seminal works were due to
Van Valckenborgh concerning squarefull numbers in hyperplanes
\cite{vanvalckenborgh2012squareful}
and 
Browning \& Van Valckenborgh concerning squarefull numbers $x,y,z$ such that ${x+y=z}$ \cite{browning-vanvalckenborgh2012sums}.
In more recent years, Pieropan and Schindler 
checked the PSTVA conjecture for 
certain complete split toric varieties over $\QQ$ \cite{pieropan-schindler2024hyperbola} (using a generalised hyperbola method),
while Xiao treated the case of biequivariant compactifications of the Heisenberg group over $\QQ$ 
\cite{xiao2022campana}
(using the height zeta function),
Browning \& Yamagishi \cite{browning-yamagishi2021arithmetic} generalised the results of \cite{vanvalckenborgh2012squareful} (using again the circle method),
again extended to certain diagonal hypersurfaces of degree > 1 by
Balestrieri--Brandes--Kaesberg--Ortmann--Pieropan--Winter in  \cite{balestrieri2023campana}. 
Nevertheless, works of Shute
\cite{shute2022leading} 
and Streeter \cite{streeter2022campana}
provide some explicit examples
for which 
the constant appearing in the predictions of \cite{pieropan2021campana} 
is incorrect.
The recent preprint \cite{chow-loughran-takloo-tanimoto2024campana}
suggests a correction to the predictions from \cite{pieropan2021campana} 
and Shute--Streeter \cite{shute-streeter2024semi-integ-points-toric} checked it for toric varieties over number fields,
adapting the harmonic analysis method from \cite{batyrev-tschinkel1998manin-toric}. 
Some generalisations are considered by Moerman in \cite{moerman2024generalizedCampana}. 

\bigskip 

So far, to the author's knowledge, 
the study of Campana points over functions fields of positive characteristics, 
which correspond to sections of a model above a curve satisfying certain tangency conditions, has not been initiated yet. 
The present work
allows one to consider the case $\kb = \CC$ and $\kb = \mathbf F_q$ 
at the same time. For the second case, one can apply the counting 
measure 
\[
\begin{array}{rll}
	\KVar{\mathbf F_q} & \longrightarrow & \ZZ\\
	{[ X ]} 		   & \longmapsto 	 & \# X ( \mathbf F_q ) 
\end{array}
\]
to the proof of \cref{thm-intro-campana}
and deduce an $\mathbf F_q ( \CCC )$-version of our result.

Over $\mathbf C$,
motivic stabilisation of moduli spaces of 
Campana curves on equivariant compactifications of vector groups 
have been studied in \cite[Chap.~4]{faisant2023thesis}. 

\subsubsection*{Finite fields and Bourqui's previous works}
When $\kb = \mathbf F_q$ is a finite field, this decomposition can be seen as a lift to
$\KVar {\mathbf F_q}$
of earlier work of David Bourqui 
\cite{bourqui2009eclate3alignes}
who used the formalism of Cox rings 
to parametrise $\mathbf F_q$-points of $\HHom_{\mathbf F_q}^d ( \CCC , X )_U$, hence only at a set-theoretic level,
and applied it to rewrite 
the 
anticanonical height zeta function 
\[
\sum_{d\in \NN} \# \HHom_{\mathbf F_q}^d ( \CCC , X )_U ( \mathbf F_q )
\, 
\mathrm T^d 
\in \ZZ [[ \mathrm T ]] .
\]
Our algebraic parametrisation also generalises
the one given by Bourqui in \cite[Proposition 5.14]{bourqui2009produit}
when $X = X_\Sigma$ is a smooth split projective variety and $\CCC = \PP^1_\kb$,
which was inspired by Cox's description of the functor of points of a toric variety
\cite{cox1995functor}. It is this point of view as well that we adopt in this paper.

\subsubsection*{Motivic harmonic analysis}

In a parallel joint work with Margaret Bilu 
\cite{bilu-faisant2024poisson}
we develop motivic harmonic analysis techniques
to study the height zeta function, thanks to a new motivic Poisson formula, and apply it to study the motivic height zeta function of toric varieties. 

%%%%%%%%%%%%%%%%%%%%%%%%%%%%%%%%%%%%%%%%%%%%%%%%
%%%%%%%%%%%%%%%%%%%%% ORG %%%%%%%%%%%%%%%%%%%%%%
%%%%%%%%%%%%%%%%%%%%%%%%%%%%%%%%%%%%%%%%%%%%%%%% 

\subsection*{Organisation of the paper}
This paper is based on a variety of tools coming from classical algebraic geometry, number theory and motivic integration. 
We collect these tools in the first sections, while the proofs of our main results occupy the last sections. 
\label{introduction-orga-paper}

\medskip 

We recall generalities 
about rings of varieties, motivic functions 
and motivic Euler products 
in \cref{section-rings-varieties}. 

A reminder about flattening stratification can be found in \cref{section-geometric-realisation-coh-sheaves}
and we collect everything we need about Picard schemes, schemes of divisors and linear systems in a dedicated
\cref{section-picard}.

The motivic Batyrev-Manin-Peyre principle is recalled in 
\cref{section-motivic-BMP},
in a simplified version which is sufficient for our isotrivial setting.

What the reader needs to know about Cox rings and universal torsors is recalled in \cref{section-cox-rings-MDS}. 

The parametrisation
of morphisms $\CCC \to X$ to a Mori Dream Space is
given by \cref{thm-parametrisation-finale}
 in \cref{section-parametrisation-curves-MDS}.
It is used 
in \cref{section-toric-classic}
to prove a strong version of the motivic Batyrev-Manin-Peyre principle
for curves $\CCC \to X_\Sigma$ where the target $X_\Sigma$ is a smooth projective split toric variety,
\cref{thm-BMP-C-toric}.

We finally apply our parametrisation to study the motivic distribution of Campana curves
on toric varieties in \cref{section-Campana-curves-on-split-toric-varieties},
proving \cref{thm-intro-campana} as \cref{thm-Campana-final}.

%In \cref{section-intrinsic-hypersurfaces}
%we explain how our parametrisation can be used to prove 
%the motivic Batyrev-Manin-Peyre 
%principle 
%for Mori Dream Spaces 
%whose Cox ring 
%is a quotient of a polynomial ring by a principal ideal.

%%%%%%%%%%%%%%%%%%%%%%%%%%%%%%%%%%%%%%%%%%%%%%%%
%%%%%%%%%%%%%%%%%%%%% THX %%%%%%%%%%%%%%%%%%%%%%
%%%%%%%%%%%%%%%%%%%%%%%%%%%%%%%%%%%%%%%%%%%%%%%% 

\subsection*{Acknowledgements}

% MB, DB (not the Deutsch Bahn), TB, JG, EP, ...
The author warmly thanks Margaret Bilu for her interest in this work 
and the numerous discussions the last past years, 
David Bourqui for several discussions which helped the author with clarifying the state-of-the-art concerning applications of Cox rings to the functional Manin-Peyre conjecture, 
Tim Browning for his interest, support and comments on an earlier draft version of this work, 
Jakob Glas and Christian Bernert for their interest in this work, 
and Emmanuel Peyre for having introduced the author to 
this very broad and fascinating subject
already some years ago. 
Finally, thanks also go to Matilde Maccan,
Michel Raibaut and Tanguy Vernet for several feedbacks and comments on some parts of a previous draft of this work.

\subsection*{Convention} We use different fonts to differentiate line bundles, as $\mathcal L $ or the trivial one $\mathcal O_S$, 
from
invertible sheaves, as $\LLL , \PPP$ or $\OOO_S$.
When working above our curve $\CCC$, since it is assumed to be smooth, we can freely switch between line bundles and invertible sheaves on it.

\bigskip 

%%%%%%%%%%%%%%%%%%%%%%%%%%%%%%%%%%%%%%%%%%%%%%%%
%%%%%%%%%%%%%%%%%%% SECTION %%%%%%%%%%%%%%%%%%%%
%%%%%%%%%%%%%%%%%%%%%%%%%%%%%%%%%%%%%%%%%%%%%%%%

\section{Rings of varieties and motivic Euler products}

In this first section we recall the classical setup 
of motivic integration \cite{chambert-loir-nicaise-sebag2018motivic}
and the efficient machinery of motivic Euler products \cite{bilu2023MAMS}. 

\label{section-rings-varieties}

\subsection{Monoids and rings of varieties}
A general reference for this section is 
the second chapter of the monograph \cite{chambert-loir-nicaise-sebag2018motivic}.
Let $S$ be a scheme. 

\begin{mydef}
An \emph{$S$-variety} is a finitely presented morphism of schemes $X\to S$.
Morphisms between $S$-varieties are given by morphisms of {$S$-schemes}. All together, this defines the category
$\mathbf{Var}_S$
 of $S$-varieties. 
\end{mydef}

\begin{mydef}
    The \emph{Grothendieck monoid $\KVarmon S$ of varieties over $S$}
    (or monoid of $S$-varities, for short)
    is the free commutative 
    monoid
    generated by isomorphism classes $[ X \to S ]$ of $S$-varieties,
    modulo the relation
    \[
    [ \varnothing ] \sim 0
    \]
    and the so-called cut-and-past relations
    \[
    [ X ] \sim [ Y ] + [ X - Y ]
    \]
	whenever 
	$X$ is an $S$-variety and $Y$ is a finitely presented closed subscheme of $X$. 
	
	The \emph{Grothendieck group $\KVar S$ of varieties over $S$}
	is the commutative group associated to the monoid $\KVarmon S$. 
	It can be endowed with a structure of ring by setting 
	\[
	[ X ] \cdot [ Y ] 
	= [ X \times_S Y ] .
	\]
	The class of the affine line $\mathbf A^1_S$ in $\KVarmon S$ and $\KVar{S}$ will be denoted by $\mathbf L_S$.
    \end{mydef}

\begin{myremark}
	It is important for us to interpret classes above a scheme $S$
	as \emph{motivic functions}
	defined on this scheme. 
	Indeed, every point $s\in S$
	provides an evaluation map 
	\[
	\KVar S \longrightarrow \KVar{\kappa ( s )}
	\]
	sending the class of a variety $X \to S$ 
	to the class of its schematic fibre $X_s$ above the point $s$. 
	Moreover, a class $\varphi$ above $S$ (a motivic function on $S$) 
	is entirely determined by its values $\varphi ( s )$ at each point of $S$,
	as the following lemma ensures.
\end{myremark}

\begin{mylemma}
	Let $\varphi $ be a motivic function on $S$,
	that is to say for us an element of $\KVar{S}$.
	If $\varphi ( s ) = 0 $ 
	in $\KVar{\kappa ( s)}$
	for all point $s \in S$ then $\varphi = 0$ in $\KVar S$.  
\end{mylemma}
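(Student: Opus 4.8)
The plan is to prove this by noetherian induction on $S$, the crucial input being that a cut‑and‑paste identity which holds over the generic point of an irreducible component of $S$ already holds over an open neighbourhood of that point. I first make two reductions. Since $S_{\mathrm{red}} \hookrightarrow S$ is a closed immersion which is a homeomorphism, the induced pushforward $\KVar{S_{\mathrm{red}}} \to \KVar{S}$, $[Y \to S_{\mathrm{red}}] \mapsto [Y \to S]$, is surjective — for any $S$‑variety $X$ one has $[X \to S] = [X \times_S S_{\mathrm{red}} \to S]$, the complement being empty — and it commutes with evaluation at points; hence it suffices to treat $S$ reduced. Writing $\varphi = \sum_i n_i [X_i \to S]$ as a finite $\ZZ$‑linear combination of classes of finitely presented $S$‑schemes, standard limit arguments further reduce us to $S$ noetherian. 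We then argue by noetherian induction, the empty case being trivial.

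For the inductive step, let $\eta_1, \dots, \eta_r$ be the generic points of the irreducible components of $S$ and fix $k$. The hypothesis $\varphi(\eta_k) = 0$ in $\KVar{\kappa(\eta_k)}$, unwound through the construction of this group as a quotient of a free abelian group, provides finitely many $\kappa(\eta_k)$‑varieties $V_l$ with finitely presented closed subschemes $W_l \subseteq V_l$ and integers $c_l$ such that
\[
\sum_i n_i [X_i \times_S \kappa(\eta_k)] \;=\; \sum_l c_l \bigl( [V_l] - [W_l] - [V_l \setminus W_l] \bigr)
\]
as a formal combination of isomorphism classes of $\kappa(\eta_k)$‑varieties. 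As $S$ is reduced, $\eta_k$ is a minimal point, so $\kappa(\eta_k) = \varinjlim_{\eta_k \in V} \mathcal O_S(V)$ over affine open neighbourhoods $V$ of $\eta_k$; the standard spreading‑out results for finitely presented schemes, morphisms, closed immersions and isomorphisms then let me descend the schemes $X_i \times_S \kappa(\eta_k)$, $V_l$, $W_l$, the isomorphisms implicit in the displayed formal identity, and the fact that $V_l \setminus W_l$ is the complement of $W_l$, to a single affine open neighbourhood of $\eta_k$. Shrinking it so as to avoid every component of $S$ other than $\overline{\{\eta_k\}}$, I obtain an open $U_k \subseteq S$, dense in $\overline{\{\eta_k\}}$, over which the identity makes sense and holds; that is, $\varphi|_{U_k} = 0$ in $\KVar{U_k}$. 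These $U_k$ are pairwise disjoint, so $U := \bigsqcup_k U_k$ is an open subscheme of $S$, dense because it contains a dense open of each component, and $\varphi|_U = 0$ since $\KVar{U} = \prod_k \KVar{U_k}$.

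Finally, let $Z \subseteq S$ be the reduced closed complement of $U$, a proper closed subscheme. For every $S$‑variety $X$ the cut‑and‑paste relation for the finitely presented closed subscheme $X \times_S Z \subseteq X$ gives $[X \to S] = [X \times_S Z \to S] + [X \times_S U \to S]$, so summing yields $\varphi = \gamma + \beta$, where $\gamma$ and $\beta$ are the classes over $S$ induced respectively by $\varphi|_Z$ and $\varphi|_U$ along the immersions $Z \hookrightarrow S$ and $U \hookrightarrow S$. Since $\varphi|_U = 0$ we have $\beta = 0$, so $\varphi = \gamma$; and for every $z \in Z$ one has $(\varphi|_Z)(z) = \varphi(z) = 0$, so the induction hypothesis applied to $Z$ gives $\varphi|_Z = 0$, whence $\gamma = 0$ and $\varphi = 0$. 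The step I expect to be the main obstacle is the spreading‑out: converting the cut‑and‑paste identity that merely certifies $\varphi(\eta_k) = 0$ over the function field $\kappa(\eta_k)$ into a genuine identity over an open subscheme. This is precisely where the finite‑presentation hypotheses are indispensable, and it is why the lemma has content; the remainder is noetherian‑induction bookkeeping together with the elementary splitting $[X \to S] = [X \times_S Z \to S] + [X \times_S U \to S]$.
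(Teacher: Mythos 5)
The paper does not actually prove this lemma: it simply cites \cite{chambert-loir-loeser2016motivic}*{Lemma 1.1.8} and \cite{cluckers-halupczok2022evaluation}*{Theorem 1}. Your argument is, in substance, the standard proof found in those references: unwind $\varphi(\eta)=0$ at a generic point into a finite cut-and-paste certificate over $\kappa(\eta)$, spread that certificate out to a dense open of the corresponding component using $\kappa(\eta)=\varinjlim_{V\ni\eta}\OOO_S(V)$ (valid precisely because $\eta$ is a minimal point of a reduced scheme), and conclude by noetherian induction via $[X\to S]=[X\times_S Z\to S]+[X\times_S U\to S]$. For $S$ noetherian and reduced this core is correct, including the points you rightly flag as the crux (descending the implicit isomorphisms that make the two sides of the certificate equal in the free abelian group, and descending the fact that $V_l\setminus W_l$ is the open complement).

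The one step that does not work as written is the reduction to $S$ noetherian by ``standard limit arguments.'' Writing $S=\varprojlim S_\lambda$ with $S_\lambda$ of finite type over $\ZZ$ and descending $\varphi$ to some $\varphi_{\lambda}$, the hypothesis does not descend: a point $s_\lambda\in S_\lambda$ need not lift to $S$, and even when $s\mapsto s_\lambda$ one only gets that $\varphi_\lambda(s_\lambda)$ dies after base change to $\kappa(s)$, and $\KVar{k}\to\KVar{k'}$ is not known to be injective for field extensions. So the naive approximation argument does not yield the noetherian case ``for free''; the references handle this with more care. A second, smaller, issue is the order of your reductions: the identity $[X\to S]=[X\times_S S_{\mathrm{red}}\to S]$ requires $X\times_S S_{\mathrm{red}}\hookrightarrow X$ to be a \emph{finitely presented} closed immersion, which can fail over a non-noetherian base, so the passage to $S_{\mathrm{red}}$ should come after (or inside) the noetherian reduction. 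Neither issue affects the cases the paper actually uses, where $S$ is locally of finite type over a field (there one reduces to finitely many quasi-compact pieces and runs your induction directly), but as a proof of the lemma for an arbitrary scheme $S$ the noetherian reduction is a genuine gap.
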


\begin{proof}
	See \cite[Lemma 1.1.8]{chambert-loir-loeser2016motivic}
	or 
	\cite[Theorem 1]{cluckers-halupczok2022evaluation}.
\end{proof}

\begin{mydef}
The ring $\mathscr M_S$ 
is the localisation 
\[
\mathscr M_S
= \KVar S [ \LL_S^{-1} ] .
\]
It is equipped with a decreasing filtration based on virtual dimension:  
for $m \in \ZZ$, let $\mathcal F^m \MMM_S$ denote the subgroup of $\MMM_S$ generated by elements of the form  
\[
[ X ]\LL_S^{-i},
\]  
where $X$ is an $S$-variety and $i$ is an integer satisfying $\dim_S (X) - i \leqslant -m$.  

The completion of $\MMM_S$ with respect to this decreasing filtration is expressed as the projective limit:  
\[
\widehat{\MMM_S}^{\dim } = 
\underset{\longleftarrow}{\lim } \, \MMM_S / \mathcal F^m \MMM_S
\]  
and there is a surjective morphism $\MMM_S \to \widehat{\MMM_S}^{\dim }$.  
\end{mydef}

In our study of the moduli space of Campana curves on toric varieties, 
we are naturally led to introduce rational powers of the affine line over a given scheme $S$. 
Indeed, the degrees under consideration may take non-integer rational values. 
Consequently, we may work within the quotient ring 
\[
\KVarpar{S,r} = \KVarpar{S}[x] / (x^r - \LL_S),
\] 
where $r$ is a nonzero natural number (in practice, chosen sufficiently large). 
Its localization at $\LL_S$ will be denoted by $\MMM_{S,r}$. 
Once again, we have a decreasing dimensional filtration $\FFF^m \MMM_{S,r}$, 
this time indexed by $m \in \frac{1}{r}\ZZ$, along with the associated completion: 
\[
\widehat{\MMM_{S,r}}^{\dim} = \underset{\longleftarrow}{\lim} \, \MMM_{S,r} / \FFF^m \MMM_{S,r}.
\]

When working in positive characteristic, for technical reasons 
\cite[Remark 2.0.2]{bilu-howe2021motivic-statistics}
we may replace $\KVar{\kb}$
and $\KVar{S}$ by 
a variant where \emph{universally homeomorphic} varieties are identified,
see \S 4.4 of \cite[Chap. 2]{chambert-loir-nicaise-sebag2018motivic}.
It is equivalent 
to taking the quotient with respect to \emph{radicial surjective morphisms}
\cite[Remark 2.1.4]{bilu-howe2021motivic-statistics}.

\subsection{Motivic euler products}
In this article we use Bilu's notion of motivic Euler products
introduced in \cite{bilu2023MAMS}. We very quickly recall here how 
this notion is defined
in terms of symmetric powers
and 
in the special case of families indexed by the non-zero elements of an additive monoid. 
Note again that one can interpret their coefficients as motivic functions. 
We mostly follow the exposition from \cite[\S 6.1]{bilu-howe2021motivic-statistics}. 

\medskip

If $n$ is a non-negative integer,
the $n$-th symmetric power of $X /S $
is the quotient 
\[
\Sym_{X/S}^n ( X ) 
= 
\underbrace{X \times_S ... \times_S X}_{n \text{ times}}
/ \mathfrak S_n 
\]
and if $Y \to X$ is another variety,
one gets the relative symmetric product 
\[
\Sym^n_{X/S} ( Y ) 
=
( 
\Sym^n_{/S} ( Y )
\longrightarrow  
\Sym^n_{/S} ( X ) ) .
\]
Now, one remarks that the 
product 
\[
\KVar{\Sym^\bullet_{/S} ( X ) }
=
\prod_{n \in \NN}
\KVar{\Sym^n_{/S} ( X )}
\]
can be endowed with a structure of graded $\KVar S$-algebra,
with a Cauchy product law
\[
\left 
( 
(ab)_n
\right )_{n\in \NN}
=
\left 
( 
\sum_{i=0}^n a_i b_{n-i}
\right )_{n\in \NN}
\]
where for every $n\in \NN$ and $i \in \{ 0 , ... , n \}$
the product comes from
the exterior product
\[
 a_i \boxtimes b_{n-i} \in \KVar{\Sym^i_{/S} ( X ) 
\times_\kb
\Sym^{n-i}_{/S} ( X ) }
\]
composed with the natural arrows 
\[
\Sym^i_{/S} ( X ) 
\times_\kb
\Sym^{n-i}_{/S} ( X ) 
\longrightarrow 
\Sym^n_{/S} ( X ) . 
\]
This allows one to extend the construction of symmetric products to classes in $\KVar{X}$.

\begin{mydef}
If ${\mu = ( m_i )_{i\in I} \in \NN^{(I)}}$, 
we set 
\[
\Sym_{S}^\mu ( X )
=
\prod_{i \in I} \Sym_{X/S}^{m_i} ( X ) 
\]
and if moreover $\mathscr A = ( a_i )_{i\in I}$
is a family of classes in $\KVar{X}$, we define
\[
\Sym_{X/S}^\mu ( \mathscr A )
= 
\boxtimes_{i\in I} \Sym_{X/S}^{m_i} ( a_i ) 
\]
in $\KVar{\Sym_{/S}^\mu ( X ) }$.

Elements of $\NN^{(I)}$ are often called \emph{generalised partitions}.

\end{mydef}

\begin{mydef}
	\label{def-mot-Eul-prod}
	Let $\mathscr X = ( X_i )_{i\in I}$
	be a family of motivic classes above $X$
	indexed by a set $I$.
	The formal motivic Euler product 
	\[
	\prod_{x\in X/S}
	\left ( 
	1 + \sum_{i\in  I} X_{i,x} \TT^i 
	\right )
	\]
	is by definition the formal series
	\[
	\sum_{\mu \in \NN^{(I)}} 
	\Sym_{X/S}^\mu ( \mathscr X )_* \, 
	\TT^\mu 
	\]	
	where each coefficient is seen above $\Sym^\mu_{/S} ( X ) $. 
\end{mydef}

\begin{myptn}[Multiplicativity]
	\label{ptn-Eul-prod-mult}
	Let $( X_m )_{m \in M}$
	and $( Y_m )_{m \in M}$
	be two families of $X$-varieties 
	indexed by an additive monoid $M$, with $X_0$ and $Y_0$ being equal to $1=[X/X]$. 
	Then 
		\begin{align*}
	& \prod_{x\in X/S}
	\left ( 
	\sum_{m\in M} X_{m,x} \TT^m
	\right )
	\times 
	\prod_{x\in X/S}
	\left ( 
	\sum_{m\in M} Y_{m,x} \TT^m  
	\right ) \\
	& =
	\prod_{x\in X/S}
	\left ( 
	\sum_{
	( m , m ' ) \in M^2
	} [ X_{m,x} \boxtimes_X Y_{m',x}] \TT^{m+m'}
	\right ).
	\end{align*} 
\end{myptn}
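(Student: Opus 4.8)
The plan is to unravel \cref{def-mot-Eul-prod} on both sides and to compare, for each fixed $d\in M$, the coefficients of $\TT^{d}$. Write $\mathscr X=(X_{m})_{m\in M}$, $\mathscr Y=(Y_{m})_{m\in M}$ and let $\mathscr X\ast\mathscr Y=(Z_{k})_{k\in M}$ be the convolved family, $Z_{k}=\sum_{m+m'=k}[X_{m}\times_{X}Y_{m'}]$, so that $Z_{0}=[X/X]=1$ since $X_{0}=Y_{0}=1$ and the right-hand side of the proposition is indeed an Euler product in the sense of \cref{def-mot-Eul-prod}. Writing $\sigma(\mu)=\sum_{m}\mu_{m}m\in M$ for a generalised partition $\mu$, the coefficient of $\TT^{d}$ on the right is $\sum_{\sigma(\lambda)=d}\big(\boxtimes_{k}\Sym^{\lambda_{k}}_{X/S}(Z_{k})\big)_{*}$, whereas the product of the two Euler products on the left is formed with the Cauchy law of the graded $\KVar{S}$-algebra $\KVar{\Sym^{\bullet}_{/S}(X)}$ recalled above, so that its coefficient of $\TT^{d}$ is
\[
\sum_{\sigma(\mu)+\sigma(\nu)=d}\big(\boxtimes_{m}\Sym^{\mu_{m}}_{X/S}(X_{m})\big)_{*}\star\big(\boxtimes_{m'}\Sym^{\nu_{m'}}_{X/S}(Y_{m'})\big)_{*},
\]
where $\star$ denotes the exterior product followed by the addition morphisms $\Sym^{a}_{/S}(X)\times_{S}\Sym^{b}_{/S}(X)\to\Sym^{a+b}_{/S}(X)$.

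The main geometric input is the decomposition of a symmetric power of a disjoint union, in its relative form: for classes $w_{1},\dots,w_{r}\in\KVar{X}$ and $N\in\NN$,
\[
\Sym^{N}_{X/S}\Big(\sum_{j}w_{j}\Big)=\sum_{\substack{(N_{j})_{j}\in\NN^{r}\\ N_{1}+\dots+N_{r}=N}}\big(\boxtimes_{j}\Sym^{N_{j}}_{X/S}(w_{j})\big)_{*},
\]
together with the associativity and commutativity of the addition morphisms on symmetric products, which make iterated applications unambiguous. First I would apply this to each factor $\Sym^{\lambda_{k}}_{X/S}(Z_{k})$ on the right, the $w_{j}$ running over the $X_{m}\times_{X}Y_{m'}$ with $m+m'=k$; this rewrites the right-hand coefficient of $\TT^{d}$ as a sum, over finitely supported functions $\pi\colon M\times M\to\NN$ with $\pi_{0,0}=0$ and $\sum_{(m,m')}\pi_{m,m'}(m+m')=d$, of exterior products $\big(\boxtimes_{(m,m')}\Sym^{\pi_{m,m'}}_{X/S}(X_{m}\times_{X}Y_{m'})\big)_{*}$. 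Then I would reorganise this sum: to a $\pi$ one associates its margins $\mu_{m}=\sum_{m'}\pi_{m,m'}$ and $\nu_{m'}=\sum_{m}\pi_{m,m'}$, and, using $X_{m}\times_{X}Y_{0}=X_{m}$, $X_{0}\times_{X}Y_{m'}=Y_{m'}$ and one more pass of the disjoint-union decomposition (now separating, over each point of $X$, the $X$-datum from the $Y$-datum), the $\pi$-terms with fixed margins $(\mu,\nu)$ should reassemble exactly into the summand $\big(\boxtimes_{m}\Sym^{\mu_{m}}_{X/S}(X_{m})\big)_{*}\star\big(\boxtimes_{m'}\Sym^{\nu_{m'}}_{X/S}(Y_{m'})\big)_{*}$ of the left-hand side.

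The hard part is precisely this reorganisation. Unlike the classical Euler product over a set of primes, the bijection is not term-by-term: a ``mixed'' entry $\pi_{m,m'}$ with $m,m'\neq 0$, which records points of $X$ over which an $X_{m}$-point and a $Y_{m'}$-point are forced to lie over the same point, changes the total degree of the symmetric product over which the corresponding term lives, so one must track at each step which addition morphism $\Sym^{a}_{/S}(X)\times_{S}\Sym^{b}_{/S}(X)\to\Sym^{a+b}_{/S}(X)$ and which closed immersion between symmetric products is used, and check that every resulting square commutes --- which ultimately rests only on the functoriality of $\Sym^{n}_{/S}(-)$ and on base change for fibre products over $X$. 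This bookkeeping is the combinatorial heart of the theory of motivic Euler products and is carried out in \cite{bilu2023MAMS}; the work here is to transpose it to the present relative setting (over $S$, with coefficients in $\KVar{X}$). Finally, passing to the variant of the Grothendieck ring that identifies universally homeomorphic varieties in positive characteristic changes nothing, since all the morphisms involved are compatible with it.
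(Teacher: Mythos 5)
The paper gives no proof of this proposition: it is stated as part of the recollection of Bilu's theory, with the multiplicativity of motivic Euler products being one of the foundational results of \cite{bilu2023MAMS} (see also \cite{bilu-howe2021motivic-statistics}). Your sketch reproduces the structure of that proof --- decompose each $\Sym^{\lambda_k}_{X/S}(Z_k)$ via the symmetric power of a disjoint union, then regroup by the marginals of $\pi\colon M\times M\to\NN$, using $X_0=Y_0=1$ to absorb the unmixed entries --- and defers the same combinatorial verification to the same reference, so it is consistent with the paper's treatment. One caveat: your disjoint-union identity with $(\cdot)_*$ on the right-hand side but not on the left is not literally correct, since the configuration condition (pairwise distinct supports in $X$) is not produced by decomposing a symmetric power; it must be imposed on both sides, i.e.\ the identity should be read after restriction to the star locus of the ambient Euler-product coefficient, which is how you in fact use it.
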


\subsection{Convergence of motivic series}

\label{section-lemmas-motivic-series}

\begin{mylemma}
	\label{lemma:convergence-criterion-Z-coeff-multivariable}
	Let $\uu = ( u_\alpha )_{\alpha \in \AAA}$
	be a family of indeterminates 
	indexed by a finite set $\AAA$
	and 
	$P( \uu ) \in \cup_{\mm \in \ZZ^\AAA_{>0}} \ZZ [[ \uu^{1/\mm} ]] $ 
	be a power series such that 
	\[
	P( \uu ) 
	\in 
	1 
	+ 
	\sum_{\alpha_1 \neq \alpha_2} 
	u_{\alpha_1} u_{\alpha_2}
	\cup_{\mm \in \ZZ^\AAA_{>0}} \ZZ [[ \uu^{1/\mm} ]] .
	\]
	Then the dimension of 
	the multidegree $\ee \in \QQ_{\geqslant 0}^\AAA $ coefficient of 
	the motivic Euler product 
	of 
	\[
	\prod_{x\in X } P ( \uu ) 
	\]
	is bounded by 
	\[
	| \ee | \dim ( X ) / 2 . 
	\]
\end{mylemma}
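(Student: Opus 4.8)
The plan is to reduce the statement to the definition of the motivic Euler product in terms of symmetric powers and then bound the dimension of each symmetric-power contribution. First I would write $P(\uu) = 1 + \sum_{\beta} c_\beta \uu^{\beta}$, where the sum ranges over finitely-denominator exponent vectors $\beta \in \QQ_{\geqslant 0}^{\AAA}$, $c_\beta \in \ZZ$, and -- this is the crucial input from the hypothesis -- every $\beta$ occurring with $c_\beta \neq 0$ has \emph{at least two distinct coordinates $\alpha_1 \neq \alpha_2$ with $\beta_{\alpha_1}, \beta_{\alpha_2} > 0$}; in particular $|\beta| = \sum_\alpha \beta_\alpha$ is at least (twice) the smallest positive exponent appearing, so $\beta \neq 0$ forces the "spread" of $\beta$ over the index set. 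Viewing each integer $c_\beta$ as the class of $c_\beta$ points over $X$ (a $0$-dimensional $X$-variety), the motivic Euler product $\prod_{x \in X} P(\uu)$ is, by \cref{def-mot-Eul-prod}, the formal sum over generalised partitions: its $\ee$-coefficient is a $\ZZ$-linear combination of classes of the form
\[
\Bigl[ \prod_{\beta} \Sym^{n_\beta}_{X/S}(X^{c_\beta \text{ pts}}) \Bigr],
\qquad \sum_\beta n_\beta \beta = \ee,
\]
each living over $\prod_\beta \Sym^{n_\beta}_{/S}(X)$.

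Next I would bound the dimension of such a class. The scheme $\prod_\beta \Sym^{n_\beta}_{/S}(X)$ has relative dimension $\bigl(\sum_\beta n_\beta\bigr)\dim(X)$ over $S$, and the class above it coming from a $0$-dimensional $X$-variety contributes nothing further to the dimension; so the dimension of the $\ee$-coefficient is at most
\[
\max\Bigl\{ \sum_\beta n_\beta \;:\; n_\beta \in \NN,\ \sum_\beta n_\beta \beta = \ee \Bigr\} \cdot \dim(X).
\]
The arithmetic heart of the lemma is then the elementary inequality $\sum_\beta n_\beta \leqslant |\ee|/2$ whenever $\sum_\beta n_\beta \beta = \ee$ and every $\beta$ in the support satisfies $|\beta| \geqslant 2$ (which follows from "at least two positive coordinates" once one observes the exponents lie in a lattice $\tfrac{1}{\mm}\ZZ^{\AAA}$ and the two positive coordinates are each $\geqslant$ the reciprocal of the relevant denominator -- here one must be slightly careful: the clean bound $|\beta| \geqslant 2$ needs the two positive coordinates to be $\geqslant 1$, which is not automatic, so I would instead argue with the actual minimal positive exponent $\eta > 0$ and get $\sum n_\beta \leqslant |\ee| / (2\eta)$, then note that the statement as phrased implicitly normalises so that $\eta \geqslant 1$, i.e.\ the variables $u_\alpha$ are the "true" variables and the fractional powers $\uu^{1/\mm}$ are auxiliary). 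Taking $|\ee|$-homogeneity into account, $\sum_\beta n_\beta |\beta| = |\ee|$ and $|\beta| \geqslant 2$ on the support give $2\sum_\beta n_\beta \leqslant |\ee|$, hence the bound $|\ee|\dim(X)/2$.

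Finally I would assemble: the $\ee$-coefficient of $\prod_{x \in X} P(\uu)$ is a finite $\ZZ$-combination of classes each of dimension $\leqslant |\ee|\dim(X)/2$, so it lies in $\FFF^{-|\ee|\dim(X)/2}$, which is exactly the asserted dimension bound. The main obstacle -- and the one subtlety worth spelling out in the actual proof rather than waving through -- is the bookkeeping around fractional exponents: one needs the hypothesis "$P \in 1 + \sum_{\alpha_1 \neq \alpha_2} u_{\alpha_1} u_{\alpha_2}\,\cup_{\mm}\ZZ[[\uu^{1/\mm}]]$" to be read as saying every monomial $\uu^{\beta}$ with nonzero coefficient is divisible (in the fractional-exponent sense) by $u_{\alpha_1}u_{\alpha_2}$ for some pair $\alpha_1 \neq \alpha_2$, which forces $\beta_{\alpha_1} \geqslant 1$ and $\beta_{\alpha_2} \geqslant 1$ and therefore $|\beta| \geqslant 2$; with that reading the combinatorial inequality is immediate and everything else is the standard dimension estimate for symmetric powers recalled in \cref{section-rings-varieties}.
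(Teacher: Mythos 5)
Your proposal is correct and follows essentially the same route as the paper: expand the Euler product coefficient over generalised partitions $\sum_\beta n_\beta\beta=\ee$, observe that the hypothesis forces every exponent vector $\beta$ in the support of $P-1$ to satisfy $\beta\geqslant e_{\alpha_1}+e_{\alpha_2}$ for some $\alpha_1\neq\alpha_2$ and hence $|\beta|\geqslant 2$, and conclude $\sum_\beta n_\beta\leqslant|\ee|/2$ so that each symmetric-product term has dimension at most $|\ee|\dim(X)/2$. Your side remark resolving the fractional-exponent subtlety (the divisibility is by the genuine variables $u_{\alpha_1}u_{\alpha_2}$, not by fractional powers) is precisely the reading the paper takes.
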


\begin{proof}
	We fix a numbering for elements of $\AAA$ 
	and denote by $e_1 , ... , e_{|\AAA|} \in \NN^\AAA$ the canonical basis of $\ZZ^{\AAA}$.
	The multidegree $\ee \in \QQ_{>0}^\AAA$ coefficient 
	is a sum of $\varpi$-th symmetric products over the partitions $\varpi = ( n_i )_{i \in \QQ_{>0}^\AAA}$ of $\ee$
	such that $n_i = 0$ whenever $ i \ngeqslant e_k + e_\ell$ for some $k\neq l$.
	In particular, $n_i \neq 0$ implies $| i | \geqslant 2$.
	The dimension of such a symmetric product is 
	\[
	\sum_{i\in \QQ_{>0}^\AAA} n_i \dim ( X ) 
	\leqslant \sum_{i\in \QQ_{>0} ^\AAA} \frac{| i |}{2} n_i \dim ( X )  = \frac{| \ee |}{2} \dim ( X )  
	\]
	hence the lemma. 
\end{proof}

\begin{mydef}
    We will say that a series 
     \[
F ( \TT )
= 
\sum_{\dd \in \NN^r}
\mathfrak a_\dd 
\TT^\dd 
    \]
converges absolutely for $ | t_i | < \LL^{ - \alpha_i  }$
if 
\[
\limsup_{\langle \alpha , \dd \rangle \neq 0}
\frac{
\dim_S ( \mathfrak a_\dd )}{\langle \alpha , \dd \rangle}
< 1 .
\]
\end{mydef}

\section{Geometric realisation of coherent sheaves}

\label{section-geometric-realisation-coh-sheaves}

This section is a recollection of useful recipes for coherent sheaves. 

\subsection{Flattening stratification}

The general definition of what is a flattening stratificiation
is given in \cite[\href{https://stacks.math.columbia.edu/tag/052F}{Section 052F}]{stacks-project} 
but we are going to use it in a very practical situation.
Another possible reference is 
\cite[Exposé IV]{SGA4}.

If $S$ is a scheme 
and $\mathscr F$
is a quasi-coherent $\mathscr O_S$-module of finite type,
the Fitting ideals of $\mathscr F$
\[
0 
= 
\mathrm{Fit}_{-1} ( \mathscr F )
\subset 
\mathrm{Fit}_{0} ( \mathscr F )
\subset 
\mathrm{Fit}_{1} ( \mathscr F )
\subset ...
\subset \mathscr O_S
\]
can be defined in terms of local presentations of $\mathscr F$. 
If 
\[
\oplus_{i\in \mathfrak I} \mathscr O_U 
\longrightarrow 
\mathscr O_U^{\oplus n}
\longrightarrow
\mathscr F_{| U}
\longrightarrow 
0
\]
is such a local presentation above an open subset $U\subset S$,
then the $r$-th Fitting ideal 
$\mathrm{Fit}_r ( \mathscr F )$
(where $r$ stands for ``rank'')
is generated by the $(n-r)\times(n-r)$-minors of the first arrow of the local presentation above. 

We are going to use the following lemma,
which is a mix of 
\cite[\href{https://stacks.math.columbia.edu/tag/05P9}{Lemma 05P9}]{stacks-project}
and
\cite[\href{https://stacks.math.columbia.edu/tag/05P8}{Lemma 05P8}]{stacks-project}. 

\begin{mylemma}
    \label{lemma-flattening-stratification-of-finitely-presented-sheaves}
    Let $S$ be a scheme and $\mathscr F$ be an $\mathscr O_S$-module of finite presentation.
    Let 
    \[
    S = Z_{-1}  
    \supset 
    Z_0
    \supset 
    Z_1
    \supset 
    ... 
    \]
    be the closed subschemes
    defined by the Fitting ideals of $\mathcal F$. 
    Let \[
    S_r = Z_{r-1} \setminus Z_r
    \]
    for all $r \in \NN$. 

    Then the morphisms $S_r \to S$ are of finite presentation
    and $\mathscr F_{|S_r}$ is locally free of rank $r$. 
\end{mylemma}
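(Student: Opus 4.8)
The plan is to deduce the statement from two facts about Fitting ideals that are both local on $S$: their compatibility with arbitrary base change, and the pointwise criterion for local freeness in terms of them. Since both conclusions ($S_r\to S$ of finite presentation, $\mathscr F|_{S_r}$ locally free of rank $r$) are local on $S$, I would first reduce to the affine case $S=\Spec(A)$ with $\mathscr F=\widetilde M$ for a finitely presented $A$-module $M$, and fix a finite presentation $A^{\oplus m}\xrightarrow{\Phi}A^{\oplus n}\to M\to 0$. Then $\mathrm{Fit}_r(\mathscr F)$ is generated by the finitely many $(n-r)\times(n-r)$ minors of $\Phi$, so each $Z_r=V(\mathrm{Fit}_r(\mathscr F))$ is a \emph{finitely presented} closed subscheme of $S$; this is the only input really needed about $\mathscr F$.

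For the finite presentation of $S_r\to S$: since $\mathrm{Fit}_{r-1}\subseteq\mathrm{Fit}_r$ one has $Z_r\subseteq Z_{r-1}$ as closed subschemes, and $Z_r$ is cut out inside $Z_{r-1}$ by the image of $\mathrm{Fit}_r(\mathscr F)$, a finitely generated ideal of $\mathscr O_{Z_{r-1}}$. Hence $S_r=Z_{r-1}\setminus Z_r$ is a retrocompact open of $Z_{r-1}$, so the open immersion $S_r\hookrightarrow Z_{r-1}$ is quasi-compact, hence of finite presentation; composing with the finitely presented closed immersion $Z_{r-1}\hookrightarrow S$ gives that $S_r\to S$ is of finite presentation. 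I expect no difficulty here — it is just bookkeeping with retrocompactness and finite generation.

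For local freeness: Fitting ideals commute with base change (being computed from a presentation matrix by minors), so restricting $\mathscr F$ first to $Z_{r-1}$ replaces $\mathrm{Fit}_{r-1}(\mathscr F)$ by its image in $\mathscr O_S/\mathrm{Fit}_{r-1}(\mathscr F)$, which is zero, and restricting further to $S_r$ turns the image of $\mathrm{Fit}_r(\mathscr F)$ into the unit ideal; thus $\mathrm{Fit}_{r-1}(\mathscr F|_{S_r})=0$ and $\mathrm{Fit}_r(\mathscr F|_{S_r})=\mathscr O_{S_r}$. It then remains to invoke the algebraic criterion: a finitely presented module $N$ over a ring $C$ with $\mathrm{Fit}_{r-1}(N)=0$ and $\mathrm{Fit}_r(N)=C$ is finite locally free of rank $r$. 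Localising at a prime, $\mathrm{Fit}_r=C$ forces $N$ to be generated by $r$ elements, giving a surjection $C^{\oplus r}\twoheadrightarrow N$, and the vanishing of $\mathrm{Fit}_{r-1}$ forces this surjection to be an isomorphism after localisation; finite presentation promotes ``free at every prime'' to ``locally free''. This is precisely the content of Stacks [05P8].

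The genuine, non-formal content is exactly this last criterion — that $\mathrm{Fit}_{r-1}(N)=0$ together with $\mathrm{Fit}_r(N)=C$ yields freeness and not merely generation by $r$ elements. Everything else (base-change compatibility of Fitting ideals, the finite-presentation bookkeeping, retrocompactness of the complement of a finitely cut-out closed subscheme) is routine. In the write-up I would therefore record the two reductions above and cite Stacks [05P8] and [05P9] for the pointwise criterion.
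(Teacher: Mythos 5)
Your proof is correct and follows exactly the route the paper takes: the paper simply quotes this lemma as a combination of Stacks [05P9] (finite presentation of the strata) and [05P8] (the Fitting-ideal criterion for local freeness of rank $r$), which are precisely the two ingredients you reconstruct. The reductions you spell out (base-change compatibility of Fitting ideals, retrocompactness of $S_r$ in $Z_{r-1}$) are the standard content of those references, so there is nothing to add.
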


\subsection{Geometric realisation of coherent sheaves}

In \cite[\S 4]{bilu-howe2021motivic-statistics}
Bilu and Howe define 
a very convenient functor 
\[
\mathbf V : \mathbf{Coh}_S
\longrightarrow
\mathbf{Var}_{S-PW} 
\]
from the category $\mathbf{Coh}_S$ of coherent sheaves on $S$
to the category $\mathbf{Var}_{S-PW}$ of $S$-varieties localised at piecewise isomorphisms. 
At the level of locally
free sheaves,
this functor sends 
a locally free sheaf $\mathscr F$
to the geometric vector bundle 
\[
(\mathbf V ( \mathscr F ) \to S ) = (\Spec_S ( \Sym^\bullet ( \mathscr F^\vee ) ) \to S ) . 
\]
Using the canonical flattening stratification from \cref{lemma-flattening-stratification-of-finitely-presented-sheaves}
one can extend this operation to any coherent sheaf on $S$.
The image of a coherent sheaf is then piece-wisely isomorphic to 
\[
\mathbf A^{\dim ( \mathscr F )} \to S 
\] 
where $\dim ( \mathscr F )$ is the function $s \mapsto \dim_{\kappa ( s )} \mathscr F_s$.

\section{Picard schemes, schemes of linear systems and effective divisors}

\label{section-picard}

Following Kleiman \cite{kleiman2005picard}, we fix a locally Noetherian scheme $S$ and work in the category of locally Noetherian $S$-schemes. 
For most of our applications we will take ${S = \Spec ( \kb )}$ but working in a relative setting allows for more flexibility at some point,
for example to prove \cref{proposition-divisors-as-proj} below. 
Another reference is \cite[Chap. 8]{bosch-lutkebohmert-raynaud1990neron}.

\subsection{Some functors and representability criterions}
We recall some classical definitions \cite{kleiman2005picard}.

\begin{mydef}Let $f : X \to S $ be separated map of finite type.
The absolute Picard functor is the commutative group functor
\[
\Pic_{X} : 
( T \to S ) \longmapsto \Pic ( X \times_S T )
\]
and the relative Picard functor is the commutative group functor
\[
\Pic_{X / S} : 
( T \to S ) \longmapsto \Pic ( X \times_S T ) / \Pic ( T )  .
\]	
We thus have a morphism of functors 
\[
\Pic_X \longrightarrow \Pic_{X/ S} . 
\]
\end{mydef}

It is sometimes more convenient to work with a rigidified version 
of the previous functor.
\begin{mydef}
Assume that $f_* ( \mathscr O_X ) = \mathscr O_S$
holds universally and that $f$ admits a section $\varepsilon : S \to X$. 
For any invertible sheaf $\mathscr L $ on $X$,
a \emph{rigidification} is an isomorphism 
\[
\alpha : 	\mathscr O_S \overset{\sim}{\longrightarrow} \varepsilon^* \mathscr L . 
\] 
An isomorphism between rigidified invertible sheaves
is an isomorphism of invertible sheaves compatible with the rigidifications. 

The group functor $( P , \varepsilon )$ is the functor sending an $S$-scheme $T$ 
to the set of isomorphism classes of invertible sheaves on $X \times_S T$ rigidified along $\varepsilon_T : T \to X \times_S T$. 
\end{mydef}

\begin{myremark}
	Under the previous hypothesis, the functor $( P , \varepsilon )$ is automatically a sheaf for the Zariski topology, because rigidified line bundles do not admit non-trivial automorphisms,
	and 
	%the sequence 
	%\[
	%\begin{tikzcd}
	%	0 \rar &
	%	\Pic ( T ) 
	%	\rar 
	%	& \Pic ( T \times_S T ) 
	%	\rar &
	%	\Pic_{X/S} ( T ) \rar & 
	%	0
	%\end{tikzcd}
	%\] 
	%is exact and 
	$\Pic_{X/S} ( T ) = ( P , \varepsilon ) ( T )$ for every $S$-scheme $T$. 
\end{myremark}

Above the relative Picard functor lives a functor of effective divisors.

\begin{mydef}
The $\Div_{X/S}$ functor sends $T$
to 
\[
\Div_{X/S} ( T ) 
= 
\{ 
D \subset X \times_S T 
\text{ effective divisor that is flat over $T$}
\}
\]	
\end{mydef}

\begin{mydef}
The linear system functor is the intermediate piece:
it sends $h : T \to S $ to 
\begin{align*}
& \LinSys_{ \mathscr L / X / S } ( T )  \\
& = 
\{
\text{relative effective divisors $D$ }
\mid 
\OOO_{X\times_S T} ( D ) 
\simeq h^* \mathscr L \otimes f_T^* \mathscr N \\
& 
\qquad \text{ for some invertible sheaf $\mathscr N $ on $T$}
\}
\end{align*}
\end{mydef}

In the situation we are considering in this paper, the assumptions of the following two theorems hold.

\begin{mythm}[{\cite[Theorem 3.7]{kleiman2005picard}}]
	Assume that $X\to S$ is projective and flat.
	Then the functor
	$\Div_{X/S}$
	is representable by an open subscheme $\DDiv_{X/S}$ of the Hilbert scheme of $X$.
\end{mythm}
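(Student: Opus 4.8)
The strategy is to exhibit $\Div_{X/S}$ as an \emph{open subfunctor} of the Hilbert functor of $X/S$, which is representable by a scheme $\Hilb_{X/S}$, locally of finite type over $S$, by Grothendieck's theorem -- this is where the hypothesis that $X\to S$ is projective enters. Recall that $\Hilb_{X/S}$ carries a universal closed subscheme $W\subset X\times_S\Hilb_{X/S}$, flat over $\Hilb_{X/S}$, such that for any $S$-scheme $T$ a morphism $T\to\Hilb_{X/S}$ is the same datum as a closed subscheme $Z\subset X\times_S T$ flat over $T$ (properness over $T$ being automatic, since $X\to S$ is proper). A relative effective divisor $D\in\Div_{X/S}(T)$ is in particular such a closed subscheme: it is flat over $T$ by definition, and the divisor carries no information beyond its underlying subscheme, so $D\mapsto(D\subset X\times_S T)$ realises $\Div_{X/S}$ as a subfunctor of $\Hilb_{X/S}$. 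It therefore suffices to check that for every closed subscheme $Z\subset X\times_S T$ flat over $T$ there is an open subscheme $V\subseteq T$ such that, for every morphism $T'\to T$, the base change $Z_{T'}\subset X\times_S T'$ is a relative effective Cartier divisor over $T'$ if and only if $T'\to T$ factors through $V$; by the standard criterion for open subfunctors this produces $\DDiv_{X/S}\subseteq\Hilb_{X/S}$ as the open subscheme $V$ attached in this way to the universal $W$.

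To construct $V$, let $\mathscr I\subset\OOO_{X\times_S T}$ be the ideal sheaf of $Z$. Since $\OOO_Z$ is flat over $T$, a standard $\mathrm{Tor}$ argument shows that for every $t\in T$ the natural map $\mathscr I\otimes_{\OOO_T}\kappa(t)\to\OOO_{X_t}$ is injective with image the ideal sheaf $\mathscr I_{Z_t}$ of the fibre $Z_t$; in particular $Z_t$ is an effective Cartier divisor on $X_t$ precisely when $\mathscr I\otimes\kappa(t)$ is an invertible $\OOO_{X_t}$-module. Now the locus in $X\times_S T$ where the coherent sheaf $\mathscr I$ is locally free of rank $1$ is open (the locally free locus of a coherent sheaf is open, and the rank is locally constant on it), so its complement is a closed subset of $X\times_S T$, whose image under the proper morphism $X\times_S T\to T$ is a closed subset $C\subseteq T$; put $V=T\setminus C$. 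By construction $\mathscr I$ is invertible on the preimage $X\times_S V$, so $Z_V$ is a relative effective Cartier divisor over $V$ and all its fibres are Cartier, i.e.\ $V\subseteq\{t\in T:Z_t\text{ is an effective Cartier divisor on }X_t\}$. Conversely, if $Z_t$ is an effective Cartier divisor, then a generator of the invertible ideal $\mathscr I\otimes\kappa(t)$ near a point $x\in X_t$ lifts, by Nakayama's lemma, to a local generator $f$ of $\mathscr I$ at $x$, and the flatness of $\OOO_Z$ over $T$ forces the annihilator of $f$ to vanish in a neighbourhood of $x$ (again by Nakayama), so $\mathscr I=(f)$ is invertible near $x$; as $x$ was arbitrary, $\mathscr I$ is invertible near $X_t$ and $t\notin C$. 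Hence $V$ is exactly the fibrewise Cartier locus, $Z_V$ is a relative effective Cartier divisor, and -- invertibility of an ideal being stable under arbitrary base change, while conversely a relative effective Cartier divisor has Cartier fibres and the Cartier property descends along faithfully flat extensions of residue fields -- $Z_{T'}$ is a relative effective Cartier divisor if and only if $T'\to T$ factors through the open subscheme $V$. This proves that $\Div_{X/S}$ is an open subfunctor of $\Hilb_{X/S}$, hence representable by an open subscheme $\DDiv_{X/S}$.

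The formal parts -- identifying $\Div_{X/S}$ with a subfunctor of the Hilbert functor and invoking the open-subfunctor criterion -- are routine. The substance of the argument, and the step I expect to be the main obstacle, is the local lemma of the second paragraph: upgrading ``$Z$ has effective Cartier fibres'' to ``$Z$ is a relative effective Cartier divisor'', and showing that this is an \emph{open} (and not merely constructible) condition on the base. The sensitive ingredients there are the flat base-change identity $\mathscr I\otimes\kappa(t)=\mathscr I_{Z_t}$, the use of properness of $X\times_S T\to T$ to push a closed subset of the total space down to a closed subset of $T$, and the two Nakayama arguments -- lifting a fibrewise generator of $\mathscr I$ and showing it is a non-zero-divisor -- the second of which is exactly where the flatness of $Z$ over $T$ is needed; see \cite{kleiman2005picard} or \cite{stacks-project} for the precise statements. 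All of this is local and Noetherian, which causes no trouble since $S$, and therefore $\Hilb_{X/S}$ and $X\times_S\Hilb_{X/S}$, are locally Noetherian throughout this section.
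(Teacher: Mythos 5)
The paper does not prove this statement; it imports it wholesale from Kleiman's notes on the Picard scheme, and your argument is in substance a reconstruction of Kleiman's proof: realise $\Div_{X/S}$ as an open subfunctor of the Hilbert functor (representable since $X\to S$ is projective), the real content being the local lemma that for a closed subscheme $Z\subset X\times_S T$ flat over $T$ the fibrewise-Cartier locus in $T$ is open and $Z$ restricts to a relative effective divisor over it. The overall structure, the base-change identity $\mathscr I\otimes\kappa(t)=\mathscr I_{Z_t}$ via the vanishing of $\mathrm{Tor}_1$, and the use of properness to push the non-invertible locus down to a closed subset of $T$ are all correct.

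There is, however, one step whose stated justification is insufficient, and it is precisely the step you yourself identify as the crux. You assert that ``the flatness of $\mathscr O_Z$ over $T$ forces the annihilator of $f$ to vanish near $x$.'' Flatness of $Z$ over $T$ alone does not give this. Take $T=\Spec\bigl(k[t]_{(t)}\bigr)$, $X'=\Spec\bigl(k[t]_{(t)}[x]/(tx)\bigr)$ and $Z=V(x)$: then $Z\simeq T$ is flat over $T$ and its fibre over the closed point is the Cartier divisor $V(x)\subset\Spec\bigl(k[x]\bigr)$, yet $tx=0$, so $x$ is a zerodivisor and $Z$ is not a relative effective divisor. What rescues the argument is the hypothesis of the theorem that you never actually invoke: $X\to S$ is \emph{flat}, hence so is $X\times_S T\to T$, and the slicing criterion (ambient local ring flat over the base and $\bar f$ a nonzerodivisor on the fibre imply $f$ is a nonzerodivisor and the quotient is flat) then applies. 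With the flatness of the ambient family inserted at that point, your proof is the standard one and is complete.
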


\begin{mythm}[{\cite[Theorem 4.8]{kleiman2005picard}}]
	\label{thm-existence-criterion-picard}
	Assume that $X \to S $ is projective Zariski locally over $S$, and is flat with integral geometric fibres. 
	Then $\Pic_{X / \kb}$
	is representable by a $\kb$-scheme 
	$\PPic_{X / \kb}$.
\end{mythm}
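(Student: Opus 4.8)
The statement we need to establish is \cref{thm-existence-criterion-picard}: under the hypothesis that $X \to S$ is projective Zariski-locally over $S$ and flat with integral geometric fibres, the relative Picard functor $\Pic_{X/\kb}$ is representable by a $\kb$-scheme $\PPic_{X/\kb}$. The plan is to follow Grothendieck's original FGA approach, as presented in \cite[Theorem 4.8]{kleiman2005picard}, building the representing scheme out of the scheme of divisors $\DDiv_{X/S}$ that we already have at our disposal by \cite[Theorem 3.7]{kleiman2005picard}. First I would reduce, using the rigidified functor $(P,\varepsilon)$ from the preceding subsection, to the case where $X\to S$ admits a section and $f_*\mathscr O_X = \mathscr O_S$ holds universally: since the hypotheses guarantee integral geometric fibres, the latter can be arranged after a finite flat base change, and one checks that $\Pic_{X/S}$ is a sheaf for the fppf (indeed étale) topology so that representability descends; this is where the positive-characteristic subtleties about radicial morphisms are harmless because we are only building a scheme, not identifying classes.

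\textbf{Main construction.} The heart of the argument is to exhibit $\Pic_{X/S}$ as a quotient of (an open subscheme of) $\DDiv_{X/S}$ by the equivalence relation ``linear equivalence of divisors''. Concretely: choose a relatively very ample sheaf $\mathscr O_X(1)$; for large $n$, every rigidified line bundle $\mathscr L$ on a fibre, after twisting by $\mathscr O(n)$, becomes generated by global sections with vanishing higher cohomology, hence is of the form $\mathscr O_X(D)$ for an effective relative divisor $D$. This says the Abel--Jacobi morphism $\DDiv_{X/S} \to \Pic_{X/S}$ (sending $D \mapsto [\mathscr O_X(D)]$, which already appeared in \cref{section-picard} for curves) is, after restricting to the appropriate flattening stratum coming from \cref{lemma-flattening-stratification-of-finitely-presented-sheaves}, surjective onto each ``bounded piece'' of the Picard functor. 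The fibres of this morphism over a $T$-point are the projective spaces $\mathbf P(f_{T*}(\mathscr L \otimes \mathscr O(n)))$ of the linear systems; thus the key step is to show that the linear-system functor $\LinSys_{\mathscr L/X/S}$ is representable (it is a projective bundle over the base once $f_*$ of the relevant sheaf is locally free, which holds after passing to the flattening stratification), and then to apply a descent/quotient theorem — Grothendieck's criterion that an fppf equivalence relation with flat projective ``graph'' over one factor admits a quotient scheme — to pass from $\DDiv$ to $\Pic$. Finally one glues the bounded pieces (indexed by Hilbert polynomials, equivalently by connected components and degrees) to obtain the full $\PPic_{X/\kb}$.

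\textbf{Main obstacle.} The delicate point is the quotient step: one must verify that linear equivalence, realised as the image of $\DDiv_{X/S} \times_{\Pic} \mathbf{P}(\cdots)$, is an \emph{effective} fppf equivalence relation, i.e. that the quotient exists as a scheme and not merely an algebraic space. This is exactly where projectivity of $X\to S$ and the integrality of the geometric fibres are used in an essential way — the former gives the boundedness needed to stratify by Hilbert polynomial and to get the projective-bundle structure on linear systems, the latter ensures $H^0(X_{\bar s},\mathscr O) = \bar\kappa(s)$ so that rigidifications kill all automorphisms and the functor $(P,\varepsilon)$ is genuinely rigid. I would cite \cite[Theorem 4.8]{kleiman2005picard} for the precise form of the quotient argument (Grothendieck's theorem on quotients by flat projective equivalence relations) rather than reproving it, since in our applications $S = \Spec(\kb)$ and $X = \CCC$ is a smooth projective curve, for which the existence of $\PPic_{\CCC/\kb}$ is entirely classical; the relative statement is included only for the flexibility invoked in \cref{proposition-divisors-as-proj}, and its proof is identical modulo the bookkeeping just sketched.
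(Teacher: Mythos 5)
The paper does not prove this statement at all: it is imported verbatim as \cite[Theorem 4.8]{kleiman2005picard} and used as a black box (in practice only for $S=\Spec(\kb)$ and $X=\CCC$ a smooth projective curve, where the existence of $\PPic_{\CCC/\kb}$ is classical). Your sketch is therefore not competing with an argument in the paper; it is an outline of the standard FGA/Kleiman proof that the citation points to, and as such it identifies the right ingredients in the right order: rigidification, the Abel map $\DDiv_{X/S}\to\Pic_{X/S}$, twisting by $\mathscr O(n)$ to make every class effective with vanishing higher cohomology, the projective-bundle structure of linear systems over the flattening strata, Grothendieck's quotient/descent step, and gluing the pieces bounded by Hilbert polynomial.

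Two small corrections to your reduction step. First, $f_*\mathscr O_X=\mathscr O_S$ holding universally is automatic from properness, flatness and integral geometric fibres; what may require a base change (étale or fppf surjective, not merely finite flat) is the existence of a section $\varepsilon$. Second, and relatedly, the functor that Kleiman's Theorem 4.8 actually represents is the étale (equivalently fppf) sheafification $\Pic_{(X/S)(\text{ét})}$ of the naive relative functor $T\mapsto\Pic(X\times_S T)/\Pic(T)$; the two agree only after one has a section (or in the cases the paper uses). The paper's statement elides this, and your sketch should make explicit that the descent of representability is along the sheafification, not merely a convenience. Neither point affects the applications in \cref{section-picard} or \cref{proposition-divisors-as-proj}, but if you were to write the argument out rather than defer to the reference, these are the places where the bookkeeping matters.
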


\begin{mydef}[{\cite[Definition 4.6]{kleiman2005picard}}]
    The Abel 
    (or Abel-Jacobi) map
    is the map of functors 
    \[
    \Div_{X/S} ( T ) 
    \to \Pic_{X/S} ( T )
    \]
    sending a relative effective divisor $D$ on $X_T / T$
    to its associated sheaf $\mathscr O_{X_T} ( D ) $. 
    In case these functors are representable, 
    then there is a corresponding morphism of schemes 
    \[
    \AJ_{X/S}
    :
    \DDiv_{X/S}
    \to 
    \PPic_{X/S} . 
    \]
\end{mydef}

\subsection{Poincaré sheaves for the Picard scheme}

From now on we assume that $\PPic_{X/S} $ exists.

\begin{mydef}
	A \emph{Poincaré sheaf} 
	$\mathscr P$
	is an invertible sheaf on $ X \times \PPic_{X/S}$ 
	satisfying the following universal property:
	for any $g : T\to S$
	and any invertible sheaf $\mathscr L$ on $Y_T$ 
	there exists a unique 
	\[
	h : T \to  \PPic_{X/S} 
	\]
	such that 
	\[
	\mathscr L 
	\simeq 
	( \mathrm{id}_Y , h )^* \mathscr P
	\otimes_{\mathscr O_{Y_T}} g_T^* \mathscr N
	\]
	for some invertible sheaf $\mathscr N$ on $T$.
\end{mydef}

\begin{myremark}
	If $\PPic_{X/S}$ exists as a scheme, 
	then a Poincaré sheaf exists: it is given by 
	$\mathrm{id} : \PPic_{X/S} \to \PPic_{X/S}$,
	viewed as a $\PPic_{X/S}$-point 
	of $\PPic_{X/S}$ and corresponding to the isomorphism class of a certain invertible sheaf on $X \times_S \PPic_{X/S}$.
\end{myremark}

By \cite[7.7.6]{EGA3-II} if $f ' : X ' \to S ' $ is a proper morphism 
with $S'$ locally Noetherian
and 
$\mathscr F$ is a coherent $\mathscr O_{X'}$-module,
assumed to be flat on $S'$,
then 
there exists a coherent $\mathscr O_{S'}$-module $\mathscr Q$ 
on $S'$
and an isomorphism of functors in the quasi-coherent $\OOO_{S'}$-module $\mathscr M$
\[
\mathscr{H}om_{\OOO_{S'}}
( \mathscr Q , \mathscr M )
\longrightarrow 
f'_* ( \mathscr F \otimes_{\mathscr O_{X'}} (f')^* \mathscr M )
\]
inducing in particular an isomorphism of functors
\[
\Hom_{\OOO_{S'}}
( \mathscr Q , \mathscr M )
\longrightarrow 
H^0 ( X ' , \mathscr F \otimes_{\mathscr O_{X'}} (f')^* \mathscr M ) .
\]
In particular, 
if $\mathbf V ( \mathscr Q )$ denotes the \emph{total space of $\mathscr Q$},
that is to say the scheme associated to the symmetric $\mathscr O_{S'}$-algebra $\mathscr{S}\mathit{ym}_{\mathscr O_{S'}} ( \mathscr Q )$,
one gets an isomorphism 
\begin{equation}
\label{equation-caracterisation-of-V(Q)}
H^0 ( X' \times_{S'} T' ,  \mathscr F \otimes_{\OOO_{S'}} \OOO_{T'} ) 
\overset{\sim}{\longrightarrow}
\Hom_{\OOO_{S'}} ( \mathscr Q , \mathscr O_{T'} )
\overset{\sim}{\longrightarrow}
\Hom_{S'} ( T ', \mathbf V_{S'} ( \mathscr Q ) )
\end{equation}
functorial in the scheme $T' \to S'$ corresponding to the $\mathscr O_{S'}$-module $\mathscr M = \mathscr O_{T'} $. 

\begin{myremark}
In general, one would like to apply this to 
a Poincaré sheaf on 
\[
X ' = X \times_\kb \PPic_{X/S}
\]
relatively to $S ' = \PPic_{X/S}$
but there is \textit{a priori} no reason for a universal sheaf 
on $X \times_\kb \PPic_{X/S}$ to be flat above $ \PPic_{X/S}$.
This is actually known to be false: if $X = \CCC$ is a curve of genus $g>1$,
above the component parametrising invertible sheaves of degree $n=2g-2$
one can show 
via Serre duality
that for a point $x \in \PPic_{\CCC / \kb}^{2g-2}$ we have 
\[
h^0 ( \CCC \times \{ x \} , \mathscr P_x ) = n + 1 - g = g - 1.
\]
if $\mathscr P_x $ is not isomorphic to $\omega_\CCC$
while 
\[
h^0 ( \CCC \times \{ x \} , \omega_\CCC ) = n + 2 - g = g 
\]
if $\mathscr P_x \simeq \omega_\CCC$,
thus contradicting flatness of the push-forward of $\mathscr P$ to $\PPic_{\CCC / \kb}$. 
\end{myremark}

So the claim from \cite[Exercise 4.7]{kleiman2005picard} can be corrected using the flattening stratification from
 \cref{lemma-flattening-stratification-of-finitely-presented-sheaves}. 
Here, we write $\mathscr Q$ for a collection of coherent sheaves on a stratification of $\PPic_{X / S}$
obtained by flattening the push-forward of the Poincaré sheaf $\mathscr P$.  
Then, \eqref{equation-caracterisation-of-V(Q)} specialises to
\begin{equation}\label{equation-caracterisation-of-V(Q)-specialised}
H^0 ( X \times_S T ,  \mathscr L )
\overset{\sim}{\longrightarrow}
\Hom_S ( T , \mathbf V_{\PPic_{X / S }} ( \mathscr Q ) ) 
\end{equation}
factorially in $T \to \PPic_{X / \kb}$,
assuming that it 
factorises through one of the strata and
corresponds to an invertible sheaf $\mathscr L$ on $X \times_S T$.  

\begin{myptn}[{\cite[Exercise 4.7]{kleiman2005picard}, corrected}]
\label{proposition-divisors-as-proj}
Assume that a Poincaré sheaf $\mathscr P$ exists.
	Then $\AJ_{X/S} : \DDiv_{X/S} \to  \PPic_{X/S}$
	represents the functor 
	\[
	\LinSys_{\mathscr P / ( X \times_S \PPic_{X/S} ) / \PPic_{X/S}}. 
	\]
As schemes piecewisely defined above $\PPic_{X/S}$, 
\[
( \PP ( \mathscr Q ) \to \PPic_{X/S} ) 
\simeq 
 ( \AJ_{X/S}
    :
    \DDiv_{X/S}
    \to 
    \PPic_{X/S} ) .
\]
\end{myptn}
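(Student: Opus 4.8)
The plan is to prove the two assertions in turn: the first by unwinding the definitions through the universal property of the Poincaré sheaf, the second by applying the representability isomorphism \eqref{equation-caracterisation-of-V(Q)-specialised} stratum by stratum.

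For the first assertion, I would fix a morphism $h \colon T \to \PPic_{X/S}$, write $f_T \colon X \times_S T \to T$ for the projection, and use the identification $(X \times_S \PPic_{X/S}) \times_{\PPic_{X/S}} T = X \times_S T$. A $T$-point of $\DDiv_{X/S}$ lying over $h$ through $\AJ_{X/S}$ is, by definition, a relative effective divisor $D \subset X \times_S T$, flat over $T$, with $\AJ_{X/S}(D) = h$, that is, with the class of $\OOO_{X\times_S T}(D)$ in $\Pic_{X/S}(T)$ equal to $h$. The key point is that, by the defining universal property of $\mathscr P$, this last condition is equivalent to the existence of an invertible sheaf $\mathscr N$ on $T$ with $\OOO_{X\times_S T}(D) \simeq (\mathrm{id}_X, h)^*\mathscr P \otimes f_T^*\mathscr N$ — which is precisely the condition carving out $\LinSys_{\mathscr P/(X\times_S\PPic_{X/S})/\PPic_{X/S}}(T)$ among the relative effective divisors on $X\times_S T$. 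Since this dictionary is functorial in $T$, it exhibits $\AJ_{X/S}$ as the representing object of the linear system functor of $\mathscr P$.

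For the second assertion, I would restrict to one stratum of the flattening stratification of $\PPic_{X/S}$ attached to the push-forward of $\mathscr P$, where by construction $\mathscr Q$ is locally free; passing to such strata is exactly the correction needed to \cite[Exercise 4.7]{kleiman2005picard}, since otherwise the push-forward of $\mathscr P$ need not be flat (as the genus $>1$, degree $2g-2$ example recalled above shows). Over such a stratum I would use \eqref{equation-caracterisation-of-V(Q)-specialised}, and more precisely the isomorphism of functors $\HHom_{\OOO_{S'}}(\mathscr Q, -) \xrightarrow{\sim} f'_*(\mathscr P \otimes (f')^*(-))$ from \cite{EGA3-II} evaluated at an arbitrary invertible sheaf $\mathscr N$ on $T$ rather than at $\OOO_T$, to identify global sections of $(\mathrm{id}_X, h)^*\mathscr P \otimes f_T^*\mathscr N$ over $X\times_S T$ with homomorphisms $\mathscr Q_T \to \mathscr N$. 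A point of $\LinSys_{\mathscr P/\ldots}(T)$ is the same, up to isomorphism, as a pair $(\mathscr N, s)$ where $s$ is a section of $(\mathrm{id}_X, h)^*\mathscr P \otimes f_T^*\mathscr N$ whose zero locus is a relative effective divisor; under the identification above $s$ becomes a homomorphism $\mathscr Q_T \to \mathscr N$, and the condition that $s$ cut out such a divisor should translate into surjectivity of that homomorphism. Since a pair $(\mathscr N, \mathscr Q_T \twoheadrightarrow \mathscr N)$ up to isomorphism is precisely a $T$-point of $\PP(\mathscr Q) = \Proj_{\PPic_{X/S}}(\Sym^\bullet \mathscr Q)$ over $T \to \PPic_{X/S}$, combining this with the first assertion identifies the $T$-points of $\PP(\mathscr Q)$ and of $\DDiv_{X/S}$ over each stratum, compatibly with the projections to $\PPic_{X/S}$ — which is exactly the claimed piecewise isomorphism.

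The step I expect to be the main obstacle is the translation, on a fixed stratum, between ``$s$ cuts out a relative effective divisor flat over $T$'' and ``the associated map $\mathscr Q_T \to \mathscr N$ is surjective''. Fibrewise, surjectivity at $t \in T$ merely says the induced section of the line bundle on $X_t$ is nonzero; to upgrade this to ``its vanishing scheme is an effective Cartier divisor'' I would invoke the integrality of the geometric fibres of $X \to S$ — the standing hypothesis of \cref{thm-existence-criterion-picard} — so that a nonzero section on a fibre is automatically a nonzerodivisor, and then deduce flatness of $D$ over $T$ from the fact that $s$ is fibrewise a nonzerodivisor together with flatness of $X \to S$. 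Beyond this, the only care required is bookkeeping: choosing the flattening stratification uniformly so that cohomology and base change (in the form of \eqref{equation-caracterisation-of-V(Q)-specialised} with a twist) holds on each stratum, and fixing the convention for $\PP(-)$ so that $\DDiv_{X/S}$ specialises fibrewise to the complete linear system.
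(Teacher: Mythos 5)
Your proposal is correct and follows exactly the route the paper intends: the paper gives no written proof of this proposition, deferring to Kleiman's Exercise 4.7 together with the flattening-stratification correction and the isomorphism \eqref{equation-caracterisation-of-V(Q)-specialised} discussed immediately before the statement, and your two steps (the universal property of $\mathscr P$ for the first assertion, then the $\mathscr Q$-representability plus the surjection-versus-relative-effective-divisor dictionary on each stratum for the second) are precisely the fleshed-out version of that argument. The point you flag as delicate — upgrading fibrewise nonvanishing of the section to flatness of its zero scheme via integrality of the geometric fibres — is indeed the one nontrivial ingredient, and you handle it correctly.
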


\begin{myexample}
In the case of our curve $\CCC$, 
since $H^2 ( \CCC , \OOO_\CCC ) = 0$, 
the Picard scheme $\PPic_{\CCC / \kb}$ is smooth. 
	Let us assume moreover 
that our smooth projective curve $\CCC$ 
admits a $\kb$-divisor $D_1$ of degree one. 
Then 
\[
\PPic_{\CCC / \kb}
=
\coprod_{d\in \ZZ} \PPic_{\CCC / \kb}^d 
\]
where each $\PPic_{\CCC / \kb}^d$ is isomorphic to $\PPic_{\CCC / \kb}^0$
via $[\LLL]\mapsto [\LLL \otimes \mathcal O_\CCC ( - d D_1 )]$. 

When $d>2g-2$, then the restriction of $\mathscr P$ to $\PPic_{\CCC / \kb}^d$ is locally free of rank $n-g+1$. 
It is not necessarily locally free for small degrees
but for each $d \leqslant 2g-2$ we can still find a stratification of $\Pic^0 ( \CCC )$
into locally closed subsets 
such that on each stratum, the restriction of $\PPP$ is locally free,
by \cref{lemma-flattening-stratification-of-finitely-presented-sheaves}. 
\end{myexample}

\subsection{Motivic Euler products on $\CCC$ as motivic functions on $\DDiv_{\CCC / \kb}$}

In this last subsection, let us assume for a moment that $ \XXX =  ( X_m )_{m\in M}$
is a family of elements of $\KVar{\CCC}$ or $\MMM_\CCC$
indexed by $M = \NN^* = \NN \setminus \{ 0 \}$. 
It is conceptually important but also convenient for us to interpret the entire family of coefficients (minus the constant)
of the motivic Euler product on $\CCC$
associated to the family $\XXX$  
\[
\prod_{x\in \CCC / \kb}
\left ( 
1 
+ 
\sum_{m\in \NN^*} X_m 
\right )
=
\sum_{\mu} 
	\left 
	[
	\Sym_{\CCC / \kb}^\mu ( \mathscr X )_* 
	\to \Sym_{/ \kb}^\mu ( \CCC )_*  
	\right 
	]
	\TT^\pi 
\]
as a motivic function $\Sym_{\CCC / \kb}^\bullet ( \mathscr X )_*$  
on the scheme $\DDiv_{\CCC / \kb}$ of effective divisors on $\CCC$, via the identification
\[
\DDiv_{\CCC / \kb}^d 
\simeq 
\Sym_{/ \kb}^d ( \CCC ) 
\]
available for every $d\in \NN^*$,
see Proposition 6.3.9 of \cite[p.~437]{deligne1973cohomologie}.
In particular, 
it is compatible with the addition law:
for every $d_1 , d_2 \in \NN^*$ such that $d_1 + d_2 = d$ 
the diagram 
\[
\begin{tikzcd}
\DDiv_{\CCC / \kb}^{d_1} \times_\kb \DDiv_{\CCC / \kb}^{d_2} \rar \dar["+"] & \Sym_{\kb}^{d_1} ( \CCC ) \times_\kb \Sym_{\kb}^{d_2} ( \CCC ) \dar \\
\DDiv_{\CCC / \kb}^d \rar & \Sym_{\kb}^d ( \CCC )
\end{tikzcd}
\]
is commutative 
by \cite[Lemma 6.3.9.1]{deligne1973cohomologie}.

Having this in mind, it turns out that the relation 
\[
\prod_{x\in \CCC / \kb}
\left ( 
1 
+ 
\sum_{m\in \NN^*} X_m 
\right )
\prod_{x\in \CCC / \kb}
\left ( 
1 
+ 
\sum_{m\in \NN^*} X_m ' 
\right )
=
\prod_{x\in \CCC / \kb}
\left ( 
\left ( 
1 
+ 
\sum_{m\in \NN^*} X_m 
\right )
\left ( 
1 
+ 
\sum_{m\in \NN^*} X_m '
\right )
\right )
\]
can be interpreted 
as a relation between two motivic functions on $\DDiv_{\CCC / \kb}$:
\[
\left [
\begin{tikzcd}
	\Sym_{\CCC / \kb}^\bullet ( \mathscr X )_* \boxtimes \Sym_{\CCC / \kb}^\bullet ( \mathscr X ')_* \dar \\
	\DDiv_{\CCC / \kb } \times_\kb \DDiv_{\CCC / \kb } \dar["+"]\\
	\DDiv_{\CCC / \kb }
\end{tikzcd}
\right ]
=
\left [
\begin{tikzcd}
\Sym_{\CCC / \kb}^\bullet ( \mathscr X \boxtimes \mathscr X ' )_* \dar \\
	\DDiv_{\CCC / \kb } 
	\end{tikzcd}
\right ]
\]
This operation is compatible with the tensor product on the associated invertible sheaves thanks to the commutativity of the diagram
\begin{equation}
	\begin{tikzcd}
\DDiv_{\CCC  / \kb } \times_\kb \DDiv_{\CCC / \kb } \rar["+"] \dar["{(\AJ_{\CCC / \kb} , \AJ_{\CCC / \kb})}"] & 	\DDiv_{\CCC  / \kb } \dar["\AJ_{\CCC / \kb}"] \\
\PPic_{\CCC / \kb } \times_\kb \PPic_{\CCC / \kb } 
\rar["\otimes"] & \PPic_{\CCC / \kb } .
\end{tikzcd}
\end{equation}
Given a non-empty finite set $\mathfrak I$, this interpretation can be easily generalised to $\CCC$-families
indexed by $\NN^\mathfrak I \setminus \{ \mathbf 0 \}$.

%%%%%%%%%%%%%%%%%%%%%%%%%%%%%%%%%%%%%%%%%%%%%%%%
%%%%%%%%%%%%%%%%%%% SECTION %%%%%%%%%%%%%%%%%%%%
%%%%%%%%%%%%%%%%%%%%%%%%%%%%%%%%%%%%%%%%%%%%%%%%

\section{Motivic Batyrev-Manin-Peyre principle}

\label{section-motivic-BMP}

In this section we recall the statements
of the motivic Batyrev-Manin-Peyre principle 
from \cite{faisant2023motivic-distribution},
adapting it to our simpler isotrivial setting.

\subsection{Motivic Tamagawa numbers}

The following Lemma is a consequence of \cref{lemma:convergence-criterion-Z-coeff-multivariable}. 

\begin{mylemma}
	\label{lemma-convergence-motivic-tamagawa-dimension}
	Let $V$ be a $\kb$-variety such that 
	\[
	\left ( 1 - \LL_\kb \right )^{\rk ( \Pic ( V  )) }
	\frac{[V]}{\LL_\kb^{\dim ( V )}} 
	\in 
	1 + \mathcal F^{-2} \MMM_\kb. 
	\]
	Then the motivic Euler product 
	\[
	\prod_{p\in \CCC}
	\left ( 1 - \LL_p^{-1} \right )^{\rk ( \Pic ( V  )) }
	\frac{[V\times_\kb \kappa ( p ) ]}{\LL_p^{\dim ( V )}}
	\]
	converges in $\widehat{\mathscr M_\kb}^{\dim}$.
\end{mylemma}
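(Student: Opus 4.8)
The plan is to reduce the convergence of the motivic Euler product to an application of \cref{lemma:convergence-criterion-Z-coeff-multivariable}. First I would expand each local Euler factor. Writing $\rho = \rk(\Pic(V))$ and using the hypothesis, over each closed point $p \in \CCC$ with residue field $\kappa(p)$ we have
\[
\left(1 - \LL_p^{-1}\right)^{\rho} \frac{[V \times_\kb \kappa(p)]}{\LL_p^{\dim(V)}} = 1 + \varepsilon_p,
\]
where $\varepsilon_p \in \mathcal F^{-2}\MMM_{\kappa(p)}$; the point is that the base change of the hypothesis from $\kb$ to $\kappa(p)$ preserves the filtration level, since base change does not increase virtual dimension. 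So each local factor is $1$ plus a term of virtual dimension at most $-2$ (relative to the point), i.e.\ of the form $1$ plus something divisible by $\LL_p^{-2}$ times a class. The first step is therefore to make precise, using the interpretation of classes over $\CCC$ as motivic functions (the Remark and Lemma after the definition of $\KVar{S}$) together with the geometric-realisation formalism of \cref{section-geometric-realisation-coh-sheaves}, that this family of local data assembles into a power series $P(u)$ over $\CCC$ with $\ZZ$-coefficients — or rather coefficients in $\KVar{\CCC}$ — lying in $1 + u^2 \cdot (\text{power series})$, so that the single-variable case of \cref{lemma:convergence-criterion-Z-coeff-multivariable} applies (taking $\AAA$ a singleton and reading the bound $|\ee|\dim(X)/2$ there as $d \cdot \dim(\CCC)/2 = d/2$ since $\dim \CCC = 1$; here the relevant $X$ in that lemma is $\CCC$).

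Next I would invoke the structure of motivic Euler products over $\CCC$ recalled in \cref{def-mot-Eul-prod} and the discussion in the subsection on ``Motivic Euler products on $\CCC$ as motivic functions on $\DDiv_{\CCC/\kb}$'': the degree-$d$ coefficient of $\prod_{p \in \CCC}(1+\varepsilon_p)$ is a sum of symmetric products $\Sym^\varpi_{\CCC/\kb}(\cdots)_*$ over partitions $\varpi$ of $d$ all of whose parts have size $\geqslant 2$, because $\varepsilon_p$ has no contribution in degree $0$ or $1$ (the degree-$1$ term of the local factor cancels by the hypothesis $(1-\LL)^\rho [V]/\LL^{\dim V} \in 1 + \mathcal F^{-2}$, which is exactly the ``no $u_{\alpha_1}$ alone, only $u_{\alpha_1}u_{\alpha_2}$'' shape needed in \cref{lemma:convergence-criterion-Z-coeff-multivariable}). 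Summing the dimension bound $\sum_i n_i \dim(\CCC) \leqslant \sum_i \frac{|i|}{2} n_i \dim(\CCC) = \frac{d}{2}$ from that lemma, the degree-$d$ coefficient has virtual dimension $\leqslant d/2$, and hence its image in $\mathcal F^{-m}\MMM_\kb$ for $d$ large; thus the partial sums form a Cauchy sequence for the dimension filtration and the product converges in $\widehat{\mathscr M_\kb}^{\dim}$.

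The main obstacle I anticipate is bookkeeping rather than conceptual: one must carefully justify that the family $(\varepsilon_p)_{p}$ is genuinely a \emph{family of classes over $\CCC$} in the sense required to form the motivic Euler product — i.e.\ that the local factors, which are defined point-by-point from a single $\kb$-class via base change, are the fibres of an honest element of $\KVar{\CCC}$ (equivalently $\MMM_\CCC$) — and that its filtration level $-2$ is uniform in $p$. This is exactly the kind of statement handled by the evaluation/determination lemma (if a motivic function vanishes fibrewise it vanishes) combined with $\mathbf V(\mathscr F)$-type constructions, but it requires expressing $(1-\LL_\CCC^{-1})^\rho [V\times_\kb\CCC]/\LL_\CCC^{\dim V}$ as a well-defined element of $\MMM_\CCC$ and checking it lies in $1 + \mathcal F^{-2}\MMM_\CCC$, after which the fibrewise computation above finishes the argument. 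A minor additional point is that $(1-\LL_p^{-1})^{-\rho}$ should be interpreted as an element of the completed localised ring — expanding $(1-\LL_p^{-1})^{\rho}$ as a polynomial in $\LL_p^{-1}$ is harmless and keeps everything inside $\KVar{\CCC}[\LL_\CCC^{-1}]$ — so no completion subtlety intervenes before taking the Euler product.
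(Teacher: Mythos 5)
Your strategy is the paper's strategy: the paper's entire proof of this lemma is the one sentence preceding it (``a consequence of \cref{lemma:convergence-criterion-Z-coeff-multivariable}''), and you have reconstructed what that citation is meant to do — write the local factor as $1+\varepsilon_p$ with $\varepsilon_p$ of virtual dimension $\leqslant -2$, observe that the degree-$d$ coefficient of the Euler product is a sum of symmetric products over partitions all of whose parts have size $\geqslant 2$, and conclude by a dimension count after substituting $u=\LL^{-1}$.

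There is, however, a concrete gap in your dimension bookkeeping. \cref{lemma:convergence-criterion-Z-coeff-multivariable} is stated for series with coefficients in $\ZZ$, and its bound $\sum_i n_i\dim(X)$ for a symmetric product uses that the coefficients have relative dimension $0$. Once you expand the local factor as $P(u)=1+\sum_{i\geqslant 2}a_iu^i$ with $a_i\in\KVar{\CCC}$ (which is unavoidable as soon as $[V]$ is not a polynomial in $\LL$), the dimension of $\Sym^{n_i}_{\CCC/\kb}(a_i)$ is $n_i\bigl(\dim\CCC+\dim_\CCC(a_i)\bigr)$, not $n_i\dim\CCC$; the hypothesis only gives $\dim_\CCC(a_i)\leqslant i-2$, so the bound $d/2$ you quote for the degree-$d$ coefficient is not justified and is false in general. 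The repair is to run the count with the twist included: each factor contributes $\dim\bigl(\Sym^{n_i}_{\CCC/\kb}(a_i)\,\LL^{-in_i}\bigr)\leqslant n_i(1+(i-2)-i)=-n_i$, so the degree-$d$ term after substitution has dimension at most $-\sum_i n_i$; one then uses that the expansion of $\varepsilon$ is \emph{finite} (say $a_i=0$ for $i>D$), forcing $\sum_i n_i\geqslant d/D\to\infty$, whence convergence. Relatedly, your closing step is stated backwards: a bound $\dim\mathfrak a_d\leqslant d/2$ that grows with $d$ puts $\mathfrak a_d$ in a \emph{larger} piece of the filtration and gives no Cauchy property by itself; what closes the argument is that $\dim(\mathfrak a_d\LL^{-d})\to-\infty$ after evaluating $u=\LL^{-1}$, which is exactly what the paper's definition of absolute convergence for $|t|<\LL^{-\alpha}$ encodes. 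Neither point changes the strategy, but as written the proof does not close without them.
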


\begin{mydef}
	Assume that $\CCC$ admits a $\kb$-divisor of degree one
	and that $V$ satisfies the assumption of the previous lemma.
	The motivic Tamagawa number of $V \times_\kb \CCC \to \CCC$ is defined as 
    \[
    \tau ( V / \mathscr C )
    =
    \left ( \res_{t=\LL^{-1}} Z^\Kapr_\CCC ( t ) \right )^{\rk ( \Pic ( V  )) }
 	\prod_{p\in \CCC}
	\left ( 1 - \LL_{\kappa ( p )}^{-1} \right )^{\rk ( \Pic ( V  )) }
	\frac{[V\times_\kb \kappa ( p ) ]}{\LL_{\kappa ( p )}} 
    \]
    where $ Z^\Kapr_\CCC ( t )$ is the Kapranov zeta function of $\CCC$ defined as 
    \[
     Z^\Kapr_\CCC ( t )
     =
     \sum_{d\in \NN} \left [
     \Sym_{/\kb}^d ( \CCC ) 
     \right ] t^d . 
    \]
\end{mydef}

\begin{myremark}
	Compared to the definition given in \cite{faisant2023motivic-distribution} 
	we normalised by a factor $\mathbf L_\kb^{\dim ( V ) ( 1 - g ( \CCC )  )}$. 
\end{myremark}

\subsection{Motivic stabilisation of curves}

It is well-known 
\cite{debarre2001higher}
that
for every ${\mdeg \in \Pic ( V )^\vee}$
 the dimension of 
$ \HHom^\mdeg_\kb ( \CCC , V ) $
is bounded bellow by the quantity
\[
\mdeg \scdot \omega_V^{-1} + ( 1 - g )\dim ( V ) 
\]
where $g$ is the genus of $\CCC$.

\medskip 

Let $V$ be a (geometrically irreducible) variety over $\kb$ 
such that 
\begin{itemize}
	\item $ V ( \kb ( \CCC ) ) $ is Zariski-dense in $V$; 
\item both cohomology groups $H^1 ( V , \OOO_V ) $ and $ H^2 ( V , \OOO_V ) $ are trivial;
\item the Picard group of $V$ coincides with its geometric Picard group and has no torsion;
\item its geometric Brauer group is trivial;
\item the class of the anticanonical line bundle of $V$ lies in the interior of the effective cone $\CEff ( V ) $, which itself is rational polyhedral. 
\end{itemize}
In this article, it will be enough to assume that $V$ satisfies the assumption of \cref{lemma-convergence-motivic-tamagawa-dimension}. 
In general, one needs a filtration coming from the weight in mixed Hodge theory, see for example \cite[\S 3.1]{faisant2023motivic-distribution}. 

\begin{myquest}[Motivic Batyrev-Manin-Peyre \cite{faisant2023motivic-distribution} -- isotrivial case]
\label{question-motivic-BMP-isotrivial}
	Assume that $\CCC$ admits a $\kb$-divisor of degree one.
	Do we have
\begin{equation}
	\left [
	 \HHom^\mdeg_\kb ( \CCC , V ) 
	\right ]
	\LL_\kb^{- \mdeg \scdot \omega_V^{-1} - ( 1 - g )\dim ( V )  } 
	\longrightarrow 
	 \tau ( V / \mathscr C )
\end{equation}	
	as $\mdeg \in \Pic ( V )^\vee \cap \CEff ( V ) ^\vee$
	goes arbitrarily far away from the boundary of 	$ \CEff ( V ) ^\vee$ ?
\end{myquest}

%%%%%%%%%%%%%%%%%%%%%%%%%%%%%%%%%%%%%%%%%%%%%%%%
%%%%%%%%%%%%%%%%%%% SECTION %%%%%%%%%%%%%%%%%%%%
%%%%%%%%%%%%%%%%%%%%%%%%%%%%%%%%%%%%%%%%%%%%%%%%

\section{Cox rings and universal torsors of Mori Dream Spaces}

Starting from this section 
and until the end of the article, $X$ is a smooth, projective and geometrically irreducible variety defined over an absolute base field $\kb$,
and we assume that $X$ is a Mori Dream Space. 
We fix once and for all $r$ line bundles $\mathcal L_1 , ... , \mathcal L_r$ on $X$, whose classes generate $\Pic ( X )$. 

\smallskip 

\label{section-cox-rings-MDS}

\subsection{Cox ring of a Mori Dream Space}

We recall some facts about Cox rings, following \cite{hassett-tschinkel2004universal}
and 
\cite[\S 1.1]{bourqui2009eclate3alignes}.
For simplicity, 
we assume that the Picard group of $X$ is 
free of finite rank $r$
and split.
Assuming that $X$ is a Mori Dream Space means 
that its Cox ring 
\[
\Cox ( X )
= \bigoplus_{( n_1, \dots , n_r ) \in \NN^r}
H^0 ( X , \mathcal L_1^{\otimes n_1 } \otimes ... \otimes \mathcal L_r^{\otimes n_r} )
\]
is of finite type,
with an 
ideal of relations 
denoted by 
$\mathcal I_X$.

We fix a finite family
$( s_i )_{i\in \mathfrak I}$
of generators of $\Cox ( X )$
and $(\mathcal D_i)_{i\in \mathfrak I}$
the corresponding set of divisors on $X$.
Let $U$ be the complement in $X$ of the union of the $\mathcal D_i$'s
and $N_X = \kb [U] ^ \times / \kb^\times $. 

The associated $\Pic ( X )$-grading on the $\kb$-algebra 
$\kb [ ( s_i )_{i\in \mathfrak I}]$
is given by 
\[
\deg ( s_i ) = [ \mathcal D_i ] \in \Pic ( X ) ,
 i\in \mathfrak I.
\]
Let $\mathcal I_X$
be the $\Pic ( X )$-homogeneous ideal of $\kb [ ( s_i )_{i\in \mathfrak I}]$ 
\[
\mathcal I_X
= \ker 
\left ( \kb [ ( s_i )_{i\in \mathfrak I}] \twoheadrightarrow \Cox ( X ) 
\right ) 
\]
encoding the relations between the generators. 
From now on we now see the Cox ring of $X$ as
\[
\Cox ( X ) 
\simeq 
\kb [ ( s_i )_{i\in \mathfrak I}]
/
\mathcal I_X . 
\]
There is a fundamental exact sequence 
\[
0 
\longrightarrow
\kb^\times 
\longrightarrow 
\kb [U] ^ \times 
\longrightarrow 
\bigoplus_{i\in \mathfrak I}
\ZZ \mathcal D_i 
\longrightarrow
\Pic ( X )
\longrightarrow 
0
\]
providing the short exact sequence
\begin{equation}
\label{exact-sequence-MDS-Pic}
0 
\longrightarrow
N_X
=
\kb [ U ]^\times / \kb^\times 
\overset{\gamma}{\longrightarrow}
\bigoplus_{i\in \mathfrak I}
\ZZ \mathcal D_i 
\underset{\mathcal D_i \mapsto [ \mathcal D_i ]}
\longrightarrow
\Pic ( X )
\longrightarrow 
0
\end{equation}
which encodes the linear relations between the 
$\mathcal D_i 's $.
This sequence generalises 
the classical exact sequence 
\[
0 
\longrightarrow
\mathcal X^* ( U )  
\overset{\gamma}{\longrightarrow}
\PL ( \Sigma ) 
\longrightarrow
\Pic ( X_\Sigma )
\longrightarrow 
0
\]
whenever 
$X = X_\Sigma$
is a projective toric variety defined by a complete fan $\Sigma$,
$\mathfrak I = \Sigma ( 1 ) $
is the set of rays of the fan, $\mathcal X^* ( U )$ is the set of characters of the dense open torus 
and $\PL ( \Sigma ) $ 
is the space of piecewise linear functions on the fan $\Sigma$.

\medskip 

In general, since $\Pic ( X ) $
and $\bigoplus_{i\in \mathfrak I}
\ZZ \mathcal D_i$
are free of finite rank, 
by Rosenlicht's lemma,
so is $N_X$.
Moreover, the effective cone of $X$ is the image in $\Pic ( X ) \otimes \RR$
of the cone
$\oplus_{i\in \mathfrak I} \RR_+ D_i$.

\begin{myexample}
As recalled in the introduction, toric varieties are exactly
the Mori Dream Spaces whose Cox ring is a polynomial ring \cite{cox1995homogeneous}.
\end{myexample}

\begin{myexample}
	In the setting of Manin's conjecture in positive caracteristic, 
	varieties with Cox ring admitting exactly one linear relation,
	called \emph{linear intrisinc hypersurfaces},
	have been considered by Bourqui in  
	\cite{bourqui2012linear-intrinsic}. 
	A family of \emph{intrisinc quadrics}
	is considered in \cite{bourqui2011quadriquesintrinseques}. 
\end{myexample}

\begin{myexample}
	Among del Pezzo surfaces, 
	the degree five split del Pezzo surface 
	is the first non-toric surface obtained by successively blowing-up points of $\PP^2_\kb$ in general position.
	Its Cox ring 
	is isomorphic to the quotient 
	\[
	\kb [ a_1 , a_2 , a_3 , a_4 , a_{12} , a_{13} , a_{14} , a_{24} , a_{23} , a_{34} ] / \mathscr I_{\mathrm{DP}_5} 
	\]
	where $\mathscr I_{\mathrm{DP}_5} $ is the ideal generated by the elements
	\begin{equation}
    \begin{aligned}
        &a_4a_{14}-a_3a_{13}+a_2a_{12},\\
        &a_4a_{24}-a_3a_{23}+a_1a_{12},\\
        &a_4a_{34}-a_2a_{23}+a_1a_{13},\\
        &a_3a_{34}-a_2a_{24}+a_1a_{14},\\
        &a_{12}a_{34}-a_{13}a_{24}+a_{23}a_{14}.
    \end{aligned}  
\end{equation}
\end{myexample}

\begin{myexample}
In general, for del Pezzo surfaces, the less the degree, the more complex is the Cox ring. 
For example, smooth \emph{split} cubic surfaces 
have a Cox ring generated by a minimal set of $27$ elements (corresponding to the lines)
satisfying $81$ algebraic relations. 
\end{myexample}

\subsection{Universal torsors and Cox rings}

Assume that $\mathcal L_1 , ... , \mathcal L_r$ are $r$ semi-ample line bundles  on $X$. 
If $\mathcal L$ is a line bundle, 
let $\mathcal L^\times$ be the $\GG_m$-torsor obtained from $\mathcal L$ by removing the zero section. 
The $T_{\NS}$-torsors
\begin{equation}
	\mathcal T_{\underline{\mathcal L}}
	= \begin{tikzcd}
		( \mathcal L_1^{-1})^\times \times_X ... \times_X (\mathcal L_r^{-1})^\times \dar \\
		X 
	\end{tikzcd}
\end{equation}
is a universal torsor on $X$. 
Take the Cox ring of $X$ to be 
\[
\Cox ( X )
= \bigoplus_{n_1, ... , n_r \in \NN^r}
H^0 ( X , \mathcal L_1^{\otimes n_1 } \otimes ... \otimes \mathcal L_r^{\otimes n_r} ) .
\]

\begin{mythm}[{\cite[Theorem 5.7]{hassett2009rationalsurfaces}}]
There is a 
$T_{\NS}$-equivariant open
embedding 
\[
\mathcal T_{\underline{\mathcal L}} \hookrightarrow \Spec ( \Cox ( X )) .
\]
\end{mythm}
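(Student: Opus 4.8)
The plan is to realise both sides as relative spectra over $X$ and to exhibit the comparison map directly on rings of sections, using the semi-ampleness hypothesis to control the complement. First I would recall that the universal torsor $\mathcal T_{\underline{\mathcal L}} = (\mathcal L_1^{-1})^\times \times_X \cdots \times_X (\mathcal L_r^{-1})^\times$ is itself a relative spectrum over $X$: writing $\pi : \mathcal T_{\underline{\mathcal L}} \to X$ for the structure morphism, one has
\[
\pi_* \mathcal O_{\mathcal T_{\underline{\mathcal L}}} \;\simeq\; \bigoplus_{(n_1,\dots,n_r)\in\ZZ^r} \mathcal L_1^{\otimes n_1}\otimes\cdots\otimes\mathcal L_r^{\otimes n_r},
\]
since removing the zero section from a line bundle $\mathcal L$ corresponds to inverting the tautological section, i.e. passing to $\bigoplus_{n\in\ZZ}\mathcal L^{\otimes n}$, and fibre products over $X$ correspond to tensor products of these graded algebras. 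Taking global sections over $X$ then gives a $\ZZ^r$-graded ring whose non-negative part is exactly $\Cox(X)$. This already produces a canonical morphism $\mathcal T_{\underline{\mathcal L}} \to \Spec(\Cox(X))$, induced by the inclusion of graded rings $\Cox(X) = \bigoplus_{\nn\in\NN^r} H^0(X,\mathcal L^{\otimes\nn}) \hookrightarrow \Gamma(X,\pi_*\mathcal O_{\mathcal T_{\underline{\mathcal L}}})$, and it is $T_{\NS}$-equivariant because the grading is the one defining the torus action.

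Next I would check that this morphism is an open immersion. The natural approach is to cover $\mathcal T_{\underline{\mathcal L}}$ and $\Spec(\Cox(X))$ by compatible affine opens. For a homogeneous element $s \in H^0(X,\mathcal L^{\otimes\nn})$ of the Cox ring with $\nn$ in the interior of the effective cone (so that $\mathcal L^{\otimes\nn}$ is ample, using semi-ampleness together with $\nn$ interior), the non-vanishing locus $X_s \subset X$ is affine, and over it each $\mathcal L_i$ is trivialised after passing to a suitable power; the preimage $\pi^{-1}(X_s)$ is then $\Spec$ of the degree-zero part of the localisation, which one identifies with the principal open $D(s) \subset \Spec(\Cox(X))$. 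Running over enough such $s$ covers $\mathcal T_{\underline{\mathcal L}}$ and shows the map is an isomorphism onto the open subset $\bigcup_s D(s) = \Spec(\Cox(X)) \setminus V(\mathfrak a)$, where $\mathfrak a$ is the irrelevant-type ideal generated by sections of ample multidegrees — this is the "Cox construction" presentation of $X$ as a GIT quotient, and the torsor is its total space minus the unstable locus. The main obstacle I anticipate is precisely the bookkeeping around which multidegrees are ample and verifying that the principal opens $D(s)$ for those $s$ actually cover the stable locus; this uses that the $\mathcal L_i$ are semi-ample and that their classes generate $\Pic(X)$, so that ample classes are cofinal in the effective cone and their non-vanishing loci cover $X$.

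Finally I would address the technical hypotheses: finite generation of $\Cox(X)$ (the Mori Dream Space assumption) is what makes $\Spec(\Cox(X))$ a scheme of finite type, and the freeness/splitness of $\Pic(X)$ identifies $T_{\NS}$ with a split torus $\GG_m^r$ acting through the $\ZZ^r$-grading, so that equivariance is automatic once the grading matches. I would also remark that the statement and its proof are essentially a reformulation in our notation of Hassett's account, so I would either cite \cite[Theorem 5.7]{hassett2009rationalsurfaces} directly for the embedding and only verify the graded-ring identification above, or reproduce the short GIT argument for completeness. The one point genuinely requiring care is that we work over an arbitrary base field $\kb$ rather than an algebraically closed one, but since we have assumed $\Pic(X)$ split the descent is trivial and the argument goes through verbatim.
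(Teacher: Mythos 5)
The paper does not prove this statement at all: it is quoted verbatim as \cite[Theorem 5.7]{hassett2009rationalsurfaces}, so there is no internal proof to compare against. Your sketch is, in substance, the standard argument from that reference (and from Hassett--Tschinkel / Hu--Keel): identify $\pi_*\mathcal O_{\mathcal T_{\underline{\mathcal L}}}$ with the $\ZZ^r$-graded algebra $\bigoplus_{\nn\in\ZZ^r}\mathcal L^{\otimes\nn}$, obtain the comparison morphism from the inclusion of the $\NN^r$-graded part $\Cox(X)$ into its global sections, read off $T_{\NS}$-equivariance from the grading, and check the open-immersion property on principal opens $D(s)$ lying over affine non-vanishing loci $X_s$. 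That is the right route and the graded-algebra identification is correct.

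One justification as written would fail: you claim that $\nn$ in the interior of the \emph{effective} cone forces $\mathcal L^{\otimes\nn}$ to be ample. This is false in general (for a del Pezzo surface the interior of the effective cone contains big classes that are not even nef). What you actually need, and what the semi-ampleness hypothesis delivers, is that the cone spanned by $[\mathcal L_1],\dots,[\mathcal L_r]$ is a full-dimensional subcone of the nef cone (full-dimensional because the classes generate $\Pic(X)$); its interior is then an open subset of the nef cone and hence lands in the ample cone, so ample multidegrees $\nn\in\NN^r$ exist and their sections have affine non-vanishing loci covering $X$. With "interior of the effective cone" replaced by "interior of the cone spanned by the $\mathcal L_i$", the covering argument goes through, and the rest of your sketch (including the remark that splitness of $\Pic(X)$ makes the descent over a non-closed $\kb$ harmless) is fine.
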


% TODO [URGENT] Détailler cet embedding 

Under this embedding,
there exists a certain finite set $B_X$ of subsets of $I$ such that $\mathcal T_{\underline{\mathcal L}}$,
via the isomorphism
\[
\Spec ( \Cox ( X ) ) \overset{\sim}{\longrightarrow} \Spec ( \kb [ ( s_i )_{i\in \mathfrak I} ] / \mathcal I_X ) ,
\]
is identified to the complement in $\Spec ( \Cox ( X ))$ of 
\begin{equation}
\bigcup_{\mathfrak I' \in B_X} 
\left \{ 
\prod_{i\in \mathfrak I'} s_i = 0 
\right \}. 
\end{equation}

% TODO [URGENT] Torseurs universels : remplir cette section 

\subsection{Collections and functor of points of an MDS}

\begin{mydef}[$X$-Collection]
Let $S$ be a scheme.
An $X$-collection ($N_X$-trivialized) over $S$ is the datum 
$( ( \mathcal L_i , u _i )_{i\in \mathfrak I} ,  ( c_\nn )_{\nn \in N_X} )$ 
of 
\begin{itemize}
	\item a family $( \mathcal L_i )_{i\in \mathfrak I}$ of invertible sheaves on $S$ together with global sections $(u_i )_{i\in \mathfrak I}$,
	\item an $N_X$-trivialisation of $(\mathcal L_i )_{i\in \mathfrak I}$, that is to say a family of isomorphisms 
	\[
	c_\nn : \otimes_{i\in \mathfrak I} \mathcal L^{n_i} \longrightarrow \mathcal O_S 
	\]
	indexed by $N_X$ such that 
	\[
	c_\nn \otimes c_{\nn '} = c_{\nn + \nn ' }
	\]
	for all $\nn , \nn ' $ in $N_X$,
\end{itemize}	
respecting the following two additional conditions:
\begin{itemize}
	\item the induced morphism
	\[
	\bigoplus_{\mathfrak I' \in B_X} 
	\bigotimes_{i\in \mathfrak I'}
	\mathcal L_i^{-1} 
	\longrightarrow 
	\mathcal O_S
	\]
	is surjective,
	\item and for all homogeneous elements $F$ of $\mathcal I_X$,
	\[
	F ( u_i ) = 0.
	\]
\end{itemize}
An isomorphism 
between two $X$-collections 
\[
( ( \mathcal L_i , u _i )_{i\in \mathfrak I} ,  ( c_\nn )_{\nn \in N_X} )
\]
and
\[
( ( \mathcal L_i ' , u _i ' )_{i\in \mathfrak I} ,  ( c_\nn ' )_{\nn \in N_X} )
\]
is a family of isomorphisms 
\[
\mathcal L_i \overset{\sim}{\longrightarrow} \mathcal L_i ' 
\]
sending $u_i $ to $u_i '$
and $c_\nn$ to $c_{\nn'}$.

We say that a collection is non-degenerate when all the $u_i$'s are non-zero.

Let $\mathcal C_X$ be the functor
on $\mathrm{Sch}_\kb$
of isomorphism classes of $X$-collections.
\end{mydef}

\begin{myremark}
As remarked in \cite[Remarque 1.7]{bourqui2009eclate3alignes}, 
two $N_X$-trivialisations above $S$ 
of a given familly of line bundles
differ from a unique group morphism $N_X \to H^0 ( S , \mathcal O_S)^\times$. 
In particular, if $S$ is projective,
two $N_X$-trivialisations on $S$ only differ from a multiplicative constant. 
\end{myremark}

\begin{mydef}[Universal collection on $X$]
The universal collection on $X$
is the collection 
\[
\left 
( 
(
\mathcal O_X ( \mathcal D_i ) , s_i )_{i\in \mathfrak I} , ( c_\nn )_{\nn \in N_X} )
\right )
\]	
where $(c_\nn)$ is the trivialisation of $(\mathcal O ( \mathcal D_i ))_{i\in \mathfrak I}$
induced by the exact sequence \eqref{exact-sequence-MDS-Pic}.
\end{mydef}

\begin{mythm}[{\cite{cox1995functor,bourqui2009eclate3alignes}}]
\label{thm-cox-bourqui-fonctor-of-points-MDS}
    The applications 
    \[
    \begin{array}{rll}
      X ( S ) 
    & \longrightarrow & 
    \mathcal C_X ( S )\\
    ( h : S \to X ) & \longmapsto & h^* \left 
( 
(
\mathcal O_X ( \mathcal D_i ) , s_i )_{i\in \mathfrak I} , ( c_\nn )_{\nn \in N_X} )
\right )
    \end{array}
    \quad 
    S \in \mathrm{Sch}_\kb,
    \]  
    induce an isomorphism of functors. 
\end{mythm}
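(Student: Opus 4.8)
The result is due to Cox \cite{cox1995functor} for toric varieties and to Bourqui \cite{bourqui2009eclate3alignes} in general, and the plan is to recover it by setting up a correspondence between $X$-collections over $S$ and $T_{\NS}$-torsors over $S$ equipped with a $T_{\NS}$-equivariant morphism to the universal torsor $\mathcal{T}_{\underline{\mathcal{L}}}$. Write $T = T_{\NS}$ and let $\mathfrak{A} = \Spec(\kb[(s_i)_{i\in\mathfrak{I}}])$, endowed with the $\Pic(X)$-grading (equivalently the $T$-action) given by $\deg(s_i) = [\mathcal{D}_i]$; recall from the discussion above that $\mathcal{T}_{\underline{\mathcal{L}}}$ is the open complement inside $\Spec(\Cox(X)) = V(\mathcal{I}_X) \subset \mathfrak{A}$ of the locus described through $B_X$, and that, by \cite[Theorem 5.7]{hassett2009rationalsurfaces}, the quotient map realises $\mathcal{T}_{\underline{\mathcal{L}}} \to X$ as the universal torsor under $T$, so that $T$ acts on $\mathcal{T}_{\underline{\mathcal{L}}}$ with trivial stabilisers. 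First I would record that $h \mapsto h^{*}(\text{universal collection})$ is a well-defined natural transformation $\alpha \colon X \to \mathcal{C}_X$: the universal collection is itself an $X$-collection --- vanishing of the homogeneous elements of $\mathcal{I}_X$ on the $s_i$ is tautological, the surjectivity condition at $B_X$ is precisely the statement that the $s_i$ cut out the complement of $\mathcal{T}_{\underline{\mathcal{L}}}$ as seen on $X$ along $\mathcal{T}_{\underline{\mathcal{L}}} \to X$, and the $N_X$-trivialisation is the one furnished by \eqref{exact-sequence-MDS-Pic} --- and each of the three conditions defining an $X$-collection is stable under arbitrary base change, so pulling back along any $h$ again yields an $X$-collection, functorially in $S$.

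The substantive step is the construction of the inverse $\beta \colon \mathcal{C}_X \to X$. Given a collection $((\mathcal{L}_i, u_i)_{i\in\mathfrak{I}}, (c_\nn)_{\nn\in N_X})$ over $S$, applying $\Hom(-,\GG_m)$ to the split exact sequence \eqref{exact-sequence-MDS-Pic} produces $1 \to T \to \GG_m^{\mathfrak{I}} \to T_{N_X} \to 1$ with $T_{N_X} = \Hom(N_X,\GG_m)$; the tuple $(\mathcal{L}_i)_i$ amounts to a $\GG_m^{\mathfrak{I}}$-torsor $P$ over $S$ (the fibre product over $S$ of the $\GG_m$-torsors $\mathcal{L}_i^{\times}$), and the compatible isomorphisms $c_\nn$ trivialise the associated $T_{N_X}$-torsor $P \times^{\GG_m^{\mathfrak{I}}} T_{N_X}$, hence reduce $P$ to a $T$-torsor $\mathcal{T} \to S$ with $\mathcal{T} \times^{T} \GG_m^{\mathfrak{I}} \simeq P$. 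The global sections $u_i \in H^{0}(S,\mathcal{L}_i)$, viewed as $\GG_m$-equivariant morphisms $\mathcal{L}_i^{\times} \to \mathbf{A}^{1}$, then assemble through $\mathcal{T} \hookrightarrow P \to \mathcal{L}_i^{\times}$ into a $T$-equivariant $S$-morphism $\Phi \colon \mathcal{T} \to \mathfrak{A}$; the relations $F(u_i) = 0$ force $\Phi$ to land in $\Spec(\Cox(X))$, while the surjectivity condition forces it, pointwise on $S$, to avoid the complement of $\mathcal{T}_{\underline{\mathcal{L}}}$, so that $\Phi$ factors through the open subscheme $\mathcal{T}_{\underline{\mathcal{L}}}$. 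Passing to $T$-quotients --- legitimate since $\mathcal{T} \to S$ and $\mathcal{T}_{\underline{\mathcal{L}}} \to X$ are both $T$-torsors, the latter by the theorem quoted above (which, $T$ being a torus and hence special, also makes descent along $\mathcal{T}_{\underline{\mathcal{L}}} \to X$ available) --- yields $\beta_S(\,\cdot\,) = h \colon S \to X$, functorially in $S$. Then I would check $\beta \circ \alpha = \mathrm{id}$ and $\alpha \circ \beta = \mathrm{id}$ by unwinding the definitions: on one hand $h^{*}\mathcal{T}_{\underline{\mathcal{L}}}$ is tautologically the $T$-torsor attached to $h^{*}(\text{universal collection})$ and its $T$-quotient is $h$ again; on the other hand the torsor $\mathcal{T}$ built above sits in a Cartesian square over $h$, hence equals $h^{*}\mathcal{T}_{\underline{\mathcal{L}}}$, and reading the $\mathcal{L}_i$, the $u_i$ and the $c_\nn$ off $\Phi$ recovers the original collection up to unique isomorphism --- the uniqueness relying on the fact that rigidified or trivialised data admit no non-trivial automorphisms (cf.\ the remark following the definition of $(P,\varepsilon)$, together with Rosenlicht's lemma for $N_X$).

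The main obstacle is not any single computation but rather the input I would be taking as a black box: that $\mathcal{T}_{\underline{\mathcal{L}}} \to X$ is genuinely a $T$-torsor with quotient $X$ --- i.e.\ Cox's theorem, resp.\ \cite[Theorem 5.7]{hassett2009rationalsurfaces} --- since only then do the two quotient-passage steps make sense and is descent along $\mathcal{T}_{\underline{\mathcal{L}}} \to X$ available. Beyond that, the only points demanding genuine care are bookkeeping ones: matching the $B_X$-indexed surjectivity condition with the precise shape of the irrelevant locus (checked point by point on $S$), and keeping track of the reduction of structure group from $\GG_m^{\mathfrak{I}}$ to $T$ dictated by the $N_X$-trivialisation.
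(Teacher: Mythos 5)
The paper does not actually prove this theorem: it is imported verbatim from \cite{cox1995functor} (toric case) and \cite{bourqui2009eclate3alignes} (general Mori Dream Spaces), so there is no in-paper argument to compare against. Your sketch --- interpreting a collection as the $T_{\NS}$-torsor obtained by reducing the structure group of $\prod_i\mathcal L_i^{\times}$ along $1\to T_{\NS}\to\GG_m^{\mathfrak I}\to T_{N_X}\to 1$ via the $c_\nn$, sending it equivariantly into $\mathcal T_{\underline{\mathcal L}}$ using the $u_i$, the relations and the non-vanishing condition, and descending along the torsor $\mathcal T_{\underline{\mathcal L}}\to X$ --- is precisely the argument of those references, and is correct in outline (the only genuine care point being, as you note, matching the $B_X$-indexed non-vanishing condition with the irrelevant locus and keeping the dualisation conventions for $(\mathcal L_i^{-1})^{\times}$ straight).
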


\section{Parametrisation of curves on Mori Dream Spaces}

\label{section-parametrisation-curves-MDS}

Again, let $X$ be a smooth projective and geometrically irreducible variety defined above an absolute base field $\kb$,
which is a Mori Dream Space.
It is natural to see 
the scheme 
\[
\HHom_\kb ( \CCC , X )_U 
\]
parametrising morphisms 
$f : \CCC \to X$
intersecting the open subset 
\[
U
= X - \bigcup_{i\in \mathfrak I} \mathcal D_i 
\]as a disjoint union
indexed by $\dd \in \NN^I$
of schemes 
\begin{equation}
\label{height-function-I}
\HHom_\kb^d ( \CCC , X )_U
\longrightarrow 
\Sym_{/\kb}^\dd ( \CCC )  
\end{equation}
where 
\[ 
\Sym_{/\kb}^\dd ( \CCC )  
= 
\prod_{i\in \mathfrak I} \Sym_{/\kb}^{d_i} ( \CCC )
\]
parameterises $\mathfrak I$-tuples of effective divisors on $\CCC$,
of degrees given by $\dd = (d_i)_{i\in \mathfrak I}$,
and 
the morphism 
in \eqref{height-function-I}
sends a morphism ${f : \CCC \to X}$,
whose image intersects the complement $U$ of the $\mathcal D_i$'s,
to the $I$ tuple of effective divisors 
$( f^* \mathcal D_i )_{i\in \mathfrak I}$ on $\CCC$.

\medskip 

In other words,
the fibre of \eqref{height-function-I} 
above any given tuple of effective divisors $(D_i)_{i\in \mathfrak I}$ on the curve $\CCC$
``counts'' the number of morphisms $\CCC \to X$ 
having intersections
with the $\mathcal D_i$'s
 prescribed by this tuple $(D_i)_{i\in \mathfrak I}$. 

\begin{center}
\begin{minipage}{\linewidth}% to keep image and caption on one page
\makebox[\linewidth]{%        to center the image
  \includegraphics{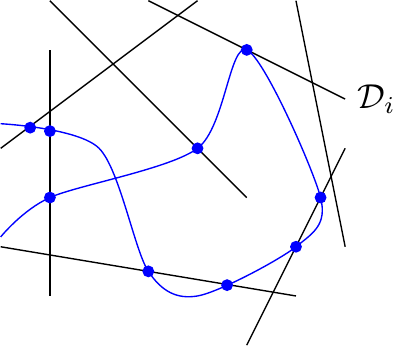}
  }
\end{minipage}
\end{center}

The parametrisation involves three types of constraints, ordered by decreasing level of complexity:
\begin{enumerate}
		\item algebraic equations coming from $\mathscr I_X$ that $\mathfrak I$-tuples of sections of certain invertible sheaves on the curve have to satisfy;
	\item coprimality conditions on the divisors of these sections, encoded at the level of the scheme of effective divisors of the curve, see \cref{def-coprime-condition-divisors} \cpageref{def-coprime-condition-divisors};
	\item linear equivalence relations between these divisors, encoded at the level of the Picard group scheme of the curve, see \cref{def-linear-equiv-cond-on-tuples-of-divisors} \cpageref{def-linear-equiv-cond-on-tuples-of-divisors}.
\end{enumerate}

\subsection{$X$-collections on $\CCC$}

If one replaces $S$ by $\CCC \times_\kb S$
in \cref{thm-cox-bourqui-fonctor-of-points-MDS} 
one gets
an isomorphism of functors between 
\[
S \rightsquigarrow \HHom_\kb ( \CCC , X )_U ( S ) 
\]
and 
the subfunctor of 
\[
S \rightsquigarrow \mathcal C_X ( \CCC \times_\kb S )
\]
of isomorphism classes of \emph{non-degenerate $X$-collections}.
An $S$-point of $\HHom_\kb ( \CCC , X )_U$, 
corresponding to a morphism 
\[
f_S : \CCC \times_\kb S \to X,
\]
seen as an $(\CCC \times_\kb S)$-points of $X$,
is sent to the (isomorphism class of) the $X$-collection on $\CCC \times_\kb S$ given by 
\[
(     f_S^* \mathcal O ( \mathcal D_i )
,
f_S^* u_i , f_S^* ( c_\nn ) 
) .
\]
Conversely,
if $( \mathcal L_i , v_i  , ( c_\nn ' )) $
is an $X$-collection on $S ' = \CCC \times S$,
then $f_S$ 
is the morphism sending $(x,s)$
to $v_i ( ( x , s) )_{i \in I} \in  \mathbf A_\kb^\mathfrak I ( S ) $.
To be precise, this depends on the choice of a local trivialisation of the $\mathcal L_i$'s, but two different choices lead to coordinates that are related by an element of $T_{\NS}$, 
so $v_i ( ( x , s) )_{i \in I}$ is defined up to the action of $T_{\NS}$ and will give a unique point in $X$.
Indeed, the vanishing condition ensures that it is actually in $\Spec ( \kb [ ( s_i )_{i\in \mathfrak I} ] / \mathcal I_X ) ( S )$. 
The coprimality conditions
ensure that 
\[
( v_i ( x , s ) )_{i \in I} \in \mathcal T_X \hookrightarrow \Spec ( \Cox ( X ) )
\]
via the inverse of the identification  
\[
\Spec ( \Cox ( X ) ) \overset{\sim}{\longrightarrow} \Spec ( \kb [ ( s_i )_{i\in \mathfrak I} ]/ \mathcal I_X ) 
\]
we fixed once and for all,
which means that one can compose by the $T_{\NS}$-torsor ${\pi : \mathcal T_X \to X}$
to finally land in $X$. 

\[
\begin{tikzcd}
	&   \mathcal T_X \rar[hook] \dar & \Spec ( \Cox ( X ) ) \rar["\sim"] & \Spec ( \kb [ ( s_i )_{i\in \mathfrak I} ] / \mathcal I_X ) \rar[hook] & \mathbf A_\kb^\mathfrak I  \\
	\CCC \urar[dashed] \rar & X &  
\end{tikzcd}
\]

%\begin{myremark}
%Uniqueness of the lifts 
%can be obtained by marking points on the $\LLL_i$'s 
%and on 
%$\mathbf A_\kb^\mathfrak J \setminus \cup_i \{ s_i = 0 \}$
%or equivalently by considering rigidified line bundles. 

%Let us fix a point $x \in \CCC ( \kb ) $. 
%For any collection $( \mathcal L_i , v_i  , ( c_\nn ' )) $ on $\CCC \times_\kb S$
%and any rigidification 
%${\alpha_i : x^* \mathcal L_i \overset{\sim}{\to} \mathcal O_S}$ 
%there is actually a \emph{unique} possible choice for the  trivialisation 
%$(c_\nn ' )$ in the isomorphism class of the collection 
%which is 
%compatible with these rigidifications,
%that is to say such that $\alpha_i^{\otimes n_i} = ( c_\nn )_{| \{ x \} \times S }$ for all $\nn \in N_X$,
%and a representative of the orbit of  
%$v_i ( ( \scdot , s) )_{i \in I}$ 
%is fixed by taking $\alpha_i ( v_i ( ( x , s) )) = 1 \in \mathcal O_S ( S )$ for all $i \in I$. 
% TODO [URGENT] Unicité du relevé : revoir ça 
%\end{myremark}

\subsection{Multidegrees and linear classes of morphism}
Let $M$ be any finitely and freely generated $\ZZ$-module.
Seeing it as a constant group scheme over $\kb$,
it makes sense to consider the group scheme
\[
\HHom_{\kb-\mathrm{gp.sch.}}
( M_\kb , \PPic_{\CCC/\kb} ) .
\]
\begin{myremark}
The choice of a basis of $M$
induces an isomorphism of $\kb$-group schemes 
\[
\HHom_{\kb-\mathrm{gp.sh.}}
( M_\kb , \PPic_{\CCC/\kb} ) 
\simeq 
\PPic_{\CCC/\kb}^{\rk ( M )}
\]
but there is no canonical choice for such an isomorphism. 
\end{myremark}

\subsubsection{Picard class of a morphism}
Recall that we assume that $\Pic ( X )$ is a free $\ZZ$-module of finite rank. 
Moreover, 
since $X\to \Spec ( \kb )$ is projective,
by \cref{thm-existence-criterion-picard} \cpageref{thm-existence-criterion-picard}
the Picard scheme 
\[
\PPic_{X/\kb}
\]
exists 
and $\PPic_{X/\kb}\simeq \Pic ( X )_\kb$ as $\kb$-group schemes. 

There is a morphism of schemes
\[
\HHom_\kb ( \CCC , X ) \longrightarrow \HHom_{\kb-\mathrm{gp.sch.}} ( \PPic_{X/\kb}, \PPic_{\CCC/\kb} )
\]
sending a point parametrising a morphism $f : \CCC \to X $ 
to the morphism of group schemes over $\kb$
given by 
\[
f^* 
= 
\left (
[
L 
] \in \PPic_{X/\kb}
\longmapsto 
\left [ f^* L \right ]
\in \PPic_{\CCC/\kb} 
\right ) .
\]
This allows us to see $\HHom_\kb ( \CCC , X )$
above 
\[
\HHom_{\kb-\mathrm{gp.sch.}} ( \PPic_{X/\kb} , \PPic_{\CCC/\kb} ) 
\]
which is,
since $\Pic ( X )$
is by assumption 
free of finite rank $r$,
non-canonically isomorphic to a product of $r$ copies of $\PPic_{\CCC/\kb}$.
Composing by the degree map 
\[
\deg : \PPic_{X/\kb}\to \ZZ_\kb 
\]
gives us back the multidegree $\degg ( f )$.

Now, 
\begin{enumerate}
	\item sending a tuple $(D_i)_{i\in \mathfrak I} \in (\DDiv_{\CCC / \kb})^\mathfrak I$
to its image in $( \PPic_{\CCC \kb} )^\mathfrak I$
via the Abel-Jacobi morphism of $\CCC$,
\item and seing $( \PPic_{\CCC \kb} )^\mathfrak I$
as 
\[
\HHom_{\kb-\mathrm{gp}} ( \ZZ_\kb^\mathfrak I , \PPic_{\CCC / \kb} )
\]
(via the choice of the images of the canonical basis of 
$\ZZ_\kb^\mathfrak I $)
\item plus composing with the morphism

\[
  \HHom_\mathrm{gp} ( \PPic_{X / \kb}  , \PPic_{\CCC / \kb}  )
  \longrightarrow 
    \HHom_\mathrm{gp} ( \ZZ_\kb^\mathfrak I , \PPic_{\CCC / \kb}  ) 
\]
induced by \eqref{exact-sequence-MDS-Pic},
\end{enumerate}
one gets a commutative diagram
\[
\begin{tikzcd}
	& \HHom_\kb ( \CCC , X )_U \dlar \drar \dar & \\
  (\DDiv_{\CCC / \kb})^\mathfrak I \arrow[r] 
  & \HHom_\mathrm{gp} ( \ZZ_\kb^\mathfrak I , \PPic_{\CCC / \kb}  )  
  &     \HHom_\mathrm{gp} ( \PPic_{X / \kb}  , \PPic_{\CCC / \kb}  ) \lar 
\end{tikzcd}
\]

\subsubsection{Coprimality conditions}

\begin{mydef}
\label{def-coprime-condition-divisors}
The subscheme 
\[
\Sym^\dd_\kb ( \CCC )_{X,**}
\]
is the open subscheme of $\Sym^\dd_\kb ( \CCC )$
parametrising 
$\mathfrak I$-tuples $(D_i)_{i\in \mathfrak I}$
of effective divisors on $\CCC$ satisfying the coprimality condition:
\[
\forall \, J \in B_X 
\quad
\bigcap_{j\in J} \Supp ( D_j ) = \varnothing .  
\]
When the degrees of the divisors are not specified,
let 
\[
( \DDiv_{\CCC / \kb} )^\mathfrak I_{X,**}
\]
be the corresponding open subscheme of 
$( \DDiv_{\CCC / \kb} )^\mathfrak I$.
\end{mydef}

\subsubsection{Linear conditions on effective divisors}
Recall that we see $N_X$ as sublattice of $\ZZ^I$ via the exact sequence \eqref{exact-sequence-MDS-Pic} given on \cpageref{exact-sequence-MDS-Pic}.
Inside \[
\PPic^\dd_{\CCC / \kb} 
= 
\prod_{i\in \mathfrak I} \PPic^{d_i}_{\CCC / \kb} 
\subset ( \PPic_{\CCC / \kb} )^\mathfrak I
,
\]
let us 
consider the closed subscheme 
\[
\left (
\PPic^\dd_{\CCC / \kb}
\right )_X
\] 
of tuples of classes of invertible sheaves $\mathscr L_i$ satisfying the condition
\[
\forall \, \nn \in N_X \quad \otimes_{i\in \mathfrak I} \mathscr L_i^{\otimes n_i} \simeq \OOO_\CCC .
\]
Of course, if 
\[
\sum_{i\in \mathfrak I} n_i d_i \neq 0 
\]
for some $n\in N_X$,
then $(\PPic^\dd_{\CCC / \kb})_X$ 
is necessarily empty.

Again by \eqref{exact-sequence-MDS-Pic},
inside $\HHom_\mathrm{gp} ( \ZZ_\kb^\mathfrak I , \PPic_{\CCC / \kb}  )$,
our group of Picard classes 
\[
\HHom_\mathrm{gp} ( \PPic_{X / \kb}  , \PPic_{\CCC / \kb}  )
\]
is identified with $\left (
\PPic^\dd_{\CCC / \kb}
\right )_X$. 

\begin{myexample}
    When $X=\PP^1_\kb$,
then $N_X$ is identified with 
$\ZZ ( 1 , - 1 )$ inside $\ZZ^2$ 
and what one obtains 
for $( \PPic^\dd_{\CCC / \kb} )_{\PP^1_\kb}$
is 
the empty scheme if $\dd \notin \ZZ ( 1 , 1 )$
and 
the diagonal of 
$\PPic^d_{\CCC / \kb} \times_\kb \PPic^d_{\CCC / \kb}$ otherwise. 
\end{myexample}

\begin{mydef}\label{def-linear-equiv-cond-on-tuples-of-divisors}
    For any $\dd \in \NN^\mathfrak I$,
    the scheme 
    $\Sym^\dd_\kb ( \CCC )_{X,*}$ is given by the Cartesian square
    \[
    \begin{tikzcd}
        \Sym^\dd_\kb ( \CCC )_{X,*} \rar \dar \arrow[dr, phantom, "\ulcorner", very near start] & \Sym^\dd_\kb ( \CCC ) \dar["\AJ_{\CCC/\kb}"]\\
       ( \PPic_{\CCC / \kb}^\dd)_X \rar &\PPic_{\CCC / \kb}^\dd .
    \end{tikzcd}
    \]
\end{mydef}

\subsubsection{Putting all together}

\begin{mydef}
\label{def-all-conditions-on-divisors}
We set 
\[
\Sym^\dd_\kb ( \CCC )_X
=
\Sym^\dd_\kb ( \CCC )_{X,*}
\cap 
\Sym^\dd_\kb ( \CCC )_{X,**}
\]
for all $\dd\in \NN^I$,
equivalently
\begin{equation}
	( \DDiv_{\CCC / \kb} )^\mathfrak I_X
=
( \DDiv_{\CCC / \kb} )^\mathfrak I_{X,*}
\cap 
( \DDiv_{\CCC / \kb} )^\mathfrak I_{X,**}
\end{equation}
\end{mydef}

\subsection{Parametrisation}
We fix a Poincaré sheaf $\mathscr P$ for $\PPic_{\CCC / \kb}$ 
and a flattening stratification of $\PPic^d_{\CCC / \kb}$ for $\mathscr P$ and $d \in \mathbf N$ with $d \leqslant 2g -1$. 
Working relatively above $\PPic_{\CCC  / \kb} $,
we identify the total space 
$\mathbf V_{\PPic_{\CCC / \kb}} ( \mathscr Q )$
of 
the line bundle $\mathcal O_{\mathbf P ( \mathscr Q )} ( -1 )$ minus the zero section 
with the $\GG_{m,\PPic_{\CCC / \kb }}$-torsor
\[
\Spec_{\mathbf P ( \mathscr Q )}
\left ( \bigoplus_{\bm m \in \ZZ} 
\mathscr O_{\mathbf P ( \mathscr Q )} ( m ) 
\right )
\longrightarrow \mathbf P ( \mathscr Q ) 
\simeq \DDiv_{\CCC /\kb}  
\] 
which we denote by $\mathscr O_{\DDiv_{\CCC /\kb}} ( -1 )^\times $. 
The situation is described by the following diagram of functors.
\[
\begin{tikzcd}
%& & \mathcal O_{ \DDiv_{\CCC / \kb}} ( - 1 )  \dar[equal]\\
\mathcal O_{ \DDiv_{\CCC / \kb}} ( - 1 )^\times \dar \rar[r,equal]
& \Spec_{\PP ( \mathscr Q )} ( \oplus_{m\in \ZZ} \mathscr O_{\PP ( \mathscr Q )} ( m ) ) \dar \ar[r,hook] & 
\mathbf V_{\PPic_{\CCC / \kb}} ( \mathscr Q ) %\Spec_{\PP ( \mathscr Q )} ( \oplus_{m\in \NN} \mathscr O_{\PP ( \mathscr Q )} ( m ) ) 
\\
 \DDiv_{\CCC / \kb} \dar["{\AJ_{\CCC / \kb}}"] \rar[equal] & \PP ( \mathscr Q )  & \\
\PPic_{\CCC / \kb}  & & 
\end{tikzcd}
\] 
When $\CCC = \PP^1_\kb$ this diagram is nothing else than
\[
\begin{tikzcd}
\coprod_{d \in \NN^*} 
\mathcal O_{\PP^d_\kb } ( - 1 ) ^\times  \rar[r,equal]
& \coprod_{d \in \NN^*} \mathbf A_\kb^{d+1} \setminus \{ \mathbf 0 \} \dar["/\GG_m"] \ar[r,hook]
& \coprod_{d \in \NN^*} \mathbf A_\kb^{d+1}  \\
\coprod_{d \in \NN^*} \DDiv_{\PP^1_\kb / \kb}^d \dar["\deg"] \rar[equal] & \coprod_{d \in \NN^*} \PP_\kb^d  & \\
\ZZ_\kb = \coprod_{d\in \ZZ} \Spec ( \kb ) & & 
\end{tikzcd}
\]

Recall that $(\DDiv_{\CCC / \kb } )^\mathfrak I_X$ 
is the subscheme of $(\DDiv_{\CCC / \kb } )^\mathfrak I$ encoding the linearity and coprimality conditions, see \cref{def-all-conditions-on-divisors} \cpageref{def-all-conditions-on-divisors}.

\begin{mydef}
Let $\widehat X$
and $\widetilde X$
be the subschemes respectively
of 
\[
\widehat{M} = 	\prod_{i\in \mathfrak I} \mathcal O_{\PP ( \mathscr Q ) } ( - 1 )^\times 
\]
and 
\[
\widetilde M  = \widehat{M} / T_{\Pic ( X )} 
\]
obtained by pulling-back $\widehat M$
and $\widetilde M$
to the subscheme 
\[
(\DDiv_{\CCC / \kb } ^\mathfrak I)_X
\]
and restricting to points $ ( x_i )_{i\in \mathfrak I} $ in $\widehat X$ or $\widetilde X$
satisfying 
\[
F (  ( x_i )_{i\in \mathfrak I} ) = 0
\]
for every $\Pic ( X )$-homogeneous  $F \in \mathcal I_X$. 
\label{definition-X-hat}
\[
\begin{tikzcd}
\widehat{M} = 	\prod_{i\in \mathfrak I} \mathcal O_{\PP ( \mathscr Q ) } ( - 1 )^\times  \dar  & \arrow[l,hook] \widehat X \dar \dlar \\
			\widetilde M  = \widehat{M} / T_{\Pic ( X )} \dar & \arrow[l,hook] \widetilde X \dlar \dar \\
	\PP ( \mathscr Q )^\mathfrak I\dar &  \arrow[l,hook] (\DDiv_{\CCC / \kb } ^\mathfrak I)_X
 \dar \dlar \\
	(\PPic_{\CCC / \kb} )^\mathfrak I & \arrow[l,hook] 	(\PPic_{\CCC / \kb} )^\mathfrak I _X
\end{tikzcd}
\]
\end{mydef}

\begin{mythm}
\label{thm-parametrisation-finale}
	There is a 
	piecewise isomorphism of schemes over 
	$(\DDiv_{\CCC / \kb } )^\mathfrak I$
	between 
	$
	\HHom_\kb ( \CCC , X )_U
	$
	and 
	$ 
	\widetilde X .
	$ 
\end{mythm}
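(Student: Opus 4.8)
The plan is to unwind the definitions and assemble the parametrisation from the three types of data already isolated in the preceding subsections: the functor-of-points description of an MDS (\cref{thm-cox-bourqui-fonctor-of-points-MDS}), the representability of $\DDiv_{\CCC/\kb}$ as $\PP(\mathscr Q)$ relative to $\PPic_{\CCC/\kb}$ (\cref{proposition-divisors-as-proj}), and the characterisation \eqref{equation-caracterisation-of-V(Q)-specialised} of $\mathbf V_{\PPic_{\CCC/\kb}}(\mathscr Q)$ as the space of sections of the relevant invertible sheaf. First I would fix a tuple of divisor-degrees $\dd\in\NN^{\mathfrak I}$ and work over the corresponding component, so that all the relative constructions over $\PPic_{\CCC/\kb}$ are available (possibly after passing to the flattening stratification of $\PPic^{d_i}_{\CCC/\kb}$ for the small degrees $d_i\leqslant 2g-1$, where $\mathscr P$ fails to be locally free). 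By \cref{thm-cox-bourqui-fonctor-of-points-MDS} applied with base $\CCC\times_\kb S$, an $S$-point of $\HHom_\kb(\CCC,X)_U$ is the same as an isomorphism class of non-degenerate $X$-collection $((\mathcal L_i,v_i)_{i\in\mathfrak I},(c_{\nn}))$ on $\CCC\times_\kb S$; the goal is to match this datum with an $S$-point of $\widetilde X$.

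The key translation steps, in order: (1) An $X$-collection determines, for each $i$, an invertible sheaf $\mathcal L_i$ on $\CCC\times_\kb S$ together with a nonzero section $v_i$; the vanishing divisor $D_i=\ddiv(v_i)$ is an effective divisor on $\CCC\times_\kb S$, flat over $S$, hence an $S$-point of $\DDiv^{d_i}_{\CCC/\kb}$, and $\mathcal L_i\simeq\OOO_{\CCC\times_\kb S}(D_i)\otimes\pr_S^*\mathscr N_i$ for some invertible $\mathscr N_i$ on $S$ (here is where one uses that $\mathcal L_i$ is rigidified/pinned down up to $\Pic(S)$, exactly as in the linear-system functor $\LinSys$). (2) By \cref{proposition-divisors-as-proj}, the pair $(D_i,v_i)$ — a relative effective divisor together with a section cutting it out — is precisely a point of $\mathcal O_{\DDiv_{\CCC/\kb}}(-1)^\times$, via the identification of $\mathbf V_{\PPic_{\CCC/\kb}}(\mathscr Q)\setminus 0$ with $\Spec_{\PP(\mathscr Q)}(\oplus_m\OOO_{\PP(\mathscr Q)}(m))$ and \eqref{equation-caracterisation-of-V(Q)-specialised}: a global section of $\OOO_{\CCC\times_\kb T}(D)$ up to scaling is the same as a $T$-point of $\PP(\mathscr Q)$, and remembering the actual section (not just up to scale) lifts it to the $\GG_m$-torsor $\mathcal O_{\DDiv_{\CCC/\kb}}(-1)^\times$. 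Doing this for every $i\in\mathfrak I$ lands the collection in $\widehat M=\prod_{i}\mathcal O_{\PP(\mathscr Q)}(-1)^\times$. (3) The $N_X$-trivialisation $(c_{\nn})$, by the exact sequence \eqref{exact-sequence-MDS-Pic}, forces the linear equivalence relations $\otimes_i\mathcal L_i^{\otimes n_i}\simeq\OOO$ for $\nn\in N_X$, i.e.\ the point of $(\PPic_{\CCC/\kb})^{\mathfrak I}$ obtained via Abel–Jacobi lies in $(\PPic^{\dd}_{\CCC/\kb})_X$; this is exactly the condition cutting out $(\DDiv_{\CCC/\kb})^{\mathfrak I}_{X,*}$ in \cref{def-linear-equiv-cond-on-tuples-of-divisors}. (4) Non-degeneracy together with the surjectivity condition $\bigoplus_{\mathfrak I'\in B_X}\otimes_{i\in\mathfrak I'}\mathcal L_i^{-1}\twoheadrightarrow\OOO$ in the definition of $X$-collection translates, at the level of the divisors $D_i$ on the smooth curve $\CCC$, into $\bigcap_{j\in J}\Supp(D_j)=\varnothing$ for all $J\in B_X$, i.e.\ the coprimality condition defining $(\DDiv_{\CCC/\kb})^{\mathfrak I}_{X,**}$. (5) The relations $F(v_i)=0$ for $\Pic(X)$-homogeneous $F\in\mathcal I_X$ translate verbatim into the conditions $F((x_i)_{i\in\mathfrak I})=0$ cutting $\widehat X$ out of $\widehat M$. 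Finally, two $X$-collections are isomorphic iff they differ by the action of the torus $T_{\Pic(X)}$ on the $\mathcal L_i$'s (rescaling the $v_i$ compatibly with the $c_{\nn}$), so passing to isomorphism classes is exactly quotienting $\widehat X$ by $T_{\Pic(X)}$ to get $\widetilde X$. Running all five translations in both directions gives mutually inverse natural transformations between the two functors, hence the claimed isomorphism of schemes over $(\DDiv_{\CCC/\kb})^{\mathfrak I}$.

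I expect the genuine obstacle to be step (2), the identification of the section-remembering data with $\mathcal O_{\DDiv_{\CCC/\kb}}(-1)^\times$, and in particular making it work uniformly in families including the low-degree components $d_i\leqslant 2g-1$ where the pushforward of the Poincaré sheaf is not locally free. There one cannot use the naive $\mathbf V(\mathscr Q)$ globally but only the collection of coherent sheaves on the flattening stratification, so the statement is only a \emph{piecewise} isomorphism — which is why the theorem is phrased that way and why the argument has to be organised stratum by stratum, then glued. A secondary subtlety is bookkeeping the difference between $\mathcal L_i$ and $\OOO_{\CCC\times_\kb S}(D_i)$ by an invertible sheaf $\mathscr N_i$ on $S$: one must check this twist is canonically trivialised (or absorbed) once one remembers the section $v_i$, so that no spurious $\Pic(S)$-ambiguity survives; this is the same mechanism that makes $\LinSys_{\mathscr P/(\CCC\times\PPic)/\PPic}$ representable by $\AJ_{\CCC/\kb}$ in \cref{proposition-divisors-as-proj}, so it should go through cleanly. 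Everything else — the dictionary between $N_X$-trivialisations and linear equivalence, between $B_X$ and coprimality, between $\mathcal I_X$-relations on sections and $\mathcal I_X$-relations on coordinates, and between collection-isomorphisms and the $T_{\Pic(X)}$-action — is a direct unwinding of the definitions set up in \cref{section-cox-rings-MDS} and in the present section, and I would present it as such rather than belabour it.
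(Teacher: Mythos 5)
Your proposal is correct and follows essentially the same route as the paper's proof: both reduce the statement to \cref{thm-cox-bourqui-fonctor-of-points-MDS} applied over $\CCC\times_\kb S$, translate the data of a non-degenerate $X$-collection into a point of $\widetilde X$ via the Abel--Jacobi map, the identification \eqref{equation-caracterisation-of-V(Q)-specialised} of sections with points of $\mathbf V_{\PPic_{\CCC/\kb}}(\mathscr Q)$, and the $T_{\Pic(X)}$-quotient, working stratum by stratum on the flattening stratification to account for the low-degree components. Your write-up is in fact more explicit than the paper's (which compresses one direction into ``obvious by definition'' and defers the independence-of-choices check), and you correctly identify the two genuine subtleties — the piecewise nature forced by the non-flatness of the pushforward of $\mathscr P$, and the absorption of the $\Pic(S)$-twist $\mathscr N_i$ once the section is remembered.
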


\begin{proof}
We would like to use \cref{thm-cox-bourqui-fonctor-of-points-MDS},
so it is sufficient to exhibit an isomorphism of 
functors between the functor of $X$-collections on $\CCC$
and the functor of points of $\widetilde X$,
relatively to $( \DDiv_{\CCC / \kb} )^\mathfrak I$.
But going from isomorphism classes of collections 
to points of $\widetilde X \to (\DDiv_{\CCC / \kb } )^\mathfrak I$ is obvious by definition of the associated functors:
one just sends an $S$-collection to the corresponding $S$-points 
of $\PPic_{\CCC / \kb}$ and $\mathbf V_{\PPic_{\CCC / \kb}} ( \mathscr Q )$. 
The non-degeneracy condition ensures that the orbit of the section under $T_{\Pic ( X )} $ is actually in $\widetilde M$. 
The trivialisation ensures that the image is in $(\DDiv_{\CCC / \kb } ^\mathfrak I)_X$. 
	
	Conversely,
	let $h : S \to \widetilde X$ be a morphism of $\kb$-schemes.
	We assume that its composition $h'$ with $\widetilde M \to ( \PPic_{\CCC / \kb} )^{\mathfrak I}$
	has image contained in one of the strata of the flattening stratification for the Poincaré sheaf $\mathscr P$. 
	Then it induces
	an $\mathfrak I$-tuple of $S$-points of $\DDiv_{\CCC / \kb}$ 
	\[
	D_i \in \DDiv_{\CCC / \kb} ( S ) \qquad i \in \mathfrak I
	\]
	lying via the Abel-Jacobi morphism 
	in the linear systems associated to an $\mathfrak I$-tuple of $S$-point of $\PPic_{\CCC / \kb}$
	\[
	\mathscr L_i 
	= ( h_i ' )^* \mathscr P
	 \in \PPic_{\CCC / \kb} ( S ) \qquad i \in \mathfrak I.
	\]
	Then, by definition of the functors associated to all these schemes, 
	in particular \eqref{equation-caracterisation-of-V(Q)-specialised} \cpageref{equation-caracterisation-of-V(Q)-specialised},
	for each $i\in \mathfrak I$, we can pick  
	any lift $S \to \widehat X$ of $h$ and
	\begin{align*}
	& \sigma_{i,S} \in H^0 ( \CCC \times_\kb S ,  \mathscr L_i ) 
		= \mathbf V ( \mathscr Q ) ( S ) 
	 \text{ such that } \sigma_{i,s} \neq 0 \text{ for all } s \in S  \\
	& \text{defining } D_i \in \DDiv_{\CCC /\kb} ( S ) \text{ such that } \OOO_\CCC ( D_i ) \simeq \mathscr L_i 	
	\end{align*}
	and such that the tuple of classes $( [  \mathscr L_i ] )_{i\in \mathfrak I}$,
	seen as a morphism $\ZZ_S^\mathfrak I \to \PPic_{\CCC / \kb} ( S )$,
	is 
	in the kernel of the restriction map
	\[
	\Hom ( \ZZ_S^\mathfrak I , \PPic_{\CCC / \kb} ( S ) )
	\to 
	\Hom ( (N_X)_S  , \PPic_{\CCC / \kb} ( S ) )  .
	\]
	This last fact gives us a trivialisation and finally an isomorphism class of collection.
	One can check that this class does not depend on the choices made. 
	\end{proof}	

\begin{mycor}
	In $\KVar{ (\DDiv_{\CCC / \kb } )^\mathfrak I}$ and $\KVar{ ( \PPic_{\CCC / \kb } )^\mathfrak I}$
	we have the equality of classes 
	\[
	\left [
	\HHom_\kb ( \CCC , X )_U \to (\DDiv_{\CCC / \kb } )^\mathfrak I_X
	\right ]
	=
	\left [
	\widetilde X \to (\DDiv_{\CCC / \kb } )^\mathfrak I_X
	\right ] .
	\]
\end{mycor}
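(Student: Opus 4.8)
The corollary is essentially a formal consequence of \cref{thm-parametrisation-finale} together with the elementary principle that piecewise isomorphic varieties over a fixed base carry the same class in the Grothendieck ring of that base, followed by two pushforwards. So the plan is: (1) record that principle; (2) apply it over $(\DDiv_{\CCC/\kb})^\mathfrak I_X$ to the piecewise isomorphism of \cref{thm-parametrisation-finale}; (3) push the resulting identity forward along $(\DDiv_{\CCC/\kb})^\mathfrak I_X\hookrightarrow(\DDiv_{\CCC/\kb})^\mathfrak I$ and then along the Abel--Jacobi morphism to land in $\KVar{(\PPic_{\CCC/\kb})^\mathfrak I}$.

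\textbf{The general principle.} Let $S$ be a scheme and let $Y\to S$, $Z\to S$ be $S$-varieties which are piecewise isomorphic over $S$, i.e.\ there are finite partitions $Y=\coprod_j Y_j$ and $Z=\coprod_j Z_j$ into locally closed $S$-subschemes with $Y_j\simeq Z_j$ over $S$ for all $j$. Iterating the cut-and-paste relations (ordering the finitely many pieces so that each is closed in the complement of the previous ones) gives
\[
[Y\to S]=\sum_j [Y_j\to S]=\sum_j [Z_j\to S]=[Z\to S]
\]
in $\KVar S$. In positive characteristic one works, as throughout the paper, in the variant of the Grothendieck ring where radicial surjective morphisms are inverted; this changes nothing here, and in any case the piecewise isomorphism produced in the proof of \cref{thm-parametrisation-finale} is a genuine one, its strata arising from the flattening stratification of the $\PPic^d_{\CCC/\kb}$'s.

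\textbf{Applying it.} Take $S=(\DDiv_{\CCC/\kb})^\mathfrak I_X$, $Y=\HHom_\kb(\CCC,X)_U$ and $Z=\widetilde X$. Both structure morphisms to $(\DDiv_{\CCC/\kb})^\mathfrak I$ factor through the locally closed subscheme $(\DDiv_{\CCC/\kb})^\mathfrak I_X$: for $\widetilde X$ this is its very definition (\cref{definition-X-hat}), while for $\HHom_\kb(\CCC,X)_U$ it was verified in the proof of \cref{thm-parametrisation-finale}, the divisors $f^*\mathcal D_i$ satisfying the linear-equivalence and coprimality conditions of \cref{def-all-conditions-on-divisors}. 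Since $(\DDiv_{\CCC/\kb})^\mathfrak I_X\hookrightarrow(\DDiv_{\CCC/\kb})^\mathfrak I$ is a monomorphism, the piecewise isomorphism of \cref{thm-parametrisation-finale}, which is over $(\DDiv_{\CCC/\kb})^\mathfrak I$, is automatically a piecewise isomorphism over $(\DDiv_{\CCC/\kb})^\mathfrak I_X$ (its strata and the isomorphisms on them factor uniquely through $(\DDiv_{\CCC/\kb})^\mathfrak I_X$). Hence the principle yields
\[
\left[\HHom_\kb(\CCC,X)_U\to(\DDiv_{\CCC/\kb})^\mathfrak I_X\right]=\left[\widetilde X\to(\DDiv_{\CCC/\kb})^\mathfrak I_X\right]
\]
in $\KVar{(\DDiv_{\CCC/\kb})^\mathfrak I_X}$. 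Composing structure maps with the locally closed immersion $(\DDiv_{\CCC/\kb})^\mathfrak I_X\hookrightarrow(\DDiv_{\CCC/\kb})^\mathfrak I$ defines a homomorphism of additive groups $\KVar{(\DDiv_{\CCC/\kb})^\mathfrak I_X}\to\KVar{(\DDiv_{\CCC/\kb})^\mathfrak I}$ (it respects the cut-and-paste relations), which gives the first asserted equality; composing once more with $\prod_{i\in\mathfrak I}\AJ_{\CCC/\kb}\colon(\DDiv_{\CCC/\kb})^\mathfrak I\to(\PPic_{\CCC/\kb})^\mathfrak I$ gives the second.

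\textbf{Main obstacle.} There is no serious difficulty; the only point requiring care is bookkeeping. The schemes $\HHom_\kb(\CCC,X)_U$, $\widetilde X$ and $(\DDiv_{\CCC/\kb})^\mathfrak I$ are not of finite type, being disjoint unions over $\dd\in\NN^\mathfrak I$, so the identity above is read componentwise: for each fixed $\dd$ one gets the equality of classes of the $\dd$-pieces in $\KVar{\Sym^\dd_\kb(\CCC)_X}$ and then pushes forward as described. On a fixed $\dd$-component only finitely many strata of the flattening stratification of the $\PPic^{d_i}_{\CCC/\kb}$ intervene, so the partition witnessing the piecewise isomorphism there is finite, as the cut-and-paste argument requires.
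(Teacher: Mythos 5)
Your proposal is correct and follows the same route the paper intends: the corollary is stated without proof as an immediate consequence of \cref{thm-parametrisation-finale}, via the standard fact that a piecewise isomorphism over a base yields equality of classes in the Grothendieck ring of that base, followed by pushforward along $(\DDiv_{\CCC/\kb})^\mathfrak I_X\hookrightarrow(\DDiv_{\CCC/\kb})^\mathfrak I$ and the Abel--Jacobi morphism. Your componentwise reading over each $\dd\in\NN^\mathfrak I$ to keep the partitions finite is exactly the right bookkeeping.
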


%%%%%%%%%%%%%%%%%%%%%%%%%%%%%%%%%%%%%%%%%%%%%%%%
%%%%%%%%%%%%%%%%%%% SECTION %%%%%%%%%%%%%%%%%%%%
%%%%%%%%%%%%%%%%%%%%%%%%%%%%%%%%%%%%%%%%%%%%%%%%

\section{Curves on smooth split projective toric varieties (the classical case)}
\label{section-toric-classic}

In this section we assume that $X = X_\Sigma$ 
is a smooth projective split toric variety
defined by a complete regular fan $\Sigma$.
In that situation, 
the definition of $\Sym^\dd_\kb ( \CCC )_X$
only depends on the fan $\Sigma$,
and there are no algebraic relations between the generators of the Cox ring of $X_\Sigma$, 
so it is natural to replace the index $X$ by $\Sigma$ everywhere in our notations.
The motivic height zeta function
\[
\mathcal Z_{X_\Sigma} ( \TT )
=
\sum_{\dd \in \NN^\mathfrak I}
\left [
\HHom^\dd_\kb ( \CCC , X_\Sigma ) 
\longrightarrow 
\Sym_{ / \kb}^\dd ( \CCC ) 
\right ]_\kb
\TT^\dd 
\]
comes from the motivic function
\[
\left [
\HHom_\kb ( \CCC , X_\Sigma )
\longrightarrow (\DDiv_{\CCC / \kb})^\mathfrak I
\right ] 
\in \KVar{(\DDiv_{\CCC / \kb})^\mathfrak I}.
\]
The open subset 
\[
\Sym^\dd_\kb ( \CCC )_{\Sigma,**}
\]
is given by the condition 
\[
\forall \, J \in B_\Sigma ,
\quad
\bigcap_{j\in J} \Supp ( D_j ) = \varnothing 
\]
where 
\[
B_\Sigma 
=
\{ 
J \subset \Sigma ( 1 ) 
\mid 
\forall \, \sigma \in \Sigma ,
\; 
J \cap ( \Sigma ( 1 ) - \sigma ( 1 ) ) \neq \varnothing 
\}  
\]
is the 
set of $J\subset \Sigma ( 1 ) = \mathfrak I$
such that 
\[
\cap_{\alpha \in J} D_\alpha = \varnothing .
\]

Then for all $\dd \in \NN^\mathfrak I$
\[
\Sym^\dd_\kb ( \CCC )_\Sigma
=
\Sym^\dd_\kb ( \CCC )_{\Sigma,*}
\cap
\Sym^\dd_\kb ( \CCC )_{\Sigma,**}
\]
viewed above $\Sym^\dd_\kb ( \CCC )$ via the inclusion map.

\subsection{Möbius functions in the classical case}
The following does not depend on the fact that we take $\mathfrak I = \Sigma ( 1 ) $ in our application.

The local Möbius function 
\[
\mu_{B_\Sigma} : \NN^\mathfrak I
\to \ZZ 
\]
is defined by the relation 
\[
\mathbf 1_{A ( B_\Sigma ) } ( \nn )
=
\sum_{0 \leqslant \nn ' \leqslant \nn} \mu_{B_\Sigma} ( \nn ' ) 
\]
where $A ( B_\Sigma )$ is the set of tuples not lying above $B_\Sigma$.
The global Möbius function 
is the motivic function on $\DDiv^\mathfrak I_{ \CCC / \kb}$
defined by the coefficients of the motivic Euler product 
\[
\prod_{p\in \CCC}
\left ( 
\sum_{\ee \in \NN^\mathfrak I} \mu_{B_\Sigma} ( \ee ) \TT^\ee 
\right ) .
\]
More precisely, for every $\dd \in \NN^\mathfrak I$
we see the degree $\dd$ coefficient of this motivic Euler product 
as a class $\mu_\Sigma ( \dd ) \in \KVar {\Sym^\dd_{/\kb} ( \CCC )}$
and we see the entire family of coefficients as a class
\[
\mu_\Sigma \in \KVar{(\DDiv_{\CCC / \kb} )^\mathfrak I} 
\]
that is to say a \emph{motivic function on $( \DDiv_{\CCC / \kb} )^\mathfrak I $} defined by the relation 
\[
\left [
\begin{tikzcd}
	\mu_\Sigma
	\boxtimes_{( \DDiv_{\CCC / \kb} )^\mathfrak I}
	( \DDiv_{\CCC / \kb} )^\mathfrak I \dar \\
	( \DDiv_{\CCC / \kb} )^\mathfrak I \times_\kb ( \DDiv_{\CCC / \kb} )^\mathfrak I \dar["+"]\\
	( \DDiv_{\CCC / \kb} )^\mathfrak I
\end{tikzcd}
\right ]
=
\left [
\begin{tikzcd}
( \DDiv_{\CCC / \kb} )^\mathfrak I_{X,**} \dar \\
	( \DDiv_{\CCC / \kb} )^\mathfrak I
	\end{tikzcd}
\right ]
.\]

\begin{mylemma}
    The series
    \[
    \sum_{\dd \in \NN^{\mathfrak I }} \mu_\Sigma ( \dd ) \LL_\kb^{- | \dd |}
    \]
    converges 
    in $\widehat{\mathscr M_\kb}^{\dim}$
    to the motivic Euler product 
    \[
    \prod_{p\in \CCC}
    \left ( 
    \left (
        1 - \LL_{\kappa p } 
    \right )^{\rk ( \Pic ( X ) )}
    \frac{[X_{\kappa ( p )}]}{\LL_{\kappa ( p )}^{\dim ( X )}}
    \right ) .
    \]
\end{mylemma}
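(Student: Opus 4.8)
The plan is to separate the statement into a convergence claim, settled by the dimension bound of \cref{lemma:convergence-criterion-Z-coeff-multivariable}, and the identification of the limit, which reduces to an explicit computation of the local factor $\sum_{\ee\in\NN^{\mathfrak I}}\mu_{B_\Sigma}(\ee)\,\TT^\ee$ of the motivic Euler product defining $\mu_\Sigma$.

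For convergence, I would first read off the low-degree values of the local Möbius function. Since every ray of $\Sigma$ spans a cone, no singleton $\{i\}$ lies in $B_\Sigma$; hence a tuple supported on a single coordinate never lies above $B_\Sigma$, so $\mathbf 1_{A(B_\Sigma)}(k\,e_i)=1$ for all $k\geqslant 0$, and the defining relation $\mathbf 1_{A(B_\Sigma)}(\nn)=\sum_{\mathbf 0\leqslant\nn'\leqslant\nn}\mu_{B_\Sigma}(\nn')$ forces $\mu_{B_\Sigma}(\mathbf 0)=1$ and $\mu_{B_\Sigma}(k\,e_i)=0$ for every $k\geqslant 1$ and every $i\in\mathfrak I$. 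Thus $\sum_\ee\mu_{B_\Sigma}(\ee)\TT^\ee\in 1+\sum_{i\neq j}t_it_j\,\ZZ[[\TT]]$, so \cref{lemma:convergence-criterion-Z-coeff-multivariable}, applied with the curve $\CCC$ (of dimension one) in the role of $X$, bounds the dimension of $\mu_\Sigma(\dd)$ by $|\dd|/2$. Therefore $\dim_\kb\!\big(\mu_\Sigma(\dd)\,\LL_\kb^{-|\dd|}\big)\leqslant-|\dd|/2$, which tends to $-\infty$, and $\sum_{\dd}\mu_\Sigma(\dd)\,\LL_\kb^{-|\dd|}$ converges in $\widehat{\MMM_\kb}^{\dim}$.

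For the value, I would compute the local factor. A tuple $\nn$ lies in $A(B_\Sigma)$ precisely when its support contains no element of $B_\Sigma$, that is, when $\{i:n_i\geqslant 1\}$ is contained in $\sigma(1)$ for some $\sigma\in\Sigma$; as $\Sigma$ is regular this means $\{i:n_i\geqslant 1\}$ is exactly the ray set of a cone $\tau\in\Sigma$, whence $\sum_{\nn\in A(B_\Sigma)}\TT^\nn=\sum_{\tau\in\Sigma}\prod_{i\in\tau(1)}\tfrac{t_i}{1-t_i}$. Rewriting the Möbius relation as $\sum_\ee\mu_{B_\Sigma}(\ee)\TT^\ee=\prod_{i\in\Sigma(1)}(1-t_i)\cdot\sum_{\nn}\mathbf 1_{A(B_\Sigma)}(\nn)\TT^\nn$ and setting every $t_i=\LL_\kb^{-1}$ yields $(1-\LL_\kb^{-1})^{\#\Sigma(1)}\sum_{\tau\in\Sigma}(\LL_\kb-1)^{-\#\tau(1)}$; using $\#\tau(1)=\dim\tau$, the equality $\rk\Pic(X_\Sigma)=\#\Sigma(1)-\dim X_\Sigma$, and the classical torus-orbit decomposition $[X_\Sigma]=\sum_{\tau\in\Sigma}(\LL_\kb-1)^{\dim X_\Sigma-\dim\tau}$, one checks that this equals $(1-\LL_\kb^{-1})^{\rk\Pic(X_\Sigma)}\,[X_\Sigma]\,\LL_\kb^{-\dim X_\Sigma}$. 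The same manipulation, being an identity of rational functions in a single variable, remains valid with $\LL_\kb$ replaced by $\LL_{\kappa(p)}$ at each point $p\in\CCC$.

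The remaining step — and the one I expect to require the most care — is to convert this identity of local factors into the asserted identity of motivic Euler products: one must know that forming the motivic Euler product over $\CCC$ commutes with pushing coefficients forward along $\Sym^\dd_{/\kb}(\CCC)\to\Spec\kb$ and with the substitution $t_i\mapsto\LL_\kb^{-1}$ (legitimate once it yields a convergent series), so that $\sum_{\dd}\mu_\Sigma(\dd)\,\LL_\kb^{-|\dd|}$ is identified with $\prod_{p\in\CCC}\big(\sum_\ee\mu_{B_\Sigma}(\ee)\,\LL_{\kappa(p)}^{-|\ee|}\big)$. I would justify this through Bilu's formalism of motivic Euler products, the multiplicativity of \cref{ptn-Eul-prod-mult}, and the interpretation recalled above of Euler products on $\CCC$ as motivic functions on $\DDiv_{\CCC/\kb}$; substituting the local factor computed in the previous paragraph then gives the stated product over $p\in\CCC$.
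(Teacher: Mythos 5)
Your proposal is correct and follows essentially the same route as the paper, which simply cites \cite{bilu-das-howe2022hadamard} and \cite{faisant2023motivic-distribution} and notes that the key point for a general curve $\CCC$ is that the Möbius generating series has valuation at least $2$ — exactly the fact you prove directly before invoking \cref{lemma:convergence-criterion-Z-coeff-multivariable}, while your local-factor computation via the torus-orbit decomposition reproduces the content of the cited Remark 5.2. Note only that your computation lands on $\left(1-\LL_{\kappa(p)}^{-1}\right)^{\rk(\Pic(X))}$ rather than the $\left(1-\LL_{\kappa(p)}\right)^{\rk(\Pic(X))}$ printed in the statement, which is the intended (and correct) normalisation used elsewhere in the paper.
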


\begin{proof}
	See \cite[Remark 4.5.7]{bilu-das-howe2022hadamard} or Proposition 5.1 and Remark 5.2 in \cite{faisant2023motivic-distribution}.
	In both references $\CCC = \PP^1_\kb$, but actually the argument remains valid if $\CCC$ is arbitrary:
	it is sufficient to remark that the valuation of  
	\[
	\sum_{\mm \in \NN^\mathfrak I \setminus \{ \bm 0 \}}
	\mu_{B_\Sigma} ( \bm m ) \mathrm T^{|\bm m|}
	\in \ZZ [ \mathrm T ]
	\] 
	is at least equal to $2$ \cite[Lemme 3.8]{bourqui2009produit} so that the associated motivic Euler product converges at $T=\LL_\kb^{-1}$ 
	in $\widehat{\MMM_\kb}^{\dim}$.  
\end{proof}

\subsection{An example: morphisms from $\CCC $ to $\PP^1_\kb$}
In that case, $\Pic ( \PP^1_\kb ) = \ZZ$ and ${\Sigma ( 1 ) = \{ 0 , \infty  \}}$. The condition on linear classes 
means that 
we sum over points lying above the diagonal of the morphism
\[
\begin{array}{rll}
 \DDiv_{\CCC / \kb} \times_\kb \DDiv_{\CCC / \kb} 
& \longrightarrow
&\PPic_{\CCC / \kb} \times_\kb \PPic_{\CCC / \kb} \\
 ( D_0 , D_\infty ) & \longmapsto  & ( [\OOO_\CCC ( D_0 )] ,  [\OOO_\CCC ( D_\infty )] )  .
\end{array}
\]
The 
fibre above $( [L] , [L] ) \in \PPic_{\CCC / \kb}^d \times_\kb \PPic_{\CCC / \kb}^d$, for a certain $d \in \NN^*$,
of our parameter space 
is the subspace of tuples of sections
\[
( H^0 ( \CCC , L  )^*
\times 
H^0 ( \CCC , L )^* )_*
\]
where the lower $*$ stands for the coprimality conditions.
After a Möbius inversion, the motivic sum one has to evaluate will be of the form 
\[
\sum_{[L]\in \PPic_{\CCC / \kb}} 
\sum_{ (  E_0 , E_1 ) \in \DDiv_{\CCC / \kb }} 
\mu_{\PP^1_\kb} ( E_0 , E_1 )
\left [H^0 \left ( \CCC , L \otimes \OOO_\CCC ( -E_0 ) \right )^*\right ]
\left [
H^0 \left ( \CCC , L \otimes \OOO_\CCC ( - E_1 ) \right )^* \right ]
T^{\deg ( L ) } .
\]
where $[ L_0 ]$ and $[ L_1 ]$ are linear classes of two effective divisors $E_0,E_1$
to which the motivic Möbius function is applied.

Concretely, the computation goes as follows. 
\begin{align*}
& ( \LL_\kb - 1 ) \mathcal Z_{\PP^1_\kb} ( T ) \\
& = 
\sum_{
[L]\in \Pic ( \CCC )
}
\left [ ( H^0 ( \CCC , L  )^*
\times 
H^0 ( \CCC , L )^* )_* \right ] T^{\deg ( L ) } \\
& 
= 
\sum_{
\substack{
(E_0,E_1) \in \Div_+ ( \CCC )^2 \\ [L] \in \Pic ( \CCC ) 
}
}
\mu ( E_0 , E_1 ) 
[ H^0 ( \CCC , L \otimes \mathscr O ( - E_0 ) )^*
\times 
H^0 ( \CCC , L \otimes \mathscr O ( - E_1 ) )^*  ]
T^{\deg ( L )}
\end{align*}
Now we can replace $ [ H^0 ( \CCC , L \otimes \mathscr O ( - E_i ) )^* ]$
by 
\[
\LL_\kb^{h^0 ( \CCC , L \otimes \mathscr O ( - E_i ) )} - 1
\]
for $i=0,1$ and any $[L] \in \Pic ( \CCC )$ such that $\deg ( L ) - \deg ( E_i ) \geqslant 0$
(remember 
that we stratified $\Pic^d ( \CCC ) $
to get locally free sheaves as well for small degrees, so that our computations make sense).
If the later condition is not satisfied, the class is actually zero. 
By Riemann-Roch, we know that 
\[
h^0 ( \CCC , L \otimes \mathscr O ( - E_i ) )
= \deg ( L ) - \deg ( E_i ) + 1 - g 
\]
whenever $ \deg ( L ) - \deg ( E_i )  > 2g - 2$.

In the end the main normalised term of $\mathcal Z_{\PP^1_\kb} ( T ) $,
that is to say its coefficient of degree $d$ 
normalized by  
$\LL_\kb^{2d}$,
equals 
\[
\LL_\kb^{1-g}
\frac{
[ \Pic^0 ( \CCC ) ] \LL_\kb^{1-g}}
{\LL_\kb - 1 } 
\sum_{\bm{E} \in \left (\DDiv_{\CCC / \kb}\right )^2}
\mu ( \bm E ) \LL_\kb^{-\deg ( \bm E )}
\]
up to a negligeable error term, as $d \to \infty$.
This is exactly what we want, since 
\[
\sum_{\bm{E} \in \left (\DDiv_{\CCC / \kb}\right )^2}
\mu ( \bm{E} ) \LL_\kb^{-\deg ( \bm E )}
=
\prod_{p\in \CCC} \left ( 1 - \LL_\kp^{-2} \right )
=
\prod_{p\in \CCC} \left  ( 1 - \LL_\kp^{-1} \right ) 
\frac{[\PP^1_{\kappa ( p )}]}{\LL_{\kappa ( p )}}
\]
is the motivic Tamagawa number of $\PP^1_{\kb ( \CCC )}$ 
with respect to its isotrivial model $\PP^1_\kb \times_\kb \CCC$.

\subsection{The classical case}
For the convenience of the reader, 
we present the first steps 
of the decomposition of the motivic Height zeta function without taking Campana-type conditions into account. 
Recall that $\widehat M$, $\widetilde M$, $\widehat X$ and $\widetilde X$ were defined \cpageref{definition-X-hat}
and that they allow us to parametrise schematic points of $\HHom_\kb ( \CCC , X_\Sigma )$
and their lift to the universal torsor $\mathcal T_\mathcal L$
thanks to \cref{thm-parametrisation-finale},
at least 
piecewisely with respect to $(\PPic_{\CCC / \kb})^\mathfrak I$.

\begin{myptn}Let $\underline L =(L_i)_{i\in \mathfrak I} \in ( \PPic_{\CCC / \kb} )^\mathfrak I$.
	Then in $\KVar{\kappa ( \underline L )}$ 
	\[
\left [ \widehat X_{\underline L} \to  \prod_{i\in \mathfrak I} \LinSys_{L_i / \CCC} \right ]_{\kappa ( \underline L )}
= 
\sum_{
\substack{
\bm E \in ( \DDiv_{\CCC / \kb} )^\mathfrak I\\
\bm F \in ( \DDiv_{\CCC / \kb} )^\mathfrak I\\
\bm E + \bm F \in \prod_{i\in \mathfrak I} \LinSys_{L_i / \CCC}
}
}
\mu_X ( \bm E ) 
\left 
[
\prod_{i\in \mathfrak I }
H^0 ( \CCC , \OOO_\CCC ( F_i ) ) \setminus \{ \bm 0 \}
\right ] .
	\]
\end{myptn}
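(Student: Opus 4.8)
The plan is to derive the identity from the toric specialization of \cref{thm-parametrisation-finale} — at the level of $\widehat M$ and $\widehat X$, where no torus quotient is involved — together with a Möbius inversion over the coprimality conditions, performed fibrewise over $(\PPic_{\CCC/\kb})^{\mathfrak I}$. First I would make $\widehat X_{\underline L}$ explicit. One may assume $\underline L$ lies in the locus $(\PPic_{\CCC/\kb})^{\mathfrak I}_X$ cut out by the linear conditions, since otherwise there is nothing to prove. Over $\kappa(\underline L)$ each $L_i$ belongs to a single stratum of the flattening stratification of \cref{lemma-flattening-stratification-of-finitely-presented-sheaves} applied to the Poincaré sheaf, so there $\mathscr Q_i$ is locally free, $\PP(\mathscr Q_i)$ is a genuine projective bundle, and by \eqref{equation-caracterisation-of-V(Q)-specialised} together with \cref{proposition-divisors-as-proj} the fibre of $\widehat M$ over $\underline L$ is identified with the $\kappa(\underline L)$-scheme $\prod_{i\in\mathfrak I}\bigl(H^0(\CCC_{\kappa(\underline L)},L_i)\setminus\{\bm 0\}\bigr)$, its structure morphism to $\prod_{i\in\mathfrak I}\LinSys_{L_i/\CCC}$ being $(s_i)\mapsto(\ddiv s_i)$. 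Since $X=X_\Sigma$ is toric one has $\mathcal I_X=0$, so by \cref{definition-X-hat} the fibre $\widehat X_{\underline L}$ is exactly the locally closed subscheme carved out by the coprimality conditions of \cref{def-coprime-condition-divisors}: the locus of tuples $(s_i)$ with every $s_i\neq 0$ and $\bigcap_{j\in J}\Supp(\ddiv s_j)=\varnothing$ for all $J\in B_X$.

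Next I would trade the coprimality constraint for the global Möbius function $\mu_X$. Spreading the combinatorial identity $\mathbf 1_{A(B_X)}(\nn)=\sum_{\bm 0\leqslant\nn'\leqslant\nn}\mu_{B_X}(\nn')$ over the points of $\CCC$, through the identifications $\DDiv^d_{\CCC/\kb}\simeq\Sym^d_{/\kb}(\CCC)$ and their compatibility with the addition of divisors recalled in \cref{section-picard}, is precisely the defining relation of $\mu_X$: convolved along $+\colon(\DDiv_{\CCC/\kb})^{\mathfrak I}\times(\DDiv_{\CCC/\kb})^{\mathfrak I}\to(\DDiv_{\CCC/\kb})^{\mathfrak I}$ with the constant motivic function $1$, it equals the indicator of $(\DDiv_{\CCC/\kb})^{\mathfrak I}_{X,**}$. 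Concretely, for an effective tuple $\bm D$ one has $\mathbf 1_{\bm D\text{ coprime}}=\sum_{\bm 0\leqslant\bm E\leqslant\bm D}\mu_X(\bm E)$, a finite sum in which $\mu_X(\bm E)=\prod_{p\in\CCC}\mu_{B_X}(\ord_p\bm E)$ is an honest integer, all but finitely many local factors being $\mu_{B_X}(\bm 0)=1$. Applying this with $\bm D=(\ddiv s_i)$, taking classes in $\KVar{\kappa(\underline L)}$, and interchanging the degreewise finite sum over $\bm E$ with the motivic class yields
\[
\bigl[\widehat X_{\underline L}\bigr]=\sum_{\bm E}\mu_X(\bm E)\,\Bigl[\bigl\{(s_i):s_i\neq 0,\ \ddiv(s_i)\geqslant E_i\text{ for all }i\bigr\}\Bigr].
\]

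It then remains to evaluate the individual terms. For a fixed effective tuple $\bm E$, dividing by the canonical section $s_{E_i}\in H^0(\CCC,\OOO_\CCC(E_i))$ with divisor $E_i$ gives an isomorphism
\[
\bigl\{s_i:s_i\neq 0,\ \ddiv(s_i)\geqslant E_i\bigr\}\ \xrightarrow{\ \sim\ }\ H^0\bigl(\CCC,L_i\otimes\OOO_\CCC(-E_i)\bigr)\setminus\{\bm 0\},\qquad s_i\longmapsto s_i/s_{E_i},
\]
under which $\ddiv(s_i)=E_i+F_i$ with $F_i:=\ddiv(s_i/s_{E_i})$ the residual divisor, so that $E_i+F_i\in\LinSys_{L_i/\CCC}$ and $\OOO_\CCC(F_i)\simeq L_i\otimes\OOO_\CCC(-E_i)$ — the $\bm E$-term vanishing exactly when $H^0(\CCC,L_i\otimes\OOO_\CCC(-E_i))=0$ for some $i$, which is also precisely when there is no effective $F_i$ with $E_i+F_i\in\LinSys_{L_i/\CCC}$. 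Substituting $\OOO_\CCC(F_i)$ for $L_i\otimes\OOO_\CCC(-E_i)$ and re-indexing the sum over $\bm E$ by the pairs $(\bm E,\bm F)$ subject to $\bm E+\bm F\in\prod_{i\in\mathfrak I}\LinSys_{L_i/\CCC}$, with $\bm F$ recording the residual divisors, transforms the expression into the asserted right-hand side.

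The genuinely delicate step is the middle one: upgrading the pointwise combinatorial Möbius identity to an identity of motivic functions on $(\DDiv_{\CCC/\kb})^{\mathfrak I}$ and checking its compatibility with base change to the Abel–Jacobi fibre over $\underline L$. This is where one leans on the motivic Euler product formalism of \cref{section-rings-varieties} — notably multiplicativity, \cref{ptn-Eul-prod-mult} — on the identification $\DDiv^\bullet_{\CCC/\kb}\simeq\Sym^\bullet_{/\kb}(\CCC)$ of \cref{section-picard}, and on the integrality of the numbers $\mu_X(\bm E)$. Everything else is routine: the interchange of a degreewise finite sum with a motivic class, and the elementary observation that the space of nonzero sections of $L_i$ vanishing along $E_i$ is, after dividing out $s_{E_i}$, the space $H^0(\CCC,\OOO_\CCC(F_i))\setminus\{\bm 0\}$ attached to the residual divisor $F_i$.
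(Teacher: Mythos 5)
Your proof is correct and follows essentially the same route as the paper's: the defining convolution identity of the global Möbius function $\mu_X$ against the constant family $\widehat M$ over $(\DDiv_{\CCC/\kb})^{\mathfrak I}$, together with the identification of the fibres of $\widehat M \to (\PPic_{\CCC/\kb})^{\mathfrak I}$ with the punctured section spaces; the paper merely stays at the level of relative motivic functions throughout (so it never needs the explicit twist $s_i \mapsto s_i/s_{E_i}$, which is absorbed into the statement that $\widehat M$ is constant over $(\DDiv_{\CCC/\kb})^{\mathfrak I}$), whereas you work fibrewise and make the re-indexing by residual divisors explicit.

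One small caveat: your opening reduction ``one may assume $\underline L \in (\PPic_{\CCC/\kb})^{\mathfrak I}_X$, otherwise there is nothing to prove'' is not right as stated — for $\underline L$ violating the linear conditions the right-hand side is in general a nonzero class (it only sees the coprimality conditions and the linear systems), so if $\widehat X$ carried the linear conditions of \cref{definition-X-hat} literally the identity would fail there. In the proposition (as in the paper's own proof, which pulls back only to $(\DDiv_{\CCC/\kb})^{\mathfrak I}_{X,**}$) $\widehat X_{\underline L}$ must be read with the coprimality conditions alone, the linear conditions being imposed afterwards when summing over $(\PPic_{\CCC/\kb})^{\mathfrak I}_X$; with that reading your argument applies verbatim to every $\underline L$ and the reduction is unnecessary.
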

\begin{proof}
	The relation is trivial 
	if one of the $L_i$'s is not effective, 
	so we can assume that it is not the case. 
	Then for any $\bm F \in ( \DDiv_{\CCC / \kb} )^\mathfrak I$,
	one sees 
	\[
	\prod_{i\in \mathfrak I }
H^0 ( \CCC , \OOO_\CCC ( F_i )) 
	\]
	as the fibre of 
	\[
	\widehat M =
\prod_{i\in \mathfrak I} \OOO_{\PP ( \mathscr Q )} ( -1 ) ^\times 
\to \prod_{i\in \mathfrak I} \PP ( \mathscr Q ) \simeq ( \DDiv_{\CCC / \kb } )^\mathfrak I \to ( \PPic_{\CCC / \kb } )^\mathfrak I
	\]
	above  $ ( [ \OOO_\CCC (  F_i ) ])_{i\in \mathfrak I}$
	so that we understand the right member of the equality 
	as the motivic sum of 
	the restriction of the motivic function
	\[
	\begin{tikzcd}
		\mu_X \boxtimes_{( \DDiv_{\CCC / \kb} )^\mathfrak I} \widehat M \dar \\
( \DDiv_{\CCC / \kb} )^\mathfrak I 
\times_\kb ( \DDiv_{\CCC / \kb} )^\mathfrak I  \dar["+"]  \\
( \DDiv_{\CCC / \kb} )^\mathfrak I 	
\end{tikzcd}
	\]
to the constructible subset 
$\prod_{i\in \mathfrak I} \LinSys_{L_i} \hookrightarrow ( \DDiv_{\CCC / \kb} )^\mathfrak I $.
Then the identity follows from the definition of the Möbius function $\mu_X$ telling us exactly that
\[
\left [
\begin{tikzcd}
	\mu_X 
	\boxtimes_{( \DDiv_{\CCC / \kb} )^\mathfrak I}
	( \DDiv_{\CCC / \kb} )^\mathfrak I \dar \\
	( \DDiv_{\CCC / \kb} )^\mathfrak I \times_\kb ( \DDiv_{\CCC / \kb} )^\mathfrak I \dar["+"]\\
	( \DDiv_{\CCC / \kb} )^\mathfrak I
\end{tikzcd}
\right ]
=
\left [
\begin{tikzcd}
( \DDiv_{\CCC / \kb} )^\mathfrak I_{X,**} \dar \\
	( \DDiv_{\CCC / \kb} )^\mathfrak I
	\end{tikzcd}
\right ]
\]
together with the following facts: $\widehat X$ is by definition the pull-back of $\widehat M$ 
to $( \DDiv_{\CCC / \kb} )^\mathfrak I_{X,**}$
and $\widehat M$ is constant as a motivic function on $( \DDiv_{\CCC / \kb} )^\mathfrak I$. 
\end{proof}

Summing over $(\PPic_{\CCC / \kb})^\mathfrak I_X$, 
a coarse consequence of this identity is the expression
\begin{equation}
\frac{\mathcal Z_{X_\Sigma} ( \TT )}{( \LL_\kb - 1 )^{\dim ( X_\Sigma )} }
= 
\sum_{
\substack{
\dd \in \CEff ( X_\Sigma )^\vee \cap \Pic ( X_\Sigma )^\vee  \\
( d ' , d'' ) \in \NN^\mathfrak I \times \NN^\mathfrak I\\
d = d' + d'' }
}
\left [
\Sym^{d'}_\kb ( \CCC ) \boxtimes 
\mu_\Sigma  ( d'' ) 
\to \Sym^\dd_\kb ( \CCC )_{\Sigma,*}
\right ]
\TT^d 
\end{equation}
for the motivic zeta function of $X_\Sigma$.
But for now we keep working over a given class $\underline L \in (\PPic_{\CCC / \kb})^\mathfrak I$ in order to obtain a finer result. 

\bigskip

Let us replace for a while 
\[
\left [
\prod_{i\in \mathfrak I }
H^0 ( \CCC , \OOO_\CCC ( F_i ) ) \setminus \{ \bm 0 \}
\right ]
\]
by 
\[
\prod_{i \in \mathfrak I}
\left ( 
\LL_{\kappa ( \underline L )}^{
\deg ( L_i ) - \deg ( E_i ) - g  + 1 
} - 1 
\right )
\]
whenever it makes sense. 
Then the main term will be of magnitude 
\begin{align*}
& \LL_{\kappa ( \underline L )}^{
\sum_{
i \in \mathfrak I 
}
\deg ( L_i ) - \deg ( E_i ) - g  + 1 } \\
& =
\LL_{\kappa ( \underline L )}^{  - \deg ( \bm E )  }
\LL_{\kappa ( \underline L )}^{ \deg ( \underline L ) } 
\LL_{\kappa ( \underline L )}^{| \mathfrak I | (1-g)}
\end{align*}
and 
the main 
term will be 
\[
\sum_{
\substack{
\bm E \in ( \DDiv_{\CCC / \kb} )^\mathfrak I_{\kappa ( \underline L )}\\
\bm 0 \leqslant \degg ( \bm E ) \leqslant \degg ( \underline L )
}
}
\mu_\Sigma ( \bm  E )
\LL_{\kappa ( \underline L )}^{  - \deg ( \bm E )  }
\LL_{\kappa ( \underline L )}^{ \deg ( \underline L ) } 
\LL_{\kappa ( \underline L )}^{| \mathfrak I | (1-g)}
\]
which after normalisation by 
$\LL_{\kappa ( \underline L )}^{ \deg ( \underline L ) } 
\LL_{\kappa ( \underline L )}^{| \mathfrak I | (1-g)}$
and as 
\[
\degg ( \underline L ) \to \infty 
\]
will become 
\[
\sum_{\bm E \in ( \DDiv_{\CCC / \kb} )^\mathfrak I_{\kappa ( \underline L )}}
\mu_\Sigma ( \bm E )
\LL_{\kappa ( \underline L )}^{ - \deg ( \bm E ) }
\]
which is known to be convergent: it is equal to
the base-change to ${\kappa ( \underline L )}$ of 
the motivic Euler product 
\[
\prod_{v\in \CCC } 
\left (
1 - \LL_v^{-1}
\right )^r \frac{[ X_\Sigma ]}{\LL_v^n} .
\]
In general, it is natural to write 
\[
\left [ \widehat X_{\underline L} \right ]	
= 
\sum_{A \subset \mathfrak I}
( - 1 )^{| \mathfrak I \setminus A |}
\sum_{
\substack{
\bm E \in ( \DDiv_{\CCC / \kb} )^\mathfrak I_{\kappa ( \underline L )}\\
\bm 0 \leqslant \degg ( \bm E ) \leqslant \degg ( \underline L )
}
}
\mu_\Sigma ( \bm  E )
\prod_{i\in A}
\LL_{\kappa ( \underline L )}
^{h^0 ( L_i \otimes \OOO_\CCC ( - E_i ) )}
\]
and the main contribution comes from $A = \mathfrak I$. 
Now let us prove this claim rigorously. Naively, 
\[
\LL_{\kappa ( \underline L )}
^{h^0 ( L_i \otimes \OOO_\CCC ( - E_i ) )}
=
\LL_{\kappa ( \underline L )}
^{  \deg ( L_i ) - \deg ( E_i ) + 1 - g}
+ 
\underbrace{
\LL_{\kappa ( \underline L )}
^{h^0 ( L_i \otimes \OOO_\CCC ( - E_i ) )}
- 
\LL_{\kappa ( \underline L )}
^{  \deg ( L_i ) - \deg ( E_i ) + 1 - g }
}_{\substack{=0 \\
\text{ whenever $ \deg ( E_i ) < \deg ( L_i ) + 2(1 - g )$}}}
\]
so that we can write 
\begin{align}
& \sum_{
\substack{
\bm E \in ( \DDiv_{\CCC / \kb} )^\mathfrak I_{\kappa ( \underline L )}\\
\bm 0 \leqslant \degg ( \bm E ) \leqslant \degg ( \underline L )
}
}
\mu_\Sigma ( \bm  E )
\prod_{i\in A}
\LL_{\kappa ( \underline L )}
^{h^0 ( L_i \otimes \OOO_\CCC ( - E_i ) )}\notag\\
& = 
\sum_{
\substack{
\bm E \in ( \DDiv_{\CCC / \kb} )^\mathfrak I_{\kappa ( \underline L )}\\
\bm 0 \leqslant \degg ( \bm E ) \leqslant \degg ( \underline L )
}
}
\mu_\Sigma ( \bm  E )
\LL_{\kappa ( \underline L )}
^{\sum_{i\in A}  \deg ( L_i ) - \deg ( E_i ) + 1 - g }
\label{equation-first-term}
\\
& + 
\sum_{
\substack{
\bm E \in ( \DDiv_{\CCC / \kb} )^\mathfrak I_{\kappa ( \underline L )}\\
\bm 0 \leqslant \degg ( \bm E ) \leqslant \degg ( \underline L )
}
}
\mu_\Sigma ( \bm  E )
\left ( 
\prod_{i\in A}
\LL_{\kappa ( \underline L )}
^{h^0 ( L_i \otimes \OOO_\CCC ( - E_i ) )}
- 
\prod_{i\in A}
\LL_{\kappa ( \underline L )}
^{  \deg ( L_i ) - \deg ( E_i ) + 1 - g }
\right )
\label{equation-second-term}
\end{align}
If one divides the first term \eqref{equation-first-term} by $\LL_{\kappa ( \underline L )}^{| \degg ( \underline L ) |}$,
one gets 
\[
\LL_{\kappa ( \underline L )}^{ |A| ( 1 - g ) }
\sum_{
\substack{
\bm E \in ( \DDiv_{\CCC / \kb} )^\mathfrak I_{\kappa ( \underline L )}\\
\bm 0 \leqslant \degg ( \bm E ) \leqslant \degg ( \underline L )
}
}
\mu_\Sigma ( \bm  E )
\LL_{\kappa ( \underline L )}
^{ |\deg ( \underline L )| - \degg ( \bm E )}
\LL_{\kappa ( \underline L )}
^{\sum_{i\notin A} \deg ( E_i ) - \deg ( L_i )}
\]
which is known to converge when $A = \mathfrak I $,
with error term bounded by 
\[
- \frac{1}{2} \min_{i\in \mathfrak I} ( \deg ( L_i ) + 1 )
\]
and has dimension bounded by 
\[
- \frac{1}{4} \min_{i \in A} \deg ( L_i ) +  |A| ( 1 - g ) 
\]
if $A \neq \mathfrak I $.

The second term \eqref{equation-second-term} can be decomposed again with respect to the number of components of $\bm E$ having small degree:
\[
\sum_{\varnothing \neq A ' \subset A}
\sum_{
\substack{
\bm E \in ( \DDiv_{\CCC / \kb} )^\mathfrak I_{\kappa ( \underline L )}\\
\bm 0 \leqslant \degg ( \bm E ) \leqslant \degg ( \underline L )\\
\deg ( L_i ) + 2 ( 1 - g )  \leqslant \deg ( E_i )  \\ \Leftrightarrow i \in A'
}
}
\mu_\Sigma ( \bm  E )
\left ( 
\prod_{i\in A}
\LL_{\kappa ( \underline L )}
^{h^0 ( L_i \otimes \OOO_\CCC ( - E_i ) )}
- 
\prod_{i\in A}
\LL_{\kappa ( \underline L )}
^{  \deg ( L_i ) - \deg ( E_i ) + 1 - g }
\right )
\]
where it is now possible to rewrite the term of the motivic sum corresponding to  ${\varnothing \neq A ' \subset A}$ as 
\[
\mu_\Sigma ( \bm E )
\LL_{\kappa ( \underline L )}^{\sum_{i\in A \setminus A'} \deg ( L_i ) - \deg ( E_i ) + 1 - g}
\left ( 
\prod_{i\in A '}
\LL_{\kappa ( \underline L )}
^{h^0 ( L_i \otimes \OOO_\CCC ( - E_i ) )}
- 
\prod_{i\in A '}
\LL_{\kappa ( \underline L )}
^{  \deg ( L_i ) - \deg ( E_i ) + 1 - g }
\right ) 
\]
and which after normalisation by $\LL_{\kappa ( \underline L )}^{| \degg ( \underline L ) |}$
becomes 
\begin{align*}
& \LL_{\kappa ( \underline L )}^{| A \setminus A ' | ( 1 - g )}
\mu_\Sigma ( \bm E ) 
\left ( 
\prod_{i\in A '}
\LL_{\kappa ( \underline L )}
^{h^0 ( L_i \otimes \OOO_\CCC ( - E_i ) )}
- 
\prod_{i\in A '}
\LL_{\kappa ( \underline L )}
^{  \deg ( L_i ) - \deg ( E_i ) + 1 - g }
\right ) 
\LL_{\kappa ( \underline L )}^{- | \degg ( \bm E )|}\\
& \times 
\LL_{\kappa ( \underline L )}^{\sum_{i\in A'} \deg ( E_i ) - \deg ( L_i )}
\LL_{\kappa ( \underline L )}^{\sum_{i\notin A} \deg ( E_i ) - \deg ( L_i )}\\
& =
\LL_{\kappa ( \underline L )}^{| A  | ( 1 - g )}\mu_\Sigma ( \bm E ) 
\underbrace{\left ( 
\prod_{i\in A '}
\LL_{\kappa ( \underline L )}
^{h^0 ( L_i \otimes \OOO_\CCC ( - E_i ) ) - \deg ( L_i ) + \deg ( E_i ) + g - 1}
- 
1
\right )}_{ \text{dimension bounded by } | A ' | ( g - 1 )}
\LL_{\kappa ( \underline L )}^{- | \degg ( \bm E )|}
\LL_{\kappa ( \underline L )}^{\sum_{i\notin A} \deg ( E_i ) - \deg ( L_i )}
\end{align*}
which has dimension bounded by 
\[
- \frac{1}{4} \min_{i \in A} \deg ( L_i ) +  |A \setminus A '| ( 1 - g ) 
\]
by \cite[Lemma 2.21]{faisant2023motivic-distribution}. 

\bigskip 

Putting everything together, one gets the following stronger version of the Batyrev-Manin-Peyre conjecture for morphisms $\CCC \to X_\Sigma$.
\begin{mythm}
\label{thm-BMP-C-toric}
	As 
	$\degg ( \underline L ) \to \infty $,
	the value at $\underline L \in \HHom ( \PPic_{X_\Sigma /\kb} , \PPic_{\CCC / \kb} )$ 
	of the motivic function 
	given by the class 
	\[
	\left [ \HHom_\kb^{\degg ( \scdot ) } ( \CCC , X_\Sigma )_U \right ]
	\LL^{- \degg ( \scdot ) \scdot \omega_{X_\Sigma}^{-1}  - n ( 1 - g )}
	\in \MMM_{\HHom ( \PPic_{X_\Sigma /\kb} , \PPic_{\CCC / \kb} )} 
	\]
	tends to (the 
	value at $\underline L $
	of) the
	constant motivic function
	\[
	\left ( 
	\frac{
	\LL^{( 1 - g )}}
	{\LL - 1 }
	\right )^{\rg ( \Pic ( X_\Sigma )) }	 \prod_{p\in \CCC} 
	\left ( 
	1 - \LL 
	\right )^{\rg ( \Pic ( X_\Sigma )) }	
	\frac{[X_\Sigma ]}{\LL^n}
	\in 
	\widehat{\MMM}^{\dim}
	_{\HHom ( \PPic_{X_\Sigma /\kb} , \PPic_{\CCC / \kb} ) }
	\]
	uniformly in $\underline L$,
	meaning that there exist an absolute constant $c$
	such that the error term has virtual dimension bounded by
	\[
	- \frac{1}{4} \min_{i\in \mathfrak I} \deg ( L_i ) + c. 
	\]
\end{mythm}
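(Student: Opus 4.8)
The plan is to reduce the statement, via \cref{thm-parametrisation-finale} and the proposition above, to a single motivic sum over $\mathfrak I$-tuples of effective divisors on $\CCC$, and then to run the motivic analogue of the classical ``Möbius inversion followed by Riemann--Roch'' computation fibrewise over the scheme of Picard classes $\HHom(\PPic_{X_\Sigma/\kb},\PPic_{\CCC/\kb})\simeq(\PPic^{\degg(\underline L)}_{\CCC/\kb})_{X_\Sigma}$.

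\emph{First} I would fix a point $\underline L=(L_i)_{i\in\mathfrak I}$ of this scheme and use that $\HHom_\kb(\CCC,X_\Sigma)_U\simeq\widetilde X$ piecewise over $(\DDiv_{\CCC/\kb})^\mathfrak I$, with $\widehat X\to\widetilde X$ a torsor under the split torus $T_{\Pic(X_\Sigma)}$ of dimension $r:=\rg(\Pic(X_\Sigma))$. This gives in $\MMM_{\kappa(\underline L)}$
\[
\bigl[\HHom^{\degg(\underline L)}_\kb(\CCC,X_\Sigma)_U\bigr]_{\kappa(\underline L)}=(\LL_{\kappa(\underline L)}-1)^{-r}\,\bigl[\widehat X_{\underline L}\bigr],
\]
and since $\degg(\underline L)\scdot\omega_{X_\Sigma}^{-1}=\sum_{i\in\mathfrak I}\deg(L_i)=:|\degg(\underline L)|$ for $X_\Sigma$ toric, it suffices to analyse $[\widehat X_{\underline L}]$ normalised by $\LL^{-|\degg(\underline L)|}(\LL-1)^{-r}\LL^{-n(1-g)}$, with $n=\dim X_\Sigma$.

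\emph{Next} I would feed in the proposition above: $[\widehat X_{\underline L}]$ is a $\mu_\Sigma$-weighted motivic sum over pairs of effective tuples $(\bm E,\bm F)$ with $\OOO_\CCC(\bm E+\bm F)\simeq\underline L$ componentwise, of the free classes $\bigl[\prod_{i\in\mathfrak I}H^0(\CCC,\OOO_\CCC(F_i))\setminus\{\bm 0\}\bigr]$. For each admissible $\bm E$ the sum over $\bm F$, which runs through $\prod_{i}|L_i\otimes\OOO_\CCC(-E_i)|$, collapses to $\prod_{i\in\mathfrak I}(\LL_{\kappa(\underline L)}^{h^0(L_i\otimes\OOO_\CCC(-E_i))}-1)$, the cohomological exponents being meaningful once one passes to the strata of the flattening stratification of $\PPic^d_{\CCC/\kb}$, $d\leqslant 2g-1$, fixed in \cref{section-parametrisation-curves-MDS}. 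Expanding the product over subsets $A\subseteq\mathfrak I$ and writing $\LL^{h^0(L_i\otimes\OOO_\CCC(-E_i))}=\LL^{\deg L_i-\deg E_i+1-g}+(\text{a term vanishing when }\deg E_i<\deg L_i-2g+2)$ by Riemann--Roch, the dominant contribution is the one with $A=\mathfrak I$ and all exponents replaced by their Riemann--Roch value, namely
\[
\LL^{|\mathfrak I|(1-g)+|\degg(\underline L)|}\sum_{\bm 0\leqslant\degg(\bm E)\leqslant\degg(\underline L)}\mu_\Sigma(\bm E)\,\LL^{-|\degg(\bm E)|}.
\]
Since $|\mathfrak I|=n+r$ by \eqref{exact-sequence-MDS-Pic}, after the above normalisation and letting $\degg(\underline L)\to\infty$ (so the truncation $\bm 0\leqslant\degg(\bm E)\leqslant\degg(\underline L)$ disappears) this tends, by the lemma on $\mu_\Sigma$ in \cref{section-toric-classic}, to $\left(\tfrac{\LL^{1-g}}{\LL-1}\right)^{r}\prod_{p\in\CCC}(1-\LL)^{r}\tfrac{[X_\Sigma]}{\LL^n}$ in $\widehat{\MMM}^{\dim}$, which is the base change to $\kappa(\underline L)$ of the asserted constant.

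\emph{The main obstacle} is the uniform control of all remaining terms: one must show that every contribution other than the dominant piece has virtual dimension at most $-\tfrac{1}{4}\min_{i\in\mathfrak I}\deg(L_i)+c$ for an absolute constant $c$. I would split these into (i) the terms with $A\subsetneq\mathfrak I$, where the unused factors $\LL^{\deg E_i-\deg L_i}$ for $i\notin A$ force a genuine dimension drop that I would quantify using \cref{lemma:convergence-criterion-Z-coeff-multivariable} together with the fact that the defining series of $\mu_{B_\Sigma}$ has valuation at least $2$ (\cite[Lemme 3.8]{bourqui2009produit}); and (ii) inside $A=\mathfrak I$, the Riemann--Roch error, supported on tuples with some components of small degree, which I would bound via $h^0(L_i\otimes\OOO_\CCC(-E_i))-(\deg L_i-\deg E_i+1-g)=h^1(L_i\otimes\OOO_\CCC(-E_i))\leqslant g$ and \cite[Lemma 2.21]{faisant2023motivic-distribution}, after a further inclusion--exclusion over the set $A'$ of ``small'' components. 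Assembling the two estimates yields the stated uniform bound, hence convergence in $\widehat{\MMM}^{\dim}_{\HHom(\PPic_{X_\Sigma/\kb},\PPic_{\CCC/\kb})}$. I expect the genuinely delicate work to be entirely in the bookkeeping --- keeping every dimension estimate uniform in $\underline L$ while simultaneously respecting the truncation and the Picard-scheme stratification needed for the cohomological exponents to make sense --- rather than in any single conceptual input.
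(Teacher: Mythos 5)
Your proposal is correct and follows essentially the same route as the paper: reduce via \cref{thm-parametrisation-finale} and the proposition on $[\widehat X_{\underline L}]$ to a $\mu_\Sigma$-weighted sum of classes $\prod_i(\LL^{h^0(L_i\otimes\OOO_\CCC(-E_i))}-1)$, expand over subsets $A\subseteq\mathfrak I$, isolate the Riemann--Roch main term for $A=\mathfrak I$, and control the two error sources (proper subsets $A$, and small-degree components $A'$ inside $A$) exactly as in the paper, using the valuation-$2$ property of $\mu_{B_\Sigma}$ and \cite[Lemma 2.21]{faisant2023motivic-distribution} to get the uniform bound $-\tfrac14\min_i\deg(L_i)+c$. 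The bookkeeping with the $T_{\Pic(X_\Sigma)}$-torsor factor $(\LL-1)^{-r}$ and $|\mathfrak I|=n+r$ matches the stated constant.
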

Summing over $\HHom ( \PPic_{X_\Sigma /\kb} , \PPic_{\CCC / \kb} )$, one gets back the usual expected motivic stabilisation over $\Spec ( \kb )$.
\begin{mycor}
For smooth split projective toric varieties,
the answer to \cref{question-motivic-BMP-isotrivial}
is positive:
as $\dd \to \infty$,
the normalised class
\[
	\left [ \HHom_\kb^\dd ( \CCC , X_\Sigma )_U \right ]
	\LL^{- \dd \scdot \omega_{X_\Sigma}^{-1}  - n ( 1 - g )}_\kb
	\in \MMM_{\kb} 
\]
tends to 
\[
\left ( 
	\frac{
	[ \PPic^0_{\CCC / \kb } ]
	\LL_\kb^{( 1 - g )}}
	{\LL_\kb - 1 }
	\right )^{\rg ( \Pic ( X_\Sigma )) }	 \prod_{p\in \CCC} 
	\left ( 
	1 - \LL_p 
	\right )^{\rg ( \Pic ( X_\Sigma )) }	
	\frac{[X_\Sigma ]}{\LL^n_p}
	\in 
	\widehat{
	\MMM_\kb}^{\dim} .
\]
\end{mycor}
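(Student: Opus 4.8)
The plan is to integrate the uniform fibrewise statement of \cref{thm-BMP-C-toric} over the base $\HHom(\PPic_{X_\Sigma/\kb},\PPic_{\CCC/\kb})$, as announced just before the corollary. Write $r=\rk\Pic(X_\Sigma)$ and $n=\dim X_\Sigma$, and for a multidegree $\dd$ let $S_\dd$ be the connected component of $\HHom(\PPic_{X_\Sigma/\kb},\PPic_{\CCC/\kb})$ over which $\HHom_\kb^\dd(\CCC,X_\Sigma)_U$ sits via the Picard-class morphism $f\mapsto f^*$ and \cref{thm-parametrisation-finale} (equivalently, the linear-class locus $(\PPic^\dd_{\CCC/\kb})_{X_\Sigma}$, cut out inside $(\PPic_{\CCC/\kb})^{\Sigma(1)}$ by the linear relations of \eqref{exact-sequence-MDS-Pic}; it is nonempty exactly when $\dd$ comes from an element of $\Pic(X_\Sigma)^\vee$, which holds throughout the range $\dd\to\infty$). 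First I would record that, since $\HHom(\PPic_{X_\Sigma/\kb},\PPic_{\CCC/\kb})\cong\PPic_{\CCC/\kb}^{r}$ and $\CCC$ carries a $\kb$-divisor of degree one, each $\PPic^e_{\CCC/\kb}$ is isomorphic to $\PPic^0_{\CCC/\kb}$, so that $S_\dd\cong(\PPic^0_{\CCC/\kb})^{r}$ as a $\kb$-variety; in particular $\dim S_\dd=rg$ and $[S_\dd]=[\PPic^0_{\CCC/\kb}]^{r}$ in $\KVar\kb$, independently of $\dd$.

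Next I would match the normalisations. For a smooth projective toric variety $\omega_{X_\Sigma}^{-1}=\sum_{\alpha\in\Sigma(1)}\mathcal D_\alpha$, so $\dd\scdot\omega_{X_\Sigma}^{-1}=|\dd|$ on $S_\dd$ and the factor $\LL^{-\dd\scdot\omega_{X_\Sigma}^{-1}-n(1-g)}=\LL^{-|\dd|-n(1-g)}$ is the very one occurring in \cref{thm-BMP-C-toric}. Because $\LL_{S_\dd}$ is the pull-back of $\LL_\kb$, there is an $\MMM_\kb$-linear push-forward $p_{\dd!}\colon\MMM_{S_\dd}\to\MMM_\kb$ (``integration over $S_\dd$'') sending $[S_\dd\to S_\dd]\mapsto[S_\dd]$ and $\mathcal F^{m}\MMM_{S_\dd}\to\mathcal F^{m-rg}\MMM_\kb$; applied to $[\HHom_\kb^\dd(\CCC,X_\Sigma)_U\to S_\dd]\LL^{-|\dd|-n(1-g)}$ it returns $[\HHom_\kb^\dd(\CCC,X_\Sigma)_U]_\kb\LL_\kb^{-|\dd|-n(1-g)}$, by the projection formula. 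By \cref{thm-BMP-C-toric} this normalised fibrewise motivic function on $S_\dd$ is the pull-back of the constant $c_\infty=\bigl(\tfrac{\LL^{1-g}}{\LL-1}\bigr)^{r}\prod_{p\in\CCC}(1-\LL)^{r}\tfrac{[X_\Sigma]}{\LL^n}\in\widehat{\MMM_\kb}^{\dim}$ plus an error term lying in $\mathcal F^{m_\dd}\MMM_{S_\dd}$ with $m_\dd=\tfrac14\min_{i}d_i-c$.

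Applying $p_{\dd!}$ term by term, the main term becomes $c_\infty\cdot[S_\dd]=c_\infty\cdot[\PPic^0_{\CCC/\kb}]^{r}$, which is exactly the asserted limit
\[
\left(\frac{[\PPic^0_{\CCC/\kb}]\LL^{1-g}}{\LL-1}\right)^{r}\prod_{p\in\CCC}\left(1-\LL\right)^{r}\frac{[X_\Sigma]}{\LL^n},
\]
while the error lands in $\mathcal F^{m_\dd-rg}\MMM_\kb$, hence has virtual dimension at most $-\tfrac14\min_i d_i+c+rg$, which tends to $-\infty$ as $\dd\to\infty$; this gives convergence in $\widehat{\MMM_\kb}^{\dim}$ and summing over $\dd$ recovers the claimed stabilisation over $\Spec(\kb)$. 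The one point deserving attention is this last book-keeping: integrating over the base converts the \emph{uniform relative} bound of \cref{thm-BMP-C-toric} into an \emph{absolute} bound only after a shift by $\dim S_\dd$, so the argument works precisely because $\dim S_\dd=rg$ is independent of $\dd$ and the uniform gain $-\tfrac14\min_i d_i$ is not destroyed; the remaining ingredients ($\dd\scdot\omega_{X_\Sigma}^{-1}=|\dd|$ and $[S_\dd]=[\PPic^0_{\CCC/\kb}]^{r}$, the latter using the degree-one divisor hypothesis of \cref{question-motivic-BMP-isotrivial}) are routine.
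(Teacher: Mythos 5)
Your proposal is correct and is essentially the paper's own argument: the paper proves the corollary with the single remark ``Summing over $\HHom(\PPic_{X_\Sigma/\kb},\PPic_{\CCC/\kb})$, one gets back the usual expected motivic stabilisation over $\Spec(\kb)$,'' and you have simply carried out that push-forward carefully, correctly accounting for the class $[S_\dd]=[\PPic^0_{\CCC/\kb}]^{\rk\Pic(X_\Sigma)}$ of the relevant component and the dimension shift by $\dim S_\dd=rg$ in the error term.
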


\section{Campana curves on smooth split projective toric varieties}
\label{section-Campana-curves-on-split-toric-varieties}

\subsection{Generalities about Campana curves}

We start with recalling a few definitions, making precise the drawing from the introduction.  

\smallskip 

We call \emph{Campana orbifold}
a pair 
$(V , D )$
where $V$ is a smooth projective 
variety over $F = \kb ( \CCC ) $
and $D$ is an effective Weil 
$\QQ$-divisor 
on $V$
with strict normal crossings 
of the form
\[
D = \sum_{\alpha \in \AAA} \epsilon_\alpha D_\alpha
\]
where the $D_\alpha$'s 
are irreducible divisors on $V$ 
indexed by a finite set $\AAA$
and the rational numbers $\epsilon_\alpha \in [0,1]\cap \QQ$
have shape 
\[
\epsilon_\alpha 
=
1 - \frac{1}{m_\alpha} 
\qquad
m_\alpha \in \NN_{>0} \cup \{ \infty \}
\]
(with convention $\frac{1}{\infty} = 0$).
We assume furthermore that 
$K_V + D$ 
is $\QQ$-Cartier.

A \emph{good model}
$\VVV \to \CCC$ of $V$ being given (\emph{good} generally meaning flat and proper), 
we denote by $\DDD_\alpha$
the closure of $D_\alpha$
in $\VVV$
for all $\alpha \in \AAA$.

Given a closed point $p\in \CCC$,
a section $\sigma : \CCC \to \VVV$
induces an arc 
$\sigma_p : \Spec ( \widehat{\OOO_{\CCC, p}} )
\to \VVV$,
where $\widehat{\OOO_{\CCC, p}}$
is the completion of the local ring of $\CCC$ at $p$.

\begin{mydefintro}
For any section $\sigma : \CCC \to \VVV$
intersecting the complement of the $\DDD_\alpha$'s
and any closed point $p$,
the local intersection degree of $\CCC$ with $\DDD_\alpha$
at $p$
is the non-negative integer
\[
( \sigma , \DDD_\alpha )_p 
\]
given by the multiplicity
of $\sigma_p ^* \DDD_\alpha$. 
\end{mydefintro}

\begin{mydef}
Assume that a finite set $\mathsf S$ of closed points of $\CCC$
is given.
A section $\sigma : \CCC \to \VVV $
is called a \emph{Campana section}
or \emph{curve}
if for every closed point $p$ of $\CCC$ 
outside the finite set $\mathsf S$,
\begin{equation}
\label{def:campana-conditions}
	( 
\sigma , \DDD_\alpha )_p 
\in 
\{ 0 \} \cup \NN_{\geqslant m_\alpha} 
\end{equation}
for every $\alpha \in \AAA$.
\end{mydef}

\subsection{Moduli space of Campana curves}

For the sake of simplicity, from now on we assume that $\mathsf S = \varnothing $ 
and that $m_\alpha < \infty$ for all $\alpha \in \AAA$. 
Recall that sections of the isotrivial model $X \times_\kb \CCC \overset{\pr_2}{\to} \CCC$ correspond to morphisms $\CCC \to X$.

\begin{mylemma}
	The Campana conditions \eqref{def:campana-conditions}
	define a constructible subset of 
	$
	\HHom^\dd  ( \CCC , X ).
	$
\end{mylemma}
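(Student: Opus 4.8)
The plan is to reduce the statement to a constructibility claim about the scheme $\prod_{\alpha}\DDiv^{d_\alpha}_{\CCC/\kb}$ of tuples of effective divisors on $\CCC$, and then to verify that claim by writing down $C$ explicitly.

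First I would observe that the Campana conditions are only imposed on (indeed, only make sense for) sections meeting the complement of the $\DDD_\alpha$'s — equivalently, on morphisms $f\colon\CCC\to X$ whose image meets $U$ — so the Campana locus is contained in the open subscheme $\HHom^\dd(\CCC,X)_U\subset\HHom^\dd(\CCC,X)$, openness being the upper semicontinuity of the fibre dimension of $\mathrm{ev}^{-1}(X\setminus U)\to\HHom^\dd(\CCC,X)$, where $\mathrm{ev}$ is the evaluation morphism and $\CCC$ is proper. Since an open subscheme is constructible, it is enough to show that the Campana conditions cut out a constructible subset of $\HHom^\dd(\CCC,X)_U$. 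For a morphism $f$ in this scheme, the corresponding section $\sigma$ of the isotrivial model $X\times_\kb\CCC\to\CCC$ has, at a closed point $p$, local intersection degree $(\sigma,\DDD_\alpha)_p$ with $\DDD_\alpha=\mathcal D_\alpha\times_\kb\CCC$ equal to the multiplicity of $p$ in the effective divisor $f^*\mathcal D_\alpha$: this is essentially the definition, once one unwinds that $\sigma_p^*\DDD_\alpha$ is cut out by the pullback along $f$ of a local equation of $\mathcal D_\alpha$. Hence the Campana conditions on $f$ depend only on the image of $f$ under the morphism
\[
h\colon\HHom^\dd(\CCC,X)_U\longrightarrow\prod_{\alpha}\DDiv^{d_\alpha}_{\CCC/\kb}\simeq\prod_{\alpha}\Sym^{d_\alpha}_{/\kb}(\CCC),\qquad f\longmapsto(f^*\mathcal D_\alpha)_\alpha,
\]
and the Campana locus is exactly $h^{-1}(C)$, where $C\subset\prod_{\alpha}\DDiv^{d_\alpha}_{\CCC/\kb}$ is the set of tuples $(E_\alpha)_\alpha$ such that every nonzero local multiplicity $\ord_p(E_\alpha)$ is $\geqslant m_\alpha$. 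As the preimage of a constructible set under a morphism of schemes is constructible, it remains to prove that $C$ is constructible.

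For this I would note that $C=\prod_{\alpha}C_\alpha$, where $C_\alpha\subset\Sym^{d_\alpha}_{/\kb}(\CCC)$ consists of the effective divisors of degree $d_\alpha$ whose nonzero multiplicities are all $\geqslant m_\alpha$, and I would argue that each $C_\alpha$ is in fact closed — so that $C$ is closed, a fortiori constructible, whence $h^{-1}(C)$ is a locally closed, hence constructible, subset of $\HHom^\dd(\CCC,X)$. To see that $C_\alpha$ is closed, abbreviate $m=m_\alpha$, $d=d_\alpha$: there are only finitely many partitions $\lambda=(n_1\geqslant\cdots\geqslant n_k)$ of $d$ all of whose parts are $\geqslant m$ (since $k\leqslant d/m$), and for each of them the morphism of projective $\kb$-varieties
\[
\varphi_\lambda\colon\CCC^{k}\longrightarrow\CCC^{d}\longrightarrow\Sym^{d}_{/\kb}(\CCC),\qquad(x_1,\dots,x_k)\longmapsto\sum_{i=1}^{k}n_i\,x_i
\]
— the first arrow repeating $x_i$ exactly $n_i$ times, the second the quotient by $\mathfrak S_d$ — is proper, so $\im(\varphi_\lambda)$ is closed. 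One then checks the two inclusions of $C_\alpha=\bigcup_\lambda\im(\varphi_\lambda)$: a divisor $\sum_j c_j p_j$ with the $p_j$ pairwise distinct and all $c_j\geqslant m$ is $\varphi_\lambda$ of $(p_j)_j$ for $\lambda$ the partition formed by the $c_j$, while any point $\sum_i n_i x_i$ of some $\im(\varphi_\lambda)$ has each of its nonzero multiplicities equal to a sum of parts $n_i\geqslant m$. A finite union of closed sets being closed, $C_\alpha$ is closed, and the lemma follows.

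The argument is soft; the only points demanding a little care — rather than being genuine obstacles — are the identification of the local intersection degree $(\sigma,\DDD_\alpha)_p$ with the multiplicity $\ord_p(f^*\mathcal D_\alpha)$ for the isotrivial model (which is what makes the Campana conditions factor through $h$) and the openness of $\HHom^\dd(\CCC,X)_U$ inside $\HHom^\dd(\CCC,X)$; both are standard. If one would rather not invoke properness of the $\varphi_\lambda$, Chevalley's theorem already yields that each $\im(\varphi_\lambda)$, hence $C$, is constructible, which is all that is needed.
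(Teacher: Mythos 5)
Your argument is correct, and its skeleton is the same as the paper's: everything is pushed down along the map $\HHom^\dd_\kb(\CCC,X)\to\Sym^\dd_{/\kb}(\CCC)$, $f\mapsto(f^*\mathcal D_\alpha)_\alpha$, and the problem is reduced to the constructibility of the multiplicity stratum $C\subset\Sym^\dd_{/\kb}(\CCC)$. Where you diverge is in how that last point is justified. The paper invokes the motivic Euler product machinery of \cite{bilu2023MAMS}: the degree-$\dd$ coefficient of $\prod_{p\in\CCC}\bigl(1+\sum_{\ee\geqslant\mm}\TT^\ee\bigr)$ is by construction a class supported on explicit locally closed pieces of $\Sym^\dd_{/\kb}(\CCC)$ (indexed by generalised partitions with parts $\geqslant\mm$), and $C$ is their union; the lemma is then immediate from the way these coefficients are built. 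You instead prove the constructibility of $C$ from scratch, showing it is in fact \emph{closed} as the finite union of images of the proper maps $\varphi_\lambda\colon\CCC^k\to\Sym^d_{/\kb}(\CCC)$ over partitions $\lambda$ of $d$ with all parts $\geqslant m$ — note that your strata $\im(\varphi_\lambda)$ are exactly the closures of the $\Sym^\mu$-pieces underlying the Euler product coefficients, so the two descriptions of $C$ coincide. Your route is more elementary and self-contained (and yields the slightly stronger conclusion that the multiplicity condition is closed on the divisor side, hence the Campana locus is locally closed in $\HHom^\dd_\kb(\CCC,X)$, not merely constructible); the paper's route is shorter given that the Euler-product formalism is needed anyway for the rest of \cref{section-Campana-curves-on-split-toric-varieties}, and it hands the author the decomposition of $C$ that is actually used in the Möbius inversion later. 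Your preliminary reductions (openness of $\HHom^\dd_\kb(\CCC,X)_U$, identification of $(\sigma,\DDD_\alpha)_p$ with $\ord_p(f^*\mathcal D_\alpha)$ for the isotrivial model) are correct and are points the paper leaves implicit.
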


\begin{proof}
	It is shown 
	in \cite{bilu2023MAMS}
	that 
	\[
	\HHom^\dd_\kb  ( \CCC , X ) \rightarrow \Sym_\kb^\dd ( \CCC )  
	\]
	defines a map of constructible sets. 
	The image is in fact contained in the Hilbert scheme of 
	zero-dimensional subschemes of $\CCC$
    having length $\dd$. The condition on the multiplicity 
    is encoded by the varieties defining the coefficients of the motivic Euler product 
    \[
    \prod_{
p \in \CCC } 
\left (  
 1 +
 \sum_{\ee \geqslant \mm} \TT^\ee 
\right )
\] 
and our moduli space of Campana curves of degree $\dd$ 
is obtained by pulling back to the coefficient of degree $\dd$. 
\end{proof}

\subsection{Möbius functions for Campana curves}

In the Campana case,
the local valuation monoïd 
$\NN^\mathfrak I$
is replaced by 
\[
\NN^\mathfrak I_{\geqslant \mm} \cup \{ \bm{0} \} 
\]
and $A ( B_\Sigma ) $ is replaced by 
\[
A ( B_\Sigma )_\mm
= 
A ( B_\Sigma ) \cap
\left ( 
\NN^\mathfrak I_{\geqslant \mm}  
\cup \{ \bm{0} \} \right ) . 
\]
The new local Möbius function 
\[
\mu_{B_\Sigma,\mm} :
\NN^{\mathfrak I } \to \ZZ 
\]
is given by the relation 
\[
\mathbf 1_{A ( B_\Sigma )_\mm } ( \nn )
=
\sum_{0 \leqslant \nn ' \leqslant \nn} \mu_{B_\Sigma,\mm} ( \nn ' ) 
\bm 1_{ \NN^\mathfrak I_{\geqslant \mm}  
\cup \{ \bm{0} \}} ( \nn - \nn ' ) 
\]
and the global one is given by the coefficients of the motivic Euler product
\[
\prod_{
p \in \CCC } 
\left ( 
P_{B_\Sigma}^\epsilon ( \TT )
\right ) 
\]
where 
\[
P_{B_\Sigma}^\epsilon ( \TT )
= \sum_{\mm \in \NN^\mathfrak J} \mu_{B_\Sigma,\mm} ( \mm ) \TT^\mm. 
\]

\begin{myptn}
\label{proposition-convergence-of-motivic-mobius-function-campana}
There exists an $\eta > 0$ 
such that the formal series
	\[
\prod_{
p \in \CCC 
} 
\left ( 
\prod_{ 
i \in \mathfrak I   
}
( 1 - t_i^{m_i} )
\sum_{\nn \in A ( B_\Sigma )_\mm} \TT^\nn 
\right ) 
\]
converges for $|t_i|  < \LL_\kb^{-1/m_i + \eta} = \LL_\kb^{-( 1 - \epsilon_i ) + \eta }$,
with value 
at $t_i = \LL_\kb^{-1/m_i }$
equal to the motivic Euler product with local factor equal to
\[
(1-\LL_p)^{\rk ( \Pic ( X_\Sigma ))} 
	\int_{
	\substack{
	\LLL_\infty ( X_\Sigma ) \\ \ord_{\mathcal D_\alpha} \in \NN_{\geqslant m_\alpha} \cup \{ 0 \}
	}
	} 
	\mathrm d \mu_{( X_\Sigma , D_{\bm \epsilon} )}
	=
	1 
	+ 
	\sum_{\nn \in A ( B_\Sigma )_{\mm} \setminus \{ \bm 0 \}}
	\LL_p^{- | \frac{\nn}{\mm} | }
	\prod_{i\in \mathfrak I} ( 1 - \LL_p^{-1} ) 
\]
and the $\ee$-th 
error term 
having virtual dimension at most 
\[
- \frac{1}{4}
	\min_i 
	\left ( 
	(e_i + 1 ) ( 1 - \epsilon_i )	
	\right ) .
\]
\end{myptn}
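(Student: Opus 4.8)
The plan is to follow, step for step, the argument given above for the classical Möbius series $\sum_{\dd}\mu_\Sigma(\dd)\LL_\kb^{-|\dd|}$, the two structural changes being that the total degree $\dd\mapsto|\dd|$ is replaced by the weighted degree $\nn\mapsto|\nn/\mm|=\sum_{i\in\mathfrak I}n_i/m_i$ (so that the whole computation lives in $\widehat{\MMM_{\kb,r}}^{\dim}$ for $r$ a common multiple of the $m_i$) and that the local valuation monoid $\NN^{\mathfrak I}$ is replaced by the Campana monoid $\Lambda_{\mm}:=\NN^{\mathfrak I}_{\geqslant\mm}\cup\{\bm 0\}=\prod_{i\in\mathfrak I}\bigl(\{0\}\cup\NN_{\geqslant m_i}\bigr)$, which one checks at once to be a submonoid of $\NN^{\mathfrak I}$. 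For a complete regular fan one has $A(B_\Sigma)_{\mm}=\{\nn\in\Lambda_{\mm}:\Supp(\nn)\subseteq\sigma(1)\text{ for some }\sigma\in\Sigma\}$. By the multiplicativity of motivic Euler products (\cref{ptn-Eul-prod-mult}), the series in the statement is a genuine motivic Euler product over $\CCC$ with constant local factor
\[
L(\TT)=\prod_{i\in\mathfrak I}(1-t_i^{m_i})\sum_{\nn\in A(B_\Sigma)_{\mm}}\TT^{\nn},
\]
so all three assertions reduce to properties of $L(\TT)$ and to the compatibility of motivic Euler products with evaluation \cite{bilu2023MAMS}.

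The combinatorial heart — the Campana analogue of the valuation estimate \cite[Lemme~3.8]{bourqui2009produit} — is the claim that every monomial occurring in $L(\TT)-1$ has weighted degree at least $1+1/m_{\max}$, with $m_{\max}=\max_i m_i$. If the monomial is supported on two distinct rays $i\neq j$, the occurring exponents are $\geqslant m_i$ and $\geqslant m_j$, so its weighted degree is $\geqslant 2$. If it is supported on a single ray $\{i\}$ — which is always admissible, $\{i\}$ being $\sigma(1)$ for the corresponding one-dimensional cone — then the relevant partial generating function is
\[
(1-t_i^{m_i})\Bigl(1+\sum_{n\geqslant m_i}t_i^{n}\Bigr)=1+t_i^{m_i+1}+\dots+t_i^{2m_i-1},
\]
so after cancellation its lowest non-constant term has $t_i$-degree $m_i+1$, i.e. weighted degree $1+1/m_i$. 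Running the argument behind \cref{lemma:convergence-criterion-Z-coeff-multivariable} with this weighted grading, the coefficient $\mathfrak a_{\ee}$ of $\TT^{\ee}$ in $\prod_{p\in\CCC}L(\TT)$ (pushed to $\Spec\kb$) is a finite sum of relative symmetric products indexed by the ways of writing $\ee$ as a sum, with multiplicity, of monomials occurring in $L(\TT)-1$; since $\dim\CCC=1$, each such symmetric product has virtual dimension equal to its number of summands, whence $\dim\mathfrak a_{\ee}\leqslant|\ee/\mm|/(1+1/m_{\max})$ because each summand has weighted degree $\geqslant 1+1/m_{\max}$. Comparing this with $\langle\alpha,\ee\rangle=|\ee/\mm|-\eta\,|\ee|\geqslant(1-\eta m_{\max})|\ee/\mm|$ at $\alpha_i=1/m_i-\eta$, the Euler product converges absolutely for $|t_i|<\LL_\kb^{-(1-\epsilon_i)+\eta}$ as soon as $0<\eta<\bigl(m_{\max}(m_{\max}+1)\bigr)^{-1}$. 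To reach the sharp estimate on the error term one refines this count: one isolates in $\mathfrak a_{\ee}$ the contribution of the summands of minimal weighted degree and bounds the dimension of the remainder exactly as in the proof of \cref{thm-BMP-C-toric}, using \cite[Lemma~2.21]{faisant2023motivic-distribution}; this yields the announced bound $-\tfrac14\min_i\bigl((e_i+1)(1-\epsilon_i)\bigr)$ for the virtual dimension of the error term.

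Finally, to identify the value at $t_i=\LL_\kb^{-1/m_i}$ one uses that the specialised series is again a motivic Euler product, whose local factor at $p\in\CCC$ is obtained from $L(\TT)$ by the corresponding specialisation over $\KVar{\kappa(p)}$; the auxiliary factors $\prod_{p\in\CCC}(1-t_i^{m_i})$ are precisely the Kapranov-type corrections turning the combinatorial series $\sum_{\nn\in A(B_\Sigma)_{\mm}}\LL_p^{-|\nn/\mm|}$ into the stated local density. It then remains to recognise that density as $(1-\LL_p)^{\rk(\Pic(X_\Sigma))}$ times the naive motivic integral $\int_{\LLL_\infty(X_\Sigma),\ \ord_{\mathcal D_\alpha}\in\NN_{\geqslant m_\alpha}\cup\{0\}}\mathrm d\mu_{(X_\Sigma,D_{\bm\epsilon})}$, which I would do through the torus-orbit stratification of $X_\Sigma$: an arc centred in the orbit attached to a cone $\sigma$ has prescribed vanishing order $n_\alpha\geqslant 0$ along $\mathcal D_\alpha$ for $\alpha\in\sigma(1)$ and order $0$ along the remaining boundary divisors, the locus with a fixed contact vector $\nn$ has measure $\LL_p^{-|\nn/\mm|}\prod_{i\in\mathfrak I}(1-\LL_p^{-1})$, and the Campana constraint $\ord_{\mathcal D_\alpha}\in\NN_{\geqslant m_\alpha}\cup\{0\}$ annihilates every contact vector outside $\Lambda_{\mm}$. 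This last identification is the main obstacle: it forces one to keep precise track of how the factors $\prod_{p\in\CCC}(1-t_i^{m_i})$ interact with the Euler product after specialisation, and to verify that the weighted toric arc-volume computation — a Campana variant of the standard toric one, compare \cite[Chap.~4]{faisant2023thesis} — reproduces exactly $\sum_{\nn\in A(B_\Sigma)_{\mm}}\LL_p^{-|\nn/\mm|}$ with no spurious correction factor.
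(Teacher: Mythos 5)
Your convergence argument is correct and, in one respect, more careful than the paper's own. The paper proves convergence by splitting the local factor as $P_{B_\Sigma}^{\epsilon}(\TT)\times\bigl(\prod_{i}(1-t_i^{m_i})\sum_{\nn\in\NN^{\mathfrak I}_{\geqslant\mm}\cup\{\bm 0\}}\TT^{\nn}\bigr)$ and claiming that, after the cancellation of the two copies of $\sum_i t_i^{m_i}$, every surviving non-constant monomial of the second factor has at least two non-trivial coordinates; it then applies \cref{lemma:convergence-criterion-Z-coeff-multivariable} to each factor in the variables $u_i=t_i^{m_i}$ and invokes \cite[Lemma 3.4]{faisant2023geometric-BMP-Ga} for the product, which is where the constant $\tfrac14=\tfrac12\times\tfrac12$ comes from. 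Your single-ray computation $(1-t_i^{m_i})(1+\sum_{n\geqslant m_i}t_i^{n})=1+t_i^{m_i+1}+\dots+t_i^{2m_i-1}$ shows that the paper's claim fails as soon as some $m_i\geqslant 2$: monomials supported on a single ray do survive, with weighted degree only $1+1/m_i$. Your direct treatment of $L(\TT)$, with the lower bound $1+1/m_{\max}$ on the weighted degree of every non-constant monomial, correctly yields convergence for $|t_i|<\LL_\kb^{-1/m_i+\eta}$ with $\eta<\bigl(m_{\max}(m_{\max}+1)\bigr)^{-1}$, and your identification of the value at $t_i=\LL_\kb^{-1/m_i}$ is sketched at essentially the same level of detail as the paper's one-line reference to \cite[Remark 5.2]{faisant2023motivic-distribution}.

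The genuine gap is the error-term estimate, which you do not prove: you assert that isolating the summands of minimal weighted degree and arguing "as in the proof of \cref{thm-BMP-C-toric}, using \cite[Lemma 2.21]{faisant2023motivic-distribution}" yields $-\tfrac14\min_i\bigl((e_i+1)(1-\epsilon_i)\bigr)$, but no such refinement is carried out, and it is not clear that this route can produce the constant $\tfrac14$ precisely because of the single-ray monomials you correctly retain. Your own dimension count gives, for a tail term $\mathfrak a_{\nn}\LL_\kb^{-|\nn/\mm|}$ with $n_{i_0}>e_{i_0}$, only
\[
\dim\bigl(\mathfrak a_{\nn}\LL_\kb^{-|\nn/\mm|}\bigr)\;\leqslant\;-\frac{|\nn/\mm|}{m_{\max}+1}\;\leqslant\;-\frac{1}{m_{\max}+1}\,(e_{i_0}+1)(1-\epsilon_{i_0}),
\]
which is strictly weaker than the stated bound once $m_{\max}\geqslant 4$; and this is essentially sharp, since the coefficient of $t_{i_0}^{m_{i_0}+1}$ in the Euler product is $[\CCC]$, whose specialised class has virtual dimension exactly $-1/m_{i_0}$. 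So either the constant $\tfrac14$ must be replaced by one depending on $\mm$ (which would still suffice for \cref{thm-Campana-final}, as the bound still tends to $-\infty$ with $\min_i(e_i+1)(1-\epsilon_i)$), or a different argument is required; in either case your proposal leaves this assertion unproved.
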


\begin{proof}Since
\[
\prod_{ 
i \in \mathfrak I   
}
( 1 - t_i^{m_i} )
\sum_{\nn \in A ( B_\Sigma )_\mm} \TT^\nn 
=
\left ( \prod_{ 
i \in \mathfrak I   
}
( 1 - t_i^{m_i} )
\right ) 
\times 
P_{B_\Sigma}^\epsilon ( \TT ) 
\times 
\sum_{\mm \in \NN^\mathfrak I_{\geqslant \mm}  
\cup \{ \bm{0} \}} \TT^\mm 
\]
	we have to control the terms 
	coming from $P_{B_\Sigma}^\epsilon ( \TT ) $
	on one hand, 
	and the ones coming from 
	$\sum_{\nn \in \NN^\mathfrak I  }
	\bm 1_{ \NN^\mathfrak I_{\geqslant \mm}  
\cup \{ \bm{0} \}} ( \nn ) \TT^\nn $
	on the other hand. 
	
	Concerning $P_{B_\Sigma}^\epsilon ( \TT ) $, 
	it is enough to remark that 
	if $\mu_{B_\Sigma}^\varepsilon ( \ee ) \neq 0$,
	then at least two coordinates of $\ee$ are non-zero.
	Moreover, if $i$ is such a coordinate, then $e_i \geqslant m_i$. 
	Now we can apply \cref{lemma:convergence-criterion-Z-coeff-multivariable}
	as follows.
	We see $P_{B_\Sigma}^\epsilon ( \TT )$
	as a power series 
	$P ( \uu )$
	in a new set of indeterminates 
	$(u_i )_{i \in \mathfrak I}$
	given by $u_i = t_i^{m_i}$,
	that is to say
	$ t_i =  u_i ^{1/m_i}$.
	This gives that the $\ee$-th coefficient 
	($\ee \in \NN^\mathfrak I$)
	of 
	\[
	\prod_{p \in \CCC} P_{B_\Sigma}^\epsilon ( \TT )
	\]
	has dimension bounded by 
	\[
	\frac{1}{2}
	\sum_{i \in \mathfrak I} \frac{e_i}{m_i}
	= 
	\frac{1}{2}
	| ( \bm 1 - \bm  \epsilon ) \ee |
	\]
	(where the product of tuples of numbers is taken coordinate-wise).
	In particular, it converges at $t_i = \LL^{-1/m_i} = \LL^{-( 1 - \epsilon_i )}$, 
	with error term of virtual dimension bounded by
	\[
	- \frac{1}{2}
	\min_i 
	\left ( 
	\frac{e_i + 1 }{m_i}
	\right ) 
	=
	- \frac{1}{2}
	\min_i 
	\left ( 
	(e_i + 1 ) ( 1 - \epsilon_i )	
	\right ) 
	.
	\]
	
	Now one writes
	\begin{align*}
	\left ( 
\sum_{\nn \in \NN^\mathfrak I  }
\mathbf 1_{\NN^\mathfrak I_{\geqslant \mm} \cup \{ \bm{0} \}} ( \nn ) \TT^\nn 
\right )  
& = 
1 
+
\sum_{\substack{A\subset \mathfrak I \\ A \neq \varnothing }}
\sum_{\substack{\nn \in \NN^A \\ n_i \geqslant m_i }}
\TT^\nn   \\
& = 
1 
+
\sum_{i \in \mathfrak I} 
\sum_{n \geqslant m_i} 
t_i^n 
+ 
\sum_{\substack{A\subset \mathfrak I \\ | A | \geqslant 2 }}
\sum_{\substack{\nn \in \NN^A \\ n_i \geqslant m_i }}
\TT^\nn \\
& = 
1 
+
\underbrace{\sum_{i \in \mathfrak I} 
t_i^{m_i}}_{(*)}
+
\sum_{i \in \mathfrak I} 
\sum_{n > m_i} 
t_i^n 
+ 
\sum_{\substack{A\subset \mathfrak I \\ | A | \geqslant 2 }}
\sum_{\substack{\nn \in \NN^A \\ n_i \geqslant m_i }}
\TT^\nn
	\end{align*}
as well as 
\begin{align*}
	\prod_{i \in \mathfrak I} 
( 1 - t_i^{m_\alpha} )
= 
1 
- 
\underbrace{
\sum_{i \in \mathfrak I}
t_\alpha^{m_\alpha} }_{(*)}
+ 
\sum_{\substack{A\subset \mathfrak I \\ | A | \geqslant 2 }}
( - 1 )^{|A|} \prod_{\alpha \in A} t_i^{m_i} 
\end{align*}
so that it becomes clear that 
if the $\nn$-th coefficient ($\nn \neq \mathbf 0$)
of 
the product
\begin{align*}
	\left ( 
\sum_{\nn \in \NN^\mathfrak I  }
\mathbf 1_\epsilon ( \nn ) \TT^\nn 
\right ) 
\prod_{i \in \mathfrak I} 
( 1 - t_\alpha^{m_\alpha} ) 
\end{align*}
is non-zero, 
then at least two coordinates of $\nn$ are non-trivial and verifies $n_\alpha \geqslant m_\alpha$. 
Indeed, the two occurences of $(*)$ compensate when one expands the product. 
We apply \cref{lemma:convergence-criterion-Z-coeff-multivariable} once again 
in the same way we did for $P^\epsilon_{B_\Sigma} ( \TT )$.
The factor $1/4 = 1/2 \times 1/2$ comes from the fact that we work 
with a product of two convergent series,
both having coefficients of dimension bounded by 
\[
\frac{1}{2}
	| ( \bm 1 - \bm \epsilon )  \ee |
\]
see \cite[Lemma 3.4]{faisant2023geometric-BMP-Ga} (replace the weight by the dimension and take $\bm \rho =  \bm 1 - \bm  \epsilon $).

The computation of the local factor is a straightforward 
adaptation of \cite[Remark 5.2.]{faisant2023motivic-distribution}.
\end{proof}

\subsection{Restricting our decomposition to Campana curves}

For all $\dd \in \NN^\mathfrak I$,
one replaces $\Sym^\dd_\kb ( \CCC )_\Sigma
 $
by 
its subscheme
\[
\Sym^\dd_\kb ( \CCC )_{\Sigma , \mm} 
\]
consisting 
of all the $\mathfrak I$-tuples of effective divisors
on $\CCC$ of degree $\dd\in \NN^\mathfrak I$
satisfying the additional condition 
\[
D_i =
\sum_v k_{i,v} [v]
\quad k_{i,v} \in \{ 0 \} \cup \mathbf N_{\geqslant m_i}  
\quad \forall \, v 
\quad 
\forall i \in \mathfrak I . 
\]
We define $(\DDiv_{\CCC / \kb} )^\mathfrak I_{\bm m}$ similarly.

\begin{mythm}
\label{thm-Campana-final}
Let $X_\Sigma$ be a smooth split and projective toric variety over $\kb$
and $\CCC$ be a smooth projective curve over $\kb$ admitting a $\kb$-divisor of degree one. 
Then 

\label{thm-general-campana}
	\begin{align*}
	& \left [
	\HHom_\kb^\mdeg (  \CCC  , ( X_\Sigma , \mathcal D_\epsilon )  )_U
	\right ]
	\LL_{\kb}^{- \mdeg \scdot \omega_V^{-1} ( \mathcal D_\epsilon ) - n(1-g)} \\
	& \longrightarrow 
    \left ( \frac{ [\Pic^0 ( \CCC ) ] \LL^{1-g} }{ \LL_\kb - 1 } \right )^{\rk ( \Pic ( V ))} 	
    \prod_{p\in \CCC } 
	( 1 - \LL_p^{-1} )^r 
	\int_{\LLL_\infty ( X_\Sigma )} \mathbf 1_\epsilon \mathrm d \mu_{X_\Sigma}
	\\
	\end{align*}
	as
	$d ( \mdeg , \partial \CEff ( V )^\vee ) \to \infty $,
	for the dimensional filtration. 
\end{mythm}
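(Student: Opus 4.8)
The plan is to combine the parametrisation of curves on a Mori Dream Space (\cref{thm-parametrisation-finale}) with the restriction to Campana-type tangency conditions encoded by $(\DDiv_{\CCC / \kb} )^\mathfrak I_{\bm m}$, and then to run essentially the same asymptotic analysis carried out in \cref{section-toric-classic} for \cref{thm-BMP-C-toric}, but with the arithmetic function $\mathbf 1_{B_\Sigma}$ replaced by its Campana version $\mathbf 1_{B_\Sigma,\mm}$ and the normalising exponent $|\dd|$ replaced by $\dd\cdot(\mathbf 1-\bm\epsilon)$. First I would note that, since toric varieties have no Cox relations, \cref{thm-parametrisation-finale} identifies $\HHom_\kb(\CCC,X_\Sigma)_U$ piecewise over $(\PPic_{\CCC/\kb})^\mathfrak I_\Sigma$ with the space $\widetilde X$, whose fibre over a tuple $\underline L=(L_i)_{i\in\mathfrak I}$ of invertible sheaves is a product of punctured $H^0$'s subject to the coprimality conditions of $B_\Sigma$. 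Pulling back along the closed immersion $(\DDiv_{\CCC/\kb})^\mathfrak I_{\bm m}\hookrightarrow(\DDiv_{\CCC/\kb})^\mathfrak I$ cuts out exactly the moduli of Campana curves, by the lemma identifying Campana conditions with a constructible subset cut by the coefficients of $\prod_{p\in\CCC}\bigl(1+\sum_{\ee\geqslant\mm}\TT^\ee\bigr)$.

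Next, I would perform the Möbius inversion with respect to the Campana coprimality function: writing the fibre class over $\underline L$ as a sum over $\bm E,\bm F\in(\DDiv_{\CCC/\kb})^\mathfrak I$ with $\bm E+\bm F$ in the relevant linear systems and both lying in the $\bm m$-stratum, with coefficients $\mu_{B_\Sigma,\mm}(\bm E)$ coming from the global Campana Möbius function of \cref{proposition-convergence-of-motivic-mobius-function-campana}, and with $\prod_{i\in\mathfrak I}\bigl(H^0(\CCC,\OOO_\CCC(F_i))\setminus\{\bm 0\}\bigr)$ as the inner factor. Then I replace each punctured $H^0$ by $\LL^{h^0(\CCC,L_i\otimes\OOO_\CCC(-E_i))}-1$, and split this (exactly as in the display preceding \cref{thm-BMP-C-toric}) into a Riemann--Roch main term $\LL^{\deg L_i-\deg E_i+1-g}$ valid once $\deg E_i$ is not too large, plus an error supported on small $\deg E_i$. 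The $A=\mathfrak I$ term, after normalising by $\LL^{\mdeg\cdot\omega_{X_\Sigma}^{-1}(\mathcal D_\epsilon)+n(1-g)}$, becomes $\LL^{n(1-g)}$ times $\sum_{\bm E\in(\DDiv_{\CCC/\kb})^\mathfrak I_{\bm m}}\mu_{B_\Sigma,\mm}(\bm E)\,\LL^{-\deg(\bm E)}$ — but here is where the Campana normalisation enters: since a Campana divisor of degree $\dd$ has $|\dd/\mm|$ as its natural ``weight'', and the anticanonical-type exponent is $\mdeg\cdot(\mathbf 1-\bm\epsilon)$, one checks that the correct rescaling turns this sum into the motivic Euler product $\prod_{p\in\CCC}\bigl((1-\LL_p)^r\int_{\substack{\LLL_\infty(X_\Sigma)\\ \ord_{\mathcal D_\alpha}\in\NN_{\geqslant m_\alpha}\cup\{0\}}}\mathrm d\mu_{(X_\Sigma,D_{\bm\epsilon})}\bigr)$, whose convergence is precisely \cref{proposition-convergence-of-motivic-mobius-function-campana}.

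The error terms are handled as in \cref{section-toric-classic}: decomposing $\widehat X_{\underline L}$ over subsets $A\subseteq\mathfrak I$ and further over $A'\subseteq A$ according to which $E_i$ have small degree, each contribution with $A\neq\mathfrak I$ (or $A'\neq\varnothing$) acquires a factor $\LL^{h^0(L_i\otimes\OOO_\CCC(-E_i))-\deg L_i+\deg E_i+g-1}-1$ of bounded dimension together with a convergent Campana Möbius sum in the remaining variables, yielding virtual dimension bounded by $-\tfrac14\min_{i\in\mathfrak I}\bigl((\deg L_i+1)(1-\epsilon_i)\bigr)+c$ for an absolute constant $c$; this goes to $-\infty$ as $d(\mdeg,\partial\CEff(V)^\vee)\to\infty$, using \cref{lemma:convergence-criterion-Z-coeff-multivariable} and the convergence estimates of \cref{proposition-convergence-of-motivic-mobius-function-campana} (the factor $1/4$ again coming from a product of two series each with coefficients of dimension at most $\tfrac12|(\mathbf 1-\bm\epsilon)\ee|$). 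Finally I would sum over $\underline L\in(\PPic_{\CCC/\kb})^\mathfrak I_\Sigma$, picking up the Picard-variety factor $\bigl([\PPic^0_{\CCC/\kb}]\LL^{1-g}/(\LL-1)\bigr)^{\rk\Pic(V)}$ exactly as in the corollary to \cref{thm-BMP-C-toric}, to obtain the stated limit in $\widehat{\MMM_{\kb,r}}^{\dim}$. The main obstacle I anticipate is bookkeeping the two competing normalisations — the geometric degree $\dd$ on $(\DDiv_{\CCC/\kb})^\mathfrak I$ versus the Campana weight $\dd\cdot(\mathbf 1-\bm\epsilon)$ — so that the Riemann--Roch main term lines up correctly with the motivic Tamagawa integral over $\ord_{\mathcal D_\alpha}\in\NN_{\geqslant m_\alpha}\cup\{0\}$, and in particular verifying that working in the ramified extension $\MMM_{\kb,r}$ (to make sense of $\LL^{-1/m_\alpha}$) does not disturb the dimensional filtration estimates.
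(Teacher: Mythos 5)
Your setup --- the parametrisation of \cref{thm-parametrisation-finale}, the restriction to the $\bm m$-stratum, and the Möbius inversion with $\mu_{B_\Sigma,\mm}$ --- matches the paper's. The gap is in the asymptotic extraction. After the Campana Möbius inversion, the indicator that survives on $\bm F$ is $\mathbf 1_{\NN^{\mathfrak I}_{\geqslant\mm}\cup\{\bm 0\}}$: the sum over $\bm F$ runs only over the $\bm m$-full stratum $(\DDiv_{\CCC/\kb})^{\mathfrak I}_{\bm m}$ of the linear systems, because the Möbius function removes only the coprimality conditions, not the tangency conditions (and it cannot be made to absorb them, since the discrepancy between $\NN^{\mathfrak I}_{\geqslant\mm}\cup\{\bm 0\}$ and $\NN^{\mathfrak I}$ occurs already in codimension one, so the corresponding Euler product would not converge at the relevant point). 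Consequently you may not replace the inner factor by $\prod_i\bigl(\LL^{h^0(L_i\otimes\OOO_\CCC(-E_i))}-1\bigr)$: that class counts \emph{all} nonzero sections, whereas the relevant locus of sections whose divisor is $\bm m$-full has dimension of order $\sum_i d_i/m_i=|\dd/\mm|$ rather than $|\dd|$. Your Riemann--Roch main term therefore grows like $\LL^{|\dd|}$, and no a posteriori rescaling by $\LL^{-\dd\cdot(\mathbf 1-\bm\epsilon)}$ can repair this; the sentence ``one checks that the correct rescaling turns this sum into the motivic Euler product'' is exactly where the argument breaks, and it is the crux of the theorem rather than a bookkeeping issue.

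The paper's proof replaces the Riemann--Roch step by a generating-series argument carried out globally over $\kb$: the series of classes of $\bm m$-full divisors factors as $\prod_i Z^\Kapr_\CCC(T_i^{m_i})$ times the convergent Euler product $\prod_{v\in\CCC}\bigl(\prod_i(1-T_i^{m_i})\cdot P_{B_\Sigma}^\epsilon(\TT)\cdot\sum_{\nn}\mathbf 1_{\NN^{\mathfrak I}_{\geqslant\mm}\cup\{\bm 0\}}(\nn)\,\TT^{\nn}\bigr)$, and the main term $\LL^{|\dd/\mm|}$ together with the constant (including the factor coming from $[\Pic^0(\CCC)]$) is extracted from the rationality $Z^\Kapr_\CCC(T)=P_\CCC(T)/((1-T)(1-\LL T))$ via a convolution identity, with \cref{proposition-convergence-of-motivic-mobius-function-campana} controlling the error. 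If you wish to keep your route, you need a substitute for Riemann--Roch that computes the class of the $\bm m$-full stratum of a linear system $|L_i\otimes\OOO_\CCC(-E_i)|$ --- which is precisely what the Kapranov zeta function evaluated at $T_i^{m_i}$ supplies.
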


\begin{proof}
One could work relatively to $( \PPic_{\CCC / \kb} )^\mathfrak I$ but to simplify we work directly over $\kb$ and 
apply \cref{proposition-convergence-of-motivic-mobius-function-campana}. 
Given a multidegree $\dd \in \NN^\mathfrak I$,
after a Möbius inversion,
we have to evaluate
\begin{equation}
\sum_{
\substack{
\bm E \in ( \DDiv_{\CCC / \kb} )^\mathfrak I\\
\bm F \in ( \DDiv_{\CCC / \kb} )^\mathfrak I_{\bm m}}
}
\mu_{X,\mm} ( \bm E ) 
\left 
[
\prod_{i\in \mathfrak I }
H^0 ( \CCC , \OOO_\CCC ( F_i ) ) \setminus \{ \bm 0 \}
\right ] 
\end{equation}
and then divide it by $\left [ ( \PPic_{\CCC / \kb}^0 )^n \right ]$.
Up to a power of $[\mathbf G_m]$,
it
is exactly the coefficient of degree $\dd$
of 
\[
\prod_{i\in \mathfrak I} Z_{\CCC}^\Kapr ( T_i^{m_i} )
\times 
\prod_{v \in \CCC}
\left ( 
\prod_{i\in \mathfrak I}
( 1 - T_i^{m_i} ) 
\times 
P_{B_\Sigma}^\epsilon ( \TT ) 
\times 
\sum_{\mm \in \NN^\mathfrak I_{\geqslant \mm}  
\cup \{ \bm{0} \}} \TT^\mm 
\right )
\]
which can be rewritten 
\begin{equation}
\label{equation-Campana-final-expression}
\sum_{\ee + \mm ( \ee ' + \ee '' ) = \dd }
\mathfrak c ( \ee  ) \mathfrak f ( \ee ' ) \LL^{ | \ee '' |}
= 
\sum_{\ee + \mm ( \ee ' + \ee '' ) = \dd } 
\mathfrak c ( \ee ) \mathfrak f ( \ee ' ) \LL^{ - | (\ee + \mm \ee ') /\mm | } \LL^{| \dd / \mm |}
\end{equation}
where the $\mathfrak c ( \ee  )$'s are the coefficient of the right factor, a motivic Euler product 
which is known to be convergent at $T_i = \LL^{-1/m_i}$
by \cref{proposition-convergence-of-motivic-mobius-function-campana},
and the $\mathfrak f ( \ee ' )$'s
are the coefficients of 
\[
\prod_{i\in \mathfrak I}
\frac{ P_{\CCC} ( T_i^{m_i} ) }{ 1 - T_i^{m_i} } 
\]
with $P_{\CCC} ( T ) $ being the polynomial of degree $2g$ such that 
\[
Z_\CCC^\Kapr ( T ) = \frac{P_{\CCC} ( T ) }{( 1 - T )( 1 - \LL T )} ,
\]
see for example Theorem 1.3.1 in \cite[Chap. 7]{chambert-loir-nicaise-sebag2018motivic}.
Each of the terms
involved in \eqref{equation-Campana-final-expression}
 being convergent at $T_i = \mathbf L_i^{1/m_i}$, after normalisation 
of \eqref{equation-Campana-final-expression}
by $ \LL^{| \dd / \mm |}$,
one gets the result. 
\end{proof}

%%%%%%%%%%%%%%%%%%%%%%%%%%%%%%%%%%%%%%%%%%%%%%%%
%%%%%%%%%%%%%%%%%%% SECTION %%%%%%%%%%%%%%%%%%%%
%%%%%%%%%%%%%%%%%%%%%%%%%%%%%%%%%%%%%%%%%%%%%%%%

%\section{Equidistribution of curves on toric varieties}
%\label{section-equidistribution}

%In this section we explain how to extend the main result of \cite[\S 5]{faisant2023motivic-distribution}
%to morphisms from an arbitrary curve $\CCC$ to a projective variety. 

%\begin{center}
%\includegraphics[scale=0.1]{in-progress.jpg}  
%\end{center}

% TODO [X] Écrire cette section plus tard 

%%%%%%%%%%%%%%%%%%%%%%%%%%%%%%%%%%%%%%%%%%%%%%%%
%%%%%%%%%%%%%%%%%%% SECTION %%%%%%%%%%%%%%%%%%%%
%%%%%%%%%%%%%%%%%%%%%%%%%%%%%%%%%%%%%%%%%%%%%%%%

%\section{On intrinsic linear hypersurfaces} 

%\label{section-intrinsic-hypersurfaces}

%%%%%%%%%%%%%%%%%%%%%%%%%%%%%%%%%%%%%%%%%%%%%%%%
%%%%%%%%%%%%%%% BIBLIOGRAPHY %%%%%%%%%%%%%%%%%%%
%%%%%%%%%%%%%%%%%%%%%%%%%%%%%%%%%%%%%%%%%%%%%%%%

\bibliography{references}

%%%%%%%%%%%%%%%%%%%%%%%%%%%%%%%%%%%%%%%%%%%%%%%%
%%%%%%%%%%%%%%%%%%% END DOC %%%%%%%%%%%%%%%%%%%%
%%%%%%%%%%%%%%%%%%%%%%%%%%%%%%%%%%%%%%%%%%%%%%%%

\end{document}